\documentclass[12pt,a4paper]{article}
\usepackage{latexsym,amssymb,amsfonts,amsmath,amsthm,nccmath,float,enumitem,setspace,bm,authblk}
\usepackage[usenames,dvipsnames]{xcolor}
\usepackage[hidelinks]{hyperref}
\usepackage[margin=1.75cm]{geometry}
\usepackage{bm}
\usepackage{booktabs}
\usepackage{verbatim}
\usepackage{diagbox}
\usepackage{makecell}
\usepackage{float}
\allowdisplaybreaks[1]

\hypersetup{
	colorlinks,
	linkcolor={red!80!black},
	citecolor={blue!80!black},
	urlcolor={blue!80!black}
}

\newtheorem{thm}{Theorem}[section]

\newtheorem{pro}[thm]{Proposition}

\newtheorem{defi}[thm]{Definition}

\newtheorem{rem}[thm]{Remark}
\newtheorem{lem}[thm]{Lemma}
\newtheorem{core}[thm]{Corollary}

\newtheorem*{statement}{Statement}

\newcommand{\F}{\mathbb{F}}

\def \leq {\leqslant}
\def \geq {\geqslant}

\def\diag{{\rm diag}}

 \def\GL{{\rm GL}}

\def\PSL{{\rm PSL}}
\def\GL{{\rm GL}} \def\SL{{\rm SL}}

 \def\F{{\mathbb F}}

\let\oldproofname=\proofname
\renewcommand{\proofname}{\rm\bf{\oldproofname}}

\begin{document}
	
	\title{Complete mappings stabilizing Sylow $2$-subgroups in central quotients of special linear groups}
	\author[a]{Shixin Wang}
	\author[b]{Shikang Yu}
	\author[b,c]{Tao Feng \thanks{Supported by NSFC under Grant 12271023}}
	\affil[a]{School of Mathematical Sciences, Laboratory of Mathematics and Complex Systems, MOE, Beijing Normal University, 100875 Beijing, P.R. China}
	\affil[b]{School of Mathematics and Statistics, Beijing Jiaotong University, 100044 Beijing, P.R. China}
    \affil[c]{Hebei Provincial Key Laboratory of Mathematical Theory and Analysis for Network and Data Science, Beijing Jiaotong University, 100044 Beijing, P.R. China}
    \affil[ ]{sxwang@bjtu.edu.cn; healthyu@bjtu.edu.cn; tfeng@bjtu.edu.cn}
    
	\date{}	
	\renewcommand*{\Affilfont}{\small\it}
	\maketitle

\begin{abstract}
A complete mapping of a finite group $G$ is a permutation $\theta$ of $G$ such that $x\mapsto x\theta(x)$ is also a permutation of $G$. Complete mappings have applications in cryptography, particularly in the design of round functions for cryptographic systems. The study of their parity has led to the question of which finite groups admit complete mappings of both parities. A useful approach to this question is to construct complete mappings stabilizing suitable subgroups, in particular Sylow $2$-subgroups. We show that, for every $n\geq2$, every prime power $q$, and every $Z\leq Z(\mathrm{SL}_n(q))$, the group $\mathrm{SL}_n(q)/Z$ admits a complete mapping stabilizing a Sylow $2$-subgroup if and only if $(n,q)\neq(2,2)$.
\end{abstract}

\noindent {\bf Keywords}: Hall--Paige conjecture; complete mapping; orthomorphism; special linear group

\section{Introduction}

Let $G$ be a finite group. A permutation $\theta$ of $G$ is called a \emph{complete mapping} if $x\mapsto x\theta(x)$ is also a permutation of $G$. Equivalently, there exist a set $I$ and three bijections $A,B,C:I\longrightarrow G$ such that $A(i)B(i)=C(i)$ for all $i\in I$. The triple $(A,B,C)$, called a \emph{complete mapping triple} of $G$, defines the complete mapping $\theta=B\circ A^{-1}$, where $\circ$ denotes composition of maps, with the rightmost map applied first. Indeed, given a complete mapping $\theta$, one may take $I=G$, $A=\mathrm{id}_G$, $B=\theta$, and define $C:G\longrightarrow G$ by
$C(g)=g\theta(g)$ for every $g\in G$. Conversely, suppose that $(A,B,C)$ is a triple of bijections $I\longrightarrow G$ satisfying $A(i)B(i)=C(i)$ for all $i\in I$, and let $\theta=B\circ A^{-1}$. For $g\in G$, setting $i=A^{-1}(g)$ gives $g\theta(g)=A(i)(B\circ A^{-1})(g)=A(i)B(i)=C(i)=(C\circ A^{-1})(g)$. Since $C\circ A^{-1}$ is a bijection, the map $g\mapsto g\theta(g)$ is also a bijection. Thus $\theta$ is a complete mapping.

Complete mappings were introduced by Mann \cite{Mann} in order to construct orthogonal Latin squares. Hall and Paige \cite{HP} conjectured that a finite group admits a complete mapping if and only if its Sylow $2$-subgroups are trivial or noncyclic. This conjecture was eventually established through the work of Hall and Paige \cite{HP}, Wilcox \cite{Wilcox}, Evans \cite{Evans09}, and Bray et al. \cite{BCCSZ}.

\begin{thm}\rm{\cite{BCCSZ,Evans09,HP,Wilcox}}\label{thm:HP}
A finite group admits a complete mapping if and only if its	Sylow $2$-subgroups are trivial or noncyclic.
\end{thm}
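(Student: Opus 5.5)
This statement is the Hall--Paige conjecture. Its full proof is a long collaborative effort and cannot be reproduced here, so the plan below outlines the classical route, in the order of the cited works.

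\textbf{Necessity.} Suppose $G$ has a nontrivial cyclic Sylow $2$-subgroup $S$. By Burnside's transfer theorem, $G$ has a normal $2$-complement $N$, and $G/N\cong S$ is cyclic of even order. Suppose $\theta$ is a complete mapping, so $x\mapsto x\theta(x)$ is a bijection. In the abelian group $G/G'$, the product of all elements is then computed in two ways. On one hand it is $\prod_x x$. On the other hand it is $\prod_x x\theta(x)=(\prod_x x)^2$. Hence $\prod_{x\in G} x \in G'$. Now $G/G'$ maps onto the cyclic group $S$ of even order. In a cyclic group of even order, the product of all elements is the unique involution, and each fibre of $G\to S$ has the odd size $|N|$. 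So the image of $\prod_x x$ in $S$ is this involution, which is nontrivial. This contradicts $\prod_x x\in G'$, since $S$ is abelian and so $G'$ maps to the identity of $S$.

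\textbf{Sufficiency, reduction step.} Hall and Paige show two things. First, if $N\trianglelefteq G$ and both $N$ and $G/N$ admit complete mappings, then so does $G$; one builds the mapping on cosets and lifts it. Second, solvable groups, symmetric groups and alternating groups with noncyclic or trivial Sylow $2$-subgroups admit complete mappings. Wilcox refines the reduction. If $H\le G$ contains a Sylow $2$-subgroup of $G$ and $H$ admits a complete mapping, then $G$ does; one uses a transversal argument on the double cosets of $H$. This reduces the conjecture to the finite simple groups, and indeed to showing that each simple group has a suitable subgroup with a complete mapping.

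\textbf{Sufficiency, simple groups.} For groups of Lie type, Wilcox uses a Borel subgroup $B=UT$, which contains a Sylow $2$-subgroup in characteristic $2$, or a suitable parabolic or other subgroup in odd characteristic. The Borel subgroup is solvable, so it is covered by the solvable case. The remaining exceptions are the sporadic groups. Evans handles all of these except $J_4$, by finding suitable subgroups or by computation. Bray et al.\ complete the remaining case $J_4$ by a computer search for a subgroup with the required property.

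\textbf{Main obstacle.} The hardest step is ensuring that every simple group has a subgroup of the required kind. Beyond the generic Lie-type argument, this needs case-by-case analysis and heavy computation, and $J_4$ in particular held out for years.
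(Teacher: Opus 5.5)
Your necessity argument is the standard one of Hall and Paige and is correct. Note that the paper does not prove this theorem; it quotes it from \cite{BCCSZ,Evans09,HP,Wilcox}, so what matters is whether your outline of sufficiency states the ingredients correctly. It does not, at the step you attribute to Wilcox: ``if $H\le G$ contains a Sylow $2$-subgroup of $G$ and admits a complete mapping, then $G$ does.'' This is not what Wilcox proved, and it cannot serve as an intermediate step, because it already contains the whole sufficiency direction. If $P\in\mathrm{Syl}_2(G)$ is noncyclic, then $P$ is solvable and admits a complete mapping by the solvable case of Hall and Paige. Your lemma with $H=P$ would then finish at once, making the reduction to simple groups, the Lie-type analysis and the sporadic computations (including $J_4$) superfluous. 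A transversal argument on $H$-double cosets does not go through under these hypotheses alone; it needs information about how the cosets multiply.

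Your reduction to simple groups is unsupported for the same reason. The normal-subgroup lemma of Hall and Paige requires both $N$ and $G/N$ to admit complete mappings. This fails whenever one of them has a nontrivial cyclic Sylow $2$-subgroup, e.g.\ $G=A_5\times C_2$ with $N=A_5$. Handling such cases needs further arguments in the cited works.

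What Wilcox actually proved is the coset criterion recorded in this paper as Theorem~\ref{thm:wilcox}, together with his lemma on common transversals. One needs an admissible $H$ and bijections $\widetilde x,\widetilde y,\widetilde z\colon I\to G/H$ with $\widetilde z(i)\subseteq\widetilde x(i)\widetilde y(i)$. This holds, for instance, when every $H$-double coset $D$ satisfies $D\subseteq D^2$ (compare Lemma~\ref{lem:single-double-coset}). Whether $H$ contains a Sylow $2$-subgroup plays no role in this criterion. In odd characteristic a Borel subgroup generally does not contain one, yet it can play the role of $H$ provided it is admissible and its cosets can be matched using the Bruhat multiplication rules. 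Verifying such a coset condition for a suitable subgroup of each simple group is the real content of the cited works. Your outline should state this criterion with its actual hypotheses, not the stronger lemma you used.
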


A permutation $\phi$ of $G$ is called an \emph{orthomorphism} if $x\mapsto x^{-1}\phi(x)$ is also a permutation of $G$. For any complete mapping $\theta$ of $G$, the permutation $x\mapsto x\theta(x)$ is an orthomorphism, since $x^{-1}(x\theta(x))=\theta(x)$. We call this orthomorphism the
\emph{orthomorphism associated with} $\theta$. A permutation is called an \emph{odd} (resp. \emph{even}) permutation if it can be written as a product of an odd (resp. even) number of transpositions, where a transposition swaps two distinct elements. A complete mapping or orthomorphism of $G$ is said to be \emph{odd} (resp. \emph{even}) if it is an odd (resp. even) permutation of $G$.

Complete mappings and orthomorphisms have applications in cryptography, particularly in the design of round functions for cryptographic systems; see \cite{Loiss,FOX,Vaudenay} and the references therein. 
Motivated by these applications, Bors and Wang investigated when the permutation groups generated by all complete mappings or by all orthomorphisms of a finite group are ``large'' and primitive \cite[Question~1.1.1]{BW}. This investigation led them to study the parity of complete mappings and orthomorphisms \cite[Subsection~1.1]{BW}. They also observed that a finite group admits orthomorphisms of both parities if and only if it admits complete mappings of both parities. Bors and Wang further proved that complete mappings of both parities exist for every finite group of odd order greater than $3$, every finite solvable group of order greater than $24$ whose Sylow $2$-subgroups are trivial or noncyclic, and every noncyclic finite $2$-group except $C_2^2$, $C_2^3$, and the quaternion group $Q_8$ \cite[Theorem~1.1.6 and Proposition~4.2]{BW}. Here, for positive integers $m$, $C_m$ denotes the cyclic group of order $m$, and $C_2^m$ denotes the direct product of $m$ copies of $C_2$.

For a finite group $G$ and a subgroup $H\leq G$, a complete mapping $\theta$ of $G$ is said to \emph{stabilize} $H$ if $\theta(H)=H$. In this case, the restriction $\theta|_H$ is a complete mapping of $H$. Moreover, $\theta|_H$ can be replaced by any complete mapping of $H$, while leaving $\theta$ unchanged on $G\setminus H$, to obtain another complete mapping of $G$. Consequently, if $H$ admits complete mappings of both parities, then so does $G$ \cite[Lemma~1.1]{YFL}. Building on Wilcox's approach \cite{Wilcox}, Yu et al. \cite{YFL} reduced the problem of determining whether an arbitrary finite group admits complete mappings of both parities to constructing complete mappings stabilizing suitable subgroups, especially Sylow $2$-subgroups, in finite simple groups of Lie type and certain sporadic simple groups. For finite simple groups, we use the notation in \cite{Wilson}.

\begin{thm}\label{cor:main}\rm{\cite[Corollary~1.2]{YFL}}
If every finite simple group of Lie type and every sporadic simple group in $\{\mathrm{Ly}$, $\mathrm{Co}_2$, $\mathrm{B}$, $\mathrm{Th}$, $\mathrm{Fi}_{22}$, $\mathrm{Fi}_{23}$, $\mathrm{Fi}'_{24}$, $\mathrm{Ru}$, $\mathrm{O'N}$, $\mathrm{Co}_1$, $\mathrm{Co}_3$, $\mathrm{HN}$, $\mathrm{M}\}$ admits a complete mapping that stabilizes a Sylow $2$-subgroup, and every Ree group ${}^2G_2(3^{2n+1})$ with $n\geq1$ admits a complete mapping that stabilizes some subgroup of order
$8p_0^{t_0}$ for some odd prime $p_0$ and some integer $t_0\geq1$, then every finite nontrivial group $G$ with a trivial or noncyclic Sylow $2$-subgroup admits complete mappings of both parities, except when $G\cong C_3$, $C_2^2$, $C_2^3$, or $Q_8$.
\end{thm}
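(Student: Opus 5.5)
The plan is to argue by strong induction on $|G|$, following Wilcox's reduction of the Hall--Paige conjecture to simple groups and keeping track of parities along the way. The basic tool is the observation recalled above (\cite[Lemma~1.1]{YFL}): if $\theta$ is a complete mapping of $G$ with $\theta(H)=H$, and $H$ admits complete mappings of both parities, then so does $G$. The ``seed'' families come from Bors and Wang \cite{BW}:
\begin{itemize}
\item groups of odd order greater than $3$;
\item solvable groups of order greater than $24$ with trivial or noncyclic Sylow $2$-subgroups;
\item noncyclic $2$-groups other than $C_2^2$, $C_2^3$ and $Q_8$.
\end{itemize}
All of these already admit complete mappings of both parities. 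The goal is to show that every other admissible $G$ either contains a subgroup $H$ from these families that is stabilized by some complete mapping of $G$, or can be handled through a normal subgroup.

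\textbf{Simple groups.} Let $G$ be a nonabelian simple group and $S$ a Sylow $2$-subgroup. By the hypotheses of the statement, for groups of Lie type and the listed sporadic groups there is a complete mapping stabilizing $S$. For alternating groups and the remaining sporadic groups, I would build one directly. One route is an explicit construction along the lines of \cite{YFL}; another is to exhibit a stabilized subgroup that is solvable of order greater than $24$. If $S\notin\{C_2^2,C_2^3,Q_8\}$, the seed families and \cite[Lemma~1.1]{YFL} finish the case.

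The small exceptional Sylow types must be treated separately:
\begin{itemize}
\item $S\cong C_2^2$: here $G$ is $\PSL_2(q)$ with $q\equiv\pm3\pmod 8$, or $A_5$, or $A_6$-like.
\item $S\cong C_2^3$: here $G$ is $J_1$, ${}^2G_2(3^{2n+1})$, or $\PSL_2(8)$.
\item $S\cong Q_8$: impossible for simple groups, by Brauer--Suzuki.
\end{itemize}
For the Ree groups ${}^2G_2(3^{2n+1})$ with $n\geq1$, the hypothesis gives a stabilized subgroup of order $8p_0^{t_0}>24$. That subgroup is solvable by Burnside's $p^aq^b$ theorem and has noncyclic Sylow $2$-subgroup $C_2^3$, so it has both parities. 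The finitely many remaining small simple groups I would handle by finding a stabilized subgroup of type $C_2^2\rtimes C_3$ enlarged, or by direct computation.

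\textbf{Reduction from general $G$ to simple groups.} Take a minimal normal subgroup $N$ of $G$, or more generally a suitable normal subgroup, and write $G/N$ for the quotient. I would use the Wilcox-type lifting lemma: if $N$ and $G/N$ both admit complete mappings, then complete mappings of $G$ can be assembled coset by coset. In this assembly one may vary the mapping on a single coset, namely on $N$ itself, independently of the rest. Consequently, if $N$ admits both parities, then so does $G$. Otherwise, the variation in parity must come from the quotient, and I would lift a parity change in $G/N$; this multiplies the parity by a factor depending on $|N|$, so some care is needed when $|N|$ is even. If $N=T^k$ is a product of simple groups, I would use the complete mappings stabilizing the relevant subgroup in each factor and take products. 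When Sylow $2$-subgroups of $N$ or $G/N$ are cyclic, Wilcox's arguments replace $N$ by a different normal series, for instance using $O(G)$ or a subgroup of index $2$.

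\textbf{Main obstacle.} The hardest step is the parity bookkeeping in this extension step when neither $N$ nor $G/N$ individually admits both parities. This happens exactly when the pieces are among $C_3$, $C_2^2$, $C_2^3$ and $Q_8$, or are small simple groups. It leads to a finite list of small groups, such as extensions of $C_2^2$ or $Q_8$ by $C_3$, or groups of order at most $24$, which must be checked directly. Those checks also produce the four exceptions $C_3$, $C_2^2$, $C_2^3$ and $Q_8$ in the statement.
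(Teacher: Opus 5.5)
The paper does not prove this statement; it is quoted from \cite{YFL}. Judged on its own terms, your sketch has genuine gaps, both at the level of simple groups and in the extension step.

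First, the base cases. You say that after invoking the hypothesis only ``finitely many remaining small simple groups'' are left. But the simple groups with Sylow $2$-subgroup $C_2^2$ include the infinite family $\PSL_2(q)$, $q\equiv\pm3\pmod 8$. For these groups the hypothesis gives no parity information: it only yields a complete mapping stabilizing a copy of $C_2^2$, which has complete mappings of a single parity, so \cite[Lemma~1.1]{YFL} gives nothing. You need an actual construction stabilizing a larger subgroup that has both parities. One example is the torus normalizer $N$ of Lemma~\ref{lem:rank-one-normalizer}, a dihedral group of order $q-1$ or $q+1$. The arguments of Propositions~\ref{prop:n2-q3mod4} and~\ref{pro:SL2-one-mod-four} extend an \emph{arbitrary} complete mapping of $N$, which settles all but finitely many of these groups once $|N|>24$. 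The same problem arises for $\PSL_2(8)$ and $J_1$ (Sylow $C_2^3$). The alternating groups and the sporadic groups outside the list are deferred to ``I would build one directly''; that is a large missing piece, not a routine check. Two smaller slips: $A_6\cong\PSL_2(9)$ has Sylow $2$-subgroup $D_8$, not $C_2^2$; and $8p_0^{t_0}$ may equal $24$, which is outside the range of the solvable result of \cite{BW}.

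Second, the reduction. Your inductive invariant is only ``admits both parities''. Both of your transfer mechanisms assemble $\theta$ coset by coset from complete mappings of $N$ and $G/N$, so both must exist; the parity-lifting mechanism additionally needs $|N|$ odd. In several infinite families neither holds, and there is no other normal series to switch to.
\begin{itemize}
\item If $[G:N]=2$, counting the coset patterns $(xN,\theta(x)N,x\theta(x)N)$ shows that every complete mapping $\theta$ of $G$ sends exactly $|N|/2$ elements of $N$ into $N$. So nothing can be varied ``on $N$ itself''. This is the situation for $\PSL_2(q)\trianglelefteq\mathrm{PGL}_2(q)$ and for $C_2\times\PSL_2(q)$.
\item For $\SL_2(q)$ with $q\equiv\pm3\pmod 8$, $q\geq5$, the only proper nontrivial normal subgroup is $Z\cong C_2$, which admits no complete mapping.
\end{itemize}
These groups have noncyclic Sylow $2$-subgroups ($D_8$, $C_2^3$, $Q_8$), so the theorem asserts both parities for them. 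Your sketch covers none of them: Brauer--Suzuki only rules out $Q_8$ for \emph{simple} groups. So your claim that the bad cases reduce to a finite list of small groups is false.

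What is missing is twofold:
\begin{itemize}
\item An invariant that survives extensions, typically a complete mapping stabilizing a Sylow $2$-subgroup, transported as in Lemma~\ref{lem:normal-extension}, with parity invoked only once at the end via \cite[Lemma~1.1]{YFL}. This must come with a real argument for quotients whose Sylow $2$-subgroups are cyclic and nontrivial.
\item A separate treatment of all nonsolvable groups whose Sylow $2$-subgroup is $C_2^2$, $C_2^3$ or $Q_8$, where some larger subgroup with both parities must be stabilized. This is exactly why the Ree-group hypothesis is phrased via subgroups of order $8p_0^{t_0}$.
\end{itemize}
Your ``main obstacle'' paragraph names these difficulties but gives no argument for them.
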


In this paper, we study complete mappings stabilizing Sylow $2$-subgroups in central quotients of special linear groups, namely groups of the form $\SL_n(q)/Z$, where $Z\leq Z(\SL_n(q))$. This includes $\SL_n(q)$ itself when $Z=1$ and $\PSL_n(q)$ when $Z=Z(\SL_n(q))$. In particular, our results establish the existence of complete mappings stabilizing Sylow $2$-subgroups for every simple projective special linear group, a classical family of finite simple groups of Lie type. Our main result is the following.

\begin{thm}\label{thm:main}
Let $n\geq2$, let $q$ be a prime power, and let $Z\leq Z(\SL_n(q))$.
Then $\SL_n(q)/Z$ admits a complete mapping stabilizing a Sylow $2$-subgroup if and only if $(n,q)\neq(2,2)$.
\end{thm}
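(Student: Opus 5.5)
Necessity is immediate. When $(n,q)=(2,2)$ the center is trivial, so $\SL_2(2)/Z=\SL_2(2)\cong S_3$. Its Sylow $2$-subgroups are cyclic of order $2$, so Theorem~\ref{thm:HP} shows that $S_3$ has no complete mapping at all. For sufficiency, the natural tool is the standard Wilcox-type lifting principle. Suppose $H\le G$, $H$ admits a complete mapping stabilizing a subgroup $K$, and $G$ admits complete mapping triples compatible with the left cosets of $H$; then $G$ admits a complete mapping stabilizing $K$. More concretely, I will use the following coset version. Suppose $G=\bigsqcup_j g_jH$ and there are bijections on the coset indices realising products coset by coset, say $g_jH\cdot h_jH'$-type equations. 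One then glues complete mappings of $H$ on each coset, using the identity coset for $H$ itself and a complete mapping of $H$ stabilizing $K$ there. I would first record this gluing lemma, namely that a complete mapping of $G$ can be assembled from a complete mapping of a subgroup $H$ stabilizing $K$ together with a ``coset complete mapping'' of $G$ relative to $H$. I would then choose $H$ to be a subgroup containing a Sylow $2$-subgroup of $G=\SL_n(q)/Z$ whose structure is understood.

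The main plan is a case split on the parity of $q$. For $q$ even, a Sylow $2$-subgroup is the image of the upper unitriangular group $U$, and $Z$ has odd order. I take $H=B/Z$, the Borel subgroup, which is $U\rtimes T$. Inside $B$, it suffices to produce a complete mapping of the solvable group $B/Z$ stabilizing $U$. Since $U$ is normal with odd-order complement $T/Z$ (abelian, of odd order), I can combine a complete mapping of $U$ (which exists for $n\ge3$ or $q\ge4$, since $U$ is then noncyclic) with an orthomorphism-type map on $T/Z$, for example $t\mapsto t^2$ structure on odd abelian groups. The exceptional small case $n=2$, $q\ge4$ is handled with $U\cong \F_q$ elementary abelian of rank $\ge2$. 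To pass from $B/Z$ to $G$, I use the Bruhat decomposition $G=\bigsqcup_{w} BwB$. I aim to exhibit, double coset by double coset, complete mapping triples that map $B$ to itself. I expect to handle the big cells uniformly via a bijection $UwB\to UwB$ compatible with multiplication by a fixed element.

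For $q$ odd, a Sylow $2$-subgroup of $\SL_n(q)$ lies in a block-diagonal subgroup. Writing $n=\sum 2^{a_i}$ and using wreath products $\SL_2(q)\wr$-type constructions, I would aim for the subgroup $M$ that is the stabilizer of a decomposition into $2$-dimensional blocks (plus possibly a $1$-dimensional block). This reduces the problem to two tasks. The first is the case $n=2$, i.e. $\SL_2(q)$ and $\PSL_2(q)$, where the Sylow $2$-subgroup is generalized quaternion or dihedral and lies in the normaliser of a split or nonsplit torus. There I would build explicit complete mappings of $N(T)$ stabilizing the Sylow subgroup, then lift to $G$ using the transitive action on $\mathbb{P}^1(\F_q)$ and the coset lemma. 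The second task is the lift from $M/Z$ to $G$ via a coset complete mapping. The quotient by $Z$ is compatible throughout, since all constructions are chosen to commute with central multiplication (central elements lie in the torus).

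The main obstacle I expect is the $q$ odd case with $Z$ containing $-I$ when $n=2$. There $\PSL_2(q)$ has a dihedral Sylow $2$-subgroup, and small cases like $\PSL_2(3)\cong A_4$, $\PSL_2(5)$, $\PSL_2(7)$, $\PSL_2(9)$ likely need direct (computer-assisted) constructions. I would also expect difficulty in ensuring that the coset-level mapping for $G/H$ exists uniformly, which is essentially a Hall--Paige-type condition on the permutation action. My first attempt there would be to realise it using a regular (Singer-like or unipotent) subgroup acting regularly on the cosets.
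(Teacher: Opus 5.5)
Your necessity argument is correct. So is your first step in even characteristic: a complete mapping of $B=U\rtimes T$ stabilizing $U$, for instance via the exact factorization $B=TU$, which is also where the paper starts. The gap is in the lifting steps, and these carry the real content of the theorem.

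\textbf{Even characteristic.} You propose to handle the Bruhat cells $BwB$ one at a time, and this cannot work. Any complete mapping triple supported on a single set $D$ forces $D\subseteq D^2$. Even coset-level data $\widetilde z(i)\subseteq\widetilde x(i)\widetilde y(i)$ supported on $D$ forces $D\cap D^2\neq\varnothing$. Now take $n=4$ and $w=[2,4,1,3]\in S_4$. Then $\ell(w)=3$ and $w^2=[4,3,2,1]=w_0$, so $\ell(w^2)=2\ell(w)$. Hence $(BwB)^2=Bw_0B$, which is disjoint from $BwB$. So cells must be mixed, and you give no mechanism for doing so.

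The paper instead lifts from $B$ to $G$ through a chain of flags, inserting one subspace at a time. For $H=G_{\mathcal F'}\le X=G_{\mathcal F}$, the $H$-double cosets in $X$ are $D_j=\{x:\dim(Y\cap xY)=a-j\}$, where $Y$ is the inserted subspace modulo the previous member of the flag. These do satisfy $D_j\subseteq D_j^2$ by Lemma~\ref{lem:grassmann-triangle}. Lemma~\ref{lem:single-double-coset} (Wilcox's common transversal) then gives a triple on each $D_j$ with $j\geq1$, the prescribed mapping is kept on $D_0=H$, and the indices $\binom{a+b}{a}_q$ are odd.

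\textbf{Odd characteristic.} Here the lift from your block-decomposition stabilizer $M$ to $G$ is not constructed at all. The fallback you mention, a subgroup acting regularly on $G/H$ (i.e.\ an exact factorization), is not available in general. Already for $n=2$ and $q\equiv1\pmod4$ one would need a subgroup of order $[G:N]=q(q+1)/2$. Such a subgroup contains a Sylow $p$-subgroup, so by Dickson's classification it lies in a Borel subgroup of order $q(q-1)/|Z|$, which is impossible.

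Nor is a complete mapping of $M$ stabilizing a Sylow $2$-subgroup automatic. For $n=3$, $M/Z$ is $\GL_2(q)$ modulo a central subgroup of order $1$ or $3$, and its quotient by $\SL_2(q)$ is cyclic of even order. So gluing over $\SL_2(q)$ along a normal series breaks down.

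The paper needs a specific device at each of these points:
\begin{itemize}
\item For $n=2$ it uses the torus normalizer $N$, split or nonsplit according to $q\bmod 4$. When $q\equiv3\pmod4$ it uses the exact factorization $G=(UT_0)N$. When $q\equiv1\pmod4$ it applies Lemma~\ref{lem:local-wilcox} to cosets of the odd-order unipotent group $U$, with prescribed representatives on $N$ and a common left/right transversal containing $N$.
\item For $n=2^a$ it runs a doubling induction through $K=G_{\{E_1,E_2\}}$, which has odd index $\tfrac12q^{m^2}\binom{2m}{m}_q$. The double cosets $Q_0$ and $Q_0wQ_0$ are treated jointly, using a perfect matching of $\operatorname{Cay}(M_m(q),\GL_m(q))$ with the vertex $0$ removed.
\item For general $n$ it uses flags adapted to the binary expansion of $n$. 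The case $n=3$, $q\equiv3\pmod4$ is handled by a separate exact factorization of $\GL_2(q)$.
\end{itemize}
None of this requires computer checks of small cases such as $\PSL_2(5)$ or $\PSL_2(9)$.
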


The exception $(n,q)=(2,2)$ is necessary, since $\SL_2(2)\cong\PSL_2(2)\cong S_3$ has a nontrivial cyclic Sylow $2$-subgroup and hence admits no complete mapping by Theorem \ref{thm:HP}.
Taking $Z=1$ and $Z=Z(\SL_n(q))$ in Theorem~\ref{thm:main} yields the following corollary.

\begin{core}\label{cor:PSL}
Let $n\geq2$ and let $q$ be a prime power.
Then each of $\SL_n(q)$ and $\PSL_n(q)$ admits a complete mapping stabilizing a Sylow $2$-subgroup if and only if	$(n,q)\neq(2,2)$.
\end{core}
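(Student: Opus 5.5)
The statement to prove is Corollary~\ref{cor:PSL}, and it follows at once from Theorem~\ref{thm:main}, which is stated immediately before it. There are two specializations, which are the two quotients the corollary mentions.

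First, since $1\leq Z(\SL_n(q))$ always holds, Theorem~\ref{thm:main} applies with $Z=1$. Because $\SL_n(q)/1\cong\SL_n(q)$, it gives the following: $\SL_n(q)$ admits a complete mapping stabilizing a Sylow $2$-subgroup if and only if $(n,q)\neq(2,2)$.

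Second, we apply Theorem~\ref{thm:main} with $Z=Z(\SL_n(q))$. By definition $\PSL_n(q)=\SL_n(q)/Z(\SL_n(q))$, so the same equivalence holds for $\PSL_n(q)$.

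Strictly speaking, the properties "admits a complete mapping" and "stabilizes a Sylow $2$-subgroup" must be preserved under isomorphism. This is clear. If $\varphi\colon G\to G'$ is an isomorphism and $\theta$ is a complete mapping of $G$ stabilizing a Sylow $2$-subgroup $P$, then $\varphi\theta\varphi^{-1}$ is a complete mapping of $G'$, because $x\mapsto x\,\varphi\theta\varphi^{-1}(x)$ equals $\varphi\circ(y\mapsto y\theta(y))\circ\varphi^{-1}$. It also stabilizes $\varphi(P)$, which is a Sylow $2$-subgroup of $G'$.

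There is no real obstacle here. All the substantive work lies in Theorem~\ref{thm:main}, whose "only if" direction is already explained after its statement: $\SL_2(2)\cong\PSL_2(2)\cong S_3$ has a cyclic Sylow $2$-subgroup and so admits no complete mapping by Theorem~\ref{thm:HP}.
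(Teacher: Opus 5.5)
Your proposal is correct and is exactly the paper's argument: the corollary follows by specializing Theorem~\ref{thm:main} to $Z=1$ and to $Z=Z(\SL_n(q))$. Your extra check that complete mappings stabilizing a Sylow $2$-subgroup transfer along isomorphisms is also correct, though it is not needed.
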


Our proof proceeds by constructing complete mappings on suitable subgroups and then extending them to the whole group. The key observation is that if $H\leq G$ has odd index and $P\in\mathrm{Syl}_2(H)$, then $P\in\mathrm{Syl}_2(G)$. Hence, any complete mapping of $G$ that extends a complete mapping of $H$ stabilizing $P$ also stabilizes $P$. Our constructions use the geometry of the vector space $V=\F_q^n$. For $G=\SL_n(q)/Z$, where $Z\leq Z(\SL_n(q))$, we employ two types of subgroups of $G$ arising from this geometry. The first is the stabilizer in $G$ of a suitable flag in $V$. The second is, when $V=U\oplus W\oplus F$ with $\dim U=\dim W>0$, the subgroup of $G$ preserving $F$ and the unordered pair $\{U,W\}$ setwise, where $F$ may be zero. When the characteristic of $\F_q$ is odd, both types of subgroups are needed at different stages of the proof. When the characteristic of $\F_q$ is even, the argument is simpler and only flag stabilizers are needed. More precisely, we use the image in $G$ of a Borel subgroup of $\SL_n(q)$, equivalently the stabilizer in $G$ of a complete flag in $V$.

The paper is organized as follows. Section~\ref{sec:preliminaries} collects the notation and preliminary results. Section~\ref{sec:general-lifting} develops the general extension tools, and Section~\ref{sec:linear-lifting} establishes the geometric extension results. Sections~\ref{sec:odd} and~\ref{sec:even} deal with the odd- and even-characteristic cases, respectively. Finally, Section~\ref{sec:8} completes the proof of the main theorem and concludes the paper with a discussion of the connection with difference matrices and possible extensions of our approach to other finite simple groups of Lie type and sporadic simple groups.

\section{Preliminaries}\label{sec:preliminaries}

In this section, we collect some definitions and preliminary results that will be used throughout the paper.

All groups are finite. For a subgroup $H\leq G$, we write $G/H=\{gH:g\in G\}$ for the set of left $H$-cosets; normality of $H$ is not assumed unless explicitly stated. We use $\mathbin{\dot\cup}$ to denote the disjoint union of sets.

When $G=\SL(V)/Z$ and $\pi:\SL(V)\to G$ is the natural projection, we use hats to denote objects in $\SL(V)$ and tildes to denote objects in $\GL(V)$. For a subgroup $\widehat H\leq\SL(V)$, we write $H=\pi(\widehat H)$ for its image in $G$. We write $D_{2d}$ for the dihedral group of order $2d$.

We shall also use the elementary fact that every finite group of odd order admits a complete mapping: the identity map is a complete mapping, since the squaring map $x\mapsto x^2$ is a permutation.

\subsection{Linear groups and flags}

In this subsection, we collect some facts about linear groups. For maximal tori, Borel subgroups, and Levi decompositions in $\SL_n(q)$, we refer to \cite[Chapters~11--12]{MalleTesterman}. For split and nonsplit tori, as well as the Bruhat decomposition of $\SL_2(q)$, we refer to \cite[Chapter~1]{Bonnafe2011}.

\begin{defi}\label{def:Borel}
In $\SL_n(q)$, let
\[
\widehat T^{+}:=\left\{\diag(t_1,\ldots,t_n):t_i\in\F_q^\times,\ t_1\cdots t_n=1\right\}\cong(\F_q^\times)^{n-1}\cong C_{q-1}^{\,n-1}.
\]
A subgroup of $\SL_n(q)$ is a \emph{maximally split torus} if and only if it is conjugate to $\widehat T^{+}$. Let
\[
\widehat B:=\{A\in\SL_n(q):A\text{ is upper triangular}\}.
\]
A subgroup of $\SL_n(q)$ is a \emph{Borel subgroup} if and only if it is conjugate to $\widehat B$. Let
\[
\widehat U:=\{A\in\widehat B:\text{all diagonal entries of }A\text{ are }1\}.
\]
The subgroup $\widehat U$ is called the \emph{unipotent radical} of $\widehat B$, and
\[
\widehat B=\widehat U\rtimes\widehat T^{+},
\]
with $\widehat T^{+}$ acting on $\widehat U$ by conjugation.
\end{defi}

\begin{defi}\label{def:nonsplit-torus}
Let $n=2$ and let $q$ be odd. Fix a nonsquare $\varepsilon\in\F_q^\times$. Let
\[
\widetilde T^{-}:=\left\{
\begin{pmatrix}
a&b\\ \varepsilon b&a
\end{pmatrix}
:a,b\in\F_q,\ (a,b)\neq(0,0)\right\}\cong\F_{q^2}^{\times}\cong C_{q^2-1}.
\]
A subgroup of $\GL_2(q)$ is a \emph{nonsplit maximal torus} if and only if it is conjugate to $\widetilde T^{-}$. Let
\[
\widehat T^{-}:=\widetilde T^{-}\cap\SL_2(q)=\left\{
\begin{pmatrix}
a&b\\ \varepsilon b&a
\end{pmatrix}
:a,b\in\F_q,\ a^2-\varepsilon b^2=1\right\}\cong C_{q+1}.
\]
A subgroup of $\SL_2(q)$ is a \emph{nonsplit maximal torus} if and only if it is conjugate to $\widehat T^{-}$.
\end{defi}

We now recall the definition of flags in a vector space.

\begin{defi}\label{def:flag}
A \emph{flag} in a vector space $V$ is a strictly increasing sequence of subspaces
\[
\mathcal F: 0=V_0<V_1<\cdots<V_r=V.
\]
If $\dim V=n$, $r=n$, and $\dim V_i=i$, $0\leq i\leq n$, then $\mathcal F$ is called a \emph{complete flag}. Otherwise, it is called a \emph{partial flag}.

If $X$ is a group acting on the subspaces of $V$, the stabilizer of $\mathcal F$ in $X$ is denoted by
\[
X_{\mathcal F}=\left\{x\in X:x(V_i)=V_i,\ 0\leq i\leq r\right\}.
\]
When $X=\SL_n(q)$, the stabilizer of a flag is called a \emph{parabolic subgroup} of $\SL_n(q)$. If the flag is complete, then its stabilizer is a Borel subgroup. Conversely, every parabolic subgroup of $\SL_n(q)$ arises as the stabilizer of a flag, and every Borel subgroup arises as the stabilizer of a complete flag, see, for example, \cite[Chapter~12]{MalleTesterman}.
\end{defi}

In Definition~\ref{def:Borel}, we recorded a decomposition of a Borel subgroup of $\SL_n(q)$. Since Borel subgroups arise as the stabilizers of complete flags, we now give the corresponding decomposition for the stabilizer of an arbitrary flag, namely, for a parabolic subgroup.

\begin{lem}\label{lem:flag-Levi}{\rm \cite[Section 2.5, Proposition 2]{AlperinBell1995}}
Let $\mathcal F:0=V_0<V_1<\cdots<V_r=V=\F_q^n$ be a flag, and let $\widehat P:=\SL(V)_{\mathcal F}$. For each $1\leq i\leq r$, choose a complement $E_i$ of $V_{i-1}$ in $V_i$. Then $V_i=V_{i-1}\oplus E_i$, $V=E_1\oplus\cdots\oplus E_r$. Put $d_i=\dim E_i$. With respect to this direct sum decomposition, define
\[
\widehat U_{\mathcal F}
=\left\{\begin{pmatrix} I_{d_1}&*&\cdots&*\\ 0&I_{d_2}&\cdots&*\\ \vdots&&\ddots&\vdots\\ 0&\cdots&0&I_{d_r}\end{pmatrix}\right\},
\]
where each $*$ denotes an arbitrary matrix of the appropriate size, and
\[
\widehat L_{\mathcal F}=\left\{\diag(A_1,\ldots,A_r): A_i\in\GL_{d_i}(q),\ \prod_{i=1}^{r}\det A_i=1\right\}.
\]
Then $\widehat P=\widehat U_{\mathcal F}\rtimes\widehat L_{\mathcal F}$, where $\widehat L_{\mathcal F}$ acts on $\widehat U_{\mathcal F}$ by conjugation. Here $\widehat U_{\mathcal F}$ is the unipotent radical of $\widehat P$, and $\widehat L_{\mathcal F}$ is called a Levi subgroup of $\widehat P$.
\end{lem}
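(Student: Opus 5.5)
The statement to prove is Lemma~\ref{lem:flag-Levi}. This is a standard fact cited from \cite{AlperinBell1995}, and the plan is a direct verification in a basis adapted to the decomposition.

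\emph{Step 1: set up the basis and identify $\widehat P$.} Since $V_i=V_{i-1}\oplus E_i$ for each $i$, induction on $i$ gives $V_i=E_1\oplus\cdots\oplus E_i$; in particular $V=E_1\oplus\cdots\oplus E_r$. Choose a basis of $V$ by concatenating bases of $E_1,\dots,E_r$. In this basis, an element $g\in\SL(V)$ satisfies $g(V_i)\subseteq V_i$ for all $i$ exactly when its matrix is block upper triangular with diagonal blocks of sizes $d_1,\dots,d_r$. Since $g$ is invertible and $V_i$ is finite-dimensional, $g(V_i)\subseteq V_i$ already forces $g(V_i)=V_i$. Hence $\widehat P$ is precisely the set of block upper triangular matrices of determinant $1$.

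\emph{Step 2: define the projection to the Levi factor.} Let
\[
\rho:\widehat P\to \widehat L_{\mathcal F},\qquad \rho(g)=\diag(A_1,\ldots,A_r),
\]
where $A_1,\dots,A_r$ are the diagonal blocks of $g$. Block multiplication shows that the diagonal blocks of a product of block upper triangular matrices are the products of the corresponding diagonal blocks, so $\rho$ is a homomorphism. It is well defined into $\widehat L_{\mathcal F}$ because $\det g=\prod_i\det A_i=1$, and each $A_i$ is invertible since this product is nonzero. Moreover $\rho$ restricts to the identity on $\widehat L_{\mathcal F}\leq\widehat P$, so $\rho$ is a surjective retraction.

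\emph{Step 3: conclude the semidirect product decomposition.} The kernel of $\rho$ consists of the block unitriangular matrices, which is exactly $\widehat U_{\mathcal F}$. Therefore $\widehat U_{\mathcal F}$ is normal in $\widehat P$. Since $\rho$ is the identity on $\widehat L_{\mathcal F}$, we get $\widehat U_{\mathcal F}\cap\widehat L_{\mathcal F}=1$. Every $g\in\widehat P$ factors as $g=\bigl(g\rho(g)^{-1}\bigr)\rho(g)$ with $g\rho(g)^{-1}\in\widehat U_{\mathcal F}$, so $\widehat P=\widehat U_{\mathcal F}\widehat L_{\mathcal F}$. Together these give $\widehat P=\widehat U_{\mathcal F}\rtimes\widehat L_{\mathcal F}$, with $\widehat L_{\mathcal F}$ acting on $\widehat U_{\mathcal F}$ by conjugation.

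\emph{Step 4: the terminology.} The naming of $\widehat U_{\mathcal F}$ as the unipotent radical can be justified as follows. It is a normal $p$-subgroup, since every element is unipotent and the group has order a power of $q$. The quotient $\widehat P/\widehat U_{\mathcal F}\cong\widehat L_{\mathcal F}$ is a product of general linear groups, subject to the single determinant condition, and has no nontrivial normal subgroup consisting of unipotent elements. This last point is the only place requiring more than bookkeeping, and I would simply defer to \cite{MalleTesterman} for it, since it is used in the paper only as a name.

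The main obstacle is minor. It is the observation in Step 1 that the stabilizer of each $V_i$ corresponds exactly to block upper triangularity in the chosen basis. Everything after that is routine block matrix multiplication.
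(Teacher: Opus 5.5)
Your proof is correct and complete for everything the paper uses. The paper gives no argument of its own here; it only cites Alperin--Bell. Your adapted-basis description of $\widehat P$ as the block upper triangular matrices of determinant $1$, followed by the diagonal-block retraction $\rho$ (identity on $\widehat L_{\mathcal F}$, kernel $\widehat U_{\mathcal F}$), is the standard verification.

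The point you defer in Step~4 is only terminological and is never used. If you want to close it, here is the argument that $O_p(\widehat L_{\mathcal F})=1$:
\begin{itemize}
\item If $r\geq2$, the projection of $\widehat L_{\mathcal F}$ onto each factor $\GL_{d_i}(q)$ is surjective. So a normal $p$-subgroup of $\widehat L_{\mathcal F}$ maps to a normal $p$-subgroup of each $\GL_{d_i}(q)$. Such a subgroup has a nonzero fixed space, which is invariant under $\GL_{d_i}(q)$ and hence all of $\F_q^{d_i}$. Thus every projection is trivial, and so is the subgroup.
\item If $r=1$, then $\widehat L_{\mathcal F}=\SL_n(q)$, and the same fixed-space argument works because $\SL_n(q)$ acts irreducibly on $\F_q^n$.
\end{itemize}
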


We shall frequently pass between a flag and a refinement of it.

\begin{defi}\label{def:flag-refinement}
Let $\mathcal F:0=V_0<V_1<\cdots<V_r=V$ be a flag. A flag $\mathcal F'$ is called a \emph{refinement} of $\mathcal F$ if every member of $\mathcal F$ is also a member of $\mathcal F'$. We call $\mathcal F'$ the \emph{finer flag} and $\mathcal F$ the \emph{coarser flag}.
\end{defi}

In particular, inserting a subspace $\widetilde U$ with $V_{i-1}<\widetilde U<V_i$ gives the refinement
\[
\mathcal F': 0=V_0<\cdots<V_{i-1}<\widetilde U<V_i<\cdots<V_r=V.
\]
In this single-step refinement, if $\dim(V_i/V_{i-1})=a+b$ and $\dim(\widetilde U/V_{i-1})=a$, then $\dim(V_i/\widetilde U)=b$. Thus inserting $\widetilde U$ amounts to choosing an $a$-dimensional subspace $\widetilde U/V_{i-1}\leq V_i/V_{i-1}$.

Finally, we record the Bruhat decomposition of $\SL_2(q)$.

\begin{lem}\label{lem:SL2-Bruhat}{\rm \cite[Section 1.1.1]{Bonnafe2011}}
Let $\widehat B=\widehat T^{+}\widehat U$ be a Borel subgroup of $\SL_2(q)$, and let
\[
\widehat w=
\begin{pmatrix}
0&-1\\1&0
\end{pmatrix}.
\]
Then $\SL_2(q)=\widehat B\mathbin{\dot\cup}\widehat B\widehat w\widehat B=\widehat T^{+}\widehat U\mathbin{\dot\cup}\widehat U\widehat w\widehat T^{+}\widehat U$.
\end{lem}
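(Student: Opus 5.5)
The statement immediately above is Lemma~\ref{lem:SL2-Bruhat}, the Bruhat decomposition of $\SL_2(q)$. I would prove it directly by looking at the $(2,1)$ entry of a matrix. The relevant group is the upper triangular Borel subgroup $\widehat B$, which is also the stabilizer of the line $\langle e_1\rangle$.

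\textbf{First piece.} Let $\widehat g=\begin{pmatrix}a&b\\c&d\end{pmatrix}\in\SL_2(q)$.
\begin{itemize}
\item If $c=0$, then $\widehat g\in\widehat B$ by definition.
\item By Definition~\ref{def:Borel}, $\widehat B=\widehat U\rtimes\widehat T^{+}$. Since $\widehat U$ is normal in $\widehat B$, this gives $\widehat B=\widehat T^{+}\widehat U$.
\end{itemize}

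\textbf{Second piece.} Suppose $c\neq0$. I would exhibit the factorization explicitly:
\[
\begin{pmatrix}1&x\\0&1\end{pmatrix}\widehat w\begin{pmatrix}t&0\\0&t^{-1}\end{pmatrix}\begin{pmatrix}1&y\\0&1\end{pmatrix}
=\begin{pmatrix}xt&xty-t^{-1}\\ t& ty\end{pmatrix}.
\]
Matching entries, I take $t=c$, $x=a/c$ and $y=d/c$. The top-right entry is then
\[
\frac{ad}{c}-\frac1c=\frac{ad-1}{c}=b,
\]
using $ad-bc=1$. So $\widehat g\in\widehat U\widehat w\widehat T^{+}\widehat U$.

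\textbf{Equality of the two descriptions.} Since $\widehat w\widehat T^{+}=\widehat T^{+}\widehat w$ ($\widehat w$ normalizes $\widehat T^{+}$), we have
\[
\widehat U\widehat w\widehat T^{+}\widehat U\subseteq\widehat B\widehat w\widehat B .
\]
For the reverse inclusion, write $\widehat B=\widehat U\widehat T^{+}$ and use normality of $\widehat U$ in $\widehat B$ again:
\[
\widehat B\widehat w\widehat B=\widehat U\widehat T^{+}\widehat w\widehat T^{+}\widehat U=\widehat U\widehat w\widehat T^{+}\widehat U .
\]
Together with the first piece, this shows the union covers $\SL_2(q)$.

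\textbf{Disjointness.} I would use the action on $\langle e_1\rangle$.
\begin{itemize}
\item Every element of $\widehat B$ fixes the line $\langle e_1\rangle$.
\item For $\widehat b_1\widehat w\widehat b_2$ with $\widehat b_1,\widehat b_2\in\widehat B$, the factor $\widehat b_2$ fixes $\langle e_1\rangle$ and $\widehat w$ sends it to $\langle e_2\rangle$. Then $\widehat b_1$ cannot send $\langle e_2\rangle$ to $\langle e_1\rangle$, since $\widehat b_1$ stabilizes $\langle e_1\rangle$ and is invertible.
\item Hence elements of $\widehat B\widehat w\widehat B$ do not fix $\langle e_1\rangle$, so the union is disjoint.
\end{itemize}

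There is no real obstacle here. The only point requiring care is the determinant identity in the second piece, where $ad-bc=1$ is exactly what makes the top-right entry come out to $b$.
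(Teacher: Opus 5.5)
Your proof is correct. The paper gives no argument for this lemma and simply cites \cite[Section~1.1.1]{Bonnafe2011}. Your direct verification through the $(2,1)$-entry is a self-contained replacement and is essentially the standard argument behind that citation.

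Two small points. First, in the paragraph on the equality of the two descriptions the justifications are misplaced. The inclusion $\widehat U\widehat w\widehat T^{+}\widehat U\subseteq\widehat B\widehat w\widehat B$ is immediate from $\widehat U\subseteq\widehat B$ and $\widehat T^{+}\widehat U\subseteq\widehat B$. The step $\widehat U\widehat T^{+}\widehat w\widehat T^{+}\widehat U=\widehat U\widehat w\widehat T^{+}\widehat U$ is the one that needs $\widehat T^{+}\widehat w=\widehat w\widehat T^{+}$; normality of $\widehat U$ only gives $\widehat B=\widehat U\widehat T^{+}=\widehat T^{+}\widehat U$. That reverse inclusion is in fact not needed: once $\widehat B\cap\widehat B\widehat w\widehat B=\varnothing$, every element of $\widehat B\widehat w\widehat B$ has $c\neq0$, so it lies in $\widehat U\widehat w\widehat T^{+}\widehat U$ by your second piece.

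Second, your explicit product formula buys more than existence. The $(2,1)$-entry of $\begin{pmatrix}1&x\\0&1\end{pmatrix}\widehat w\,\mathrm{diag}(t,t^{-1})\begin{pmatrix}1&y\\0&1\end{pmatrix}$ is $t\neq0$, and $t,x,y$ are forced to equal $c,a/c,d/c$. This gives disjointness without using the action on $\langle e_1\rangle$. It also gives uniqueness of the factorization, which is what the paper later relies on when it proves uniqueness of the cosets $B_{x,\bar a}$ by comparing entries in Lemma~\ref{lem:rank-one-cosets}.
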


\subsection{Gaussian binomial coefficients}

We conclude this section with two elementary facts about Gaussian binomial coefficients that will be used later.

\begin{lem}\label{lem:qbinom-parity}
For integers $0\leq s\leq r$, the Gaussian binomial coefficient is defined by $\binom{r}{s}_q:=\prod\limits_{i=0}^{s-1}\frac{q^{r-i}-1}{q^{s-i}-1}$. Then the following hold.
\begin{enumerate}
\item[$(1)$] If $q$ is even, then $\binom{r}{s}_q$ is odd.
\item[$(2)$] If $q$ is odd, then $\binom{r}{s}_q\equiv\binom{r}{s}\pmod 2$. In particular, if the binary addition of $s$ and $r-s$ involves no carries, then $\binom{r}{s}_q$ is odd.
\end{enumerate}
\end{lem}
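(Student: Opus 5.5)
The statement to prove is Lemma~\ref{lem:qbinom-parity}. The plan is to work with the Gaussian binomial coefficient as a polynomial identity and reduce modulo $2$ directly, rather than manipulate the product formula factor by factor. I will use the $q$-Pascal recurrence
\[
\binom{r}{s}_q=\binom{r-1}{s-1}_q+q^{s}\binom{r-1}{s}_q ,
\]
valid for $1\leq s\leq r-1$, with boundary values $\binom{r}{0}_q=\binom{r}{r}_q=1$. This recurrence is a routine check from the product definition: put both terms over a common denominator and use $(q^{r-s}-1)+q^{r-s}(q^{s}-1)=q^{r}-1$. It shows by induction on $r$ that each $\binom{r}{s}_q$ is an integer, so parity makes sense.

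For $(1)$, suppose $q$ is even. If $1\leq s\leq r-1$, then $q^{s}\equiv 0\pmod 2$, so the recurrence gives $\binom{r}{s}_q\equiv\binom{r-1}{s-1}_q\pmod 2$. Iterating, we reach $\binom{r-s}{0}_q=1$, so the coefficient is odd. Alternatively, one can see this straight from the product: every factor $q^{k}-1$ with $k\geq1$ is odd, so numerator and denominator are both odd, and the quotient is then an odd integer.

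For $(2)$, suppose $q$ is odd. Then $q^{s}\equiv1\pmod 2$, so modulo $2$ the recurrence becomes the ordinary Pascal recurrence with the same boundary values. Induction on $r$ then gives $\binom{r}{s}_q\equiv\binom{r}{s}\pmod 2$ for all $0\leq s\leq r$. For the ``in particular'' clause, I apply Lucas's theorem at $p=2$, or equivalently Kummer's theorem: the $2$-adic valuation of $\binom{r}{s}$ equals the number of carries when $s$ and $r-s$ are added in base $2$. So no carries means $\binom{r}{s}$ is odd, and hence so is $\binom{r}{s}_q$.

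No step is a real obstacle. The only point needing care is the integrality of $\binom{r}{s}_q$, which is what justifies reducing modulo $2$; the recurrence handles it. The carry criterion is quoted as a classical fact.
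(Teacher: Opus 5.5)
Your proof is correct and is essentially the paper's argument: you reduce a $q$-Pascal recurrence modulo $2$, induct on $r$, and use Lucas's theorem for the carry criterion. The differences are minor: you use the mirror form $\binom{r}{s}_q=\binom{r-1}{s-1}_q+q^{s}\binom{r-1}{s}_q$ rather than the paper's $\binom{r}{s}_q=\binom{r-1}{s}_q+q^{r-s}\binom{r-1}{s-1}_q$, and you make integrality explicit. One cosmetic slip: the identity that verifies your form is $(q^{s}-1)+q^{s}(q^{r-s}-1)=q^{r}-1$, whereas the one you quote verifies the paper's form; both identities are true, so nothing breaks.
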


\begin{proof}
The Gaussian binomial coefficients satisfy the recurrence $\binom{r}{s}_q=\binom{r-1}{s}_q+q^{\,r-s}\binom{r-1}{s-1}_q$, with $\binom{r}{0}_q=\binom{r}{r}_q=1$, see \cite[Theorem~3.2]{Andrews}.

Suppose first that $q$ is even. If $s=0$ or $s=r$, then $\binom{r}{s}_q=1$. Now assume that $0<s<r$. Since $q\equiv0\pmod2$, the recurrence gives $\binom{r}{s}_q\equiv\binom{r-1}{s}_q\pmod2$. Iterating this congruence yields
\[
\binom{r}{s}_q\equiv\binom{r-1}{s}_q\equiv\cdots\equiv\binom{s}{s}_q=1\pmod2.
\]
Thus $\binom{r}{s}_q$ is odd.

Now suppose that $q$ is odd. We prove $\binom{r}{s}_q\equiv\binom{r}{s}\pmod2$ simultaneously for all $0\leq s\leq r$ by induction on $r$. The case $r=0$ is immediate. Assume that $r\geq1$ and that the assertion holds for $r-1$. If $s=0$ or $s=r$, the assertion is immediate. Now let $0<s<r$. By the induction hypothesis, $\binom{r-1}{s}_q\equiv\binom{r-1}{s}\pmod2$, $\binom{r-1}{s-1}_q\equiv\binom{r-1}{s-1}\pmod2$. Since $q\equiv1\pmod2$, the recurrence gives
\[
\binom{r}{s}_q\equiv\binom{r-1}{s}_q+\binom{r-1}{s-1}_q\equiv\binom{r-1}{s}+\binom{r-1}{s-1}=\binom{r}{s}\pmod2.
\]

Finally, suppose that the binary addition of $s$ and $r-s$ involves no carries. Write $s=\sum_{i\geq0}s_i2^i$, $r-s=\sum_{i\geq0}t_i2^i$, where $s_i,t_i\in\{0,1\}$. Then $s_i+t_i\leq1$ for every $i$, and hence $r=\sum_{i\geq0}(s_i+t_i)2^i$. By Lucas' theorem,
\[
\binom{r}{s}\equiv\prod_{i\geq0}\binom{s_i+t_i}{s_i}\equiv1\pmod2.
\]
By part~$(2)$, $\binom{r}{s}_q\equiv\binom{r}{s}\equiv1\pmod2$. Therefore $\binom{r}{s}_q$ is odd.
\end{proof}

\begin{lem}\label{lem:equal-qbinom-valuation}
For a positive integer $n$, let $\nu_2(n):=\max\{e\geq0:2^e\mid n\}$ denote the $2$-adic valuation of $n$. Suppose that $q$ is odd and $m$ is a power of $2$. Then
\begin{itemize}
\item[$(1)$] $\nu_2\left(\binom{2}{1}_q\right)=\nu_2(q+1)$;
\item[$(2)$] $\nu_2\left(\binom{2m}{m}_q\right)=1$ if $m\geq 2$.
\end{itemize}
\end{lem}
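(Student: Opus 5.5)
Part (1) is immediate: by definition $\binom{2}{1}_q=\frac{q^2-1}{q-1}=q+1$, so $\nu_2\bigl(\binom{2}{1}_q\bigr)=\nu_2(q+1)$.

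For part (2), write the Gaussian binomial as a ratio of $q$-factorials,
\[
\binom{2m}{m}_q=\frac{\prod_{j=1}^{2m}(q^j-1)}{\prod_{j=1}^{m}(q^j-1)^2},
\]
and compute $\nu_2$ of each factor $q^j-1$ with the lifting-the-exponent lemma for odd $q$:
\[
\nu_2(q^j-1)=
\begin{cases}
\nu_2(q-1), & j \text{ odd},\\
\nu_2(q^2-1)+\nu_2(j)-1, & j \text{ even}.
\end{cases}
\]
Set $a=\nu_2(q-1)$ and $b=\nu_2(q^2-1)-1$, so that $b\geq a$ and $b\geq 2$. Then $\nu_2(q^j-1)=a$ for odd $j$ and $\nu_2(q^j-1)=b+\nu_2(j)$ for even $j$.

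Now sum over $j$. Among $1\leq j\leq N$ with $N$ even, there are $N/2$ odd indices, contributing $(N/2)a$. The even indices $j=2k$ with $1\leq k\leq N/2$ contribute $\sum_k\bigl(b+1+\nu_2(k)\bigr)=(N/2)(b+1)+\nu_2\bigl((N/2)!\bigr)$. This gives
\[
S(N):=\nu_2\Bigl(\prod_{j=1}^{N}(q^j-1)\Bigr)=\tfrac N2(a+b+1)+\nu_2\bigl((N/2)!\bigr).
\]
Because $m\geq2$ is a power of $2$, $m$ is even, so the formula applies both with $N=2m$ and with $N=m$. Therefore
\[
\nu_2\Bigl(\binom{2m}{m}_q\Bigr)=S(2m)-2S(m)=\nu_2(m!)-2\nu_2\bigl((m/2)!\bigr).
\]
The terms linear in $a+b+1$ cancel, since $m-2\cdot\tfrac m2=0$.

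To finish, use Legendre's formula $\nu_2(k!)=k-s_2(k)$, where $s_2(k)$ is the binary digit sum. For $m=2^e$ this gives $\nu_2(m!)=m-1$ and $\nu_2((m/2)!)=m/2-1$. Hence the valuation is $(m-1)-2(m/2-1)=1$, as claimed.

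The only delicate point is the even-$j$ case of the lifting-the-exponent lemma, which must be stated so that it holds uniformly whether $q\equiv1$ or $q\equiv3\pmod 4$. It does: write $j=2k$, so $q^{j}-1=(q^2)^k-1$, and since $q^2\equiv1\pmod 8$, for $(q^2)^k-1$ with $k$ arbitrary one has $\nu_2\bigl((q^2)^k-1\bigr)=\nu_2(q^2-1)+\nu_2(k)$. With $\nu_2(j)=\nu_2(k)+1$, this is exactly $b+\nu_2(j)$. Everything else is bookkeeping.
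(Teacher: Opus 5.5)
Your proof is correct and uses the same key tool as the paper, the lifting-the-exponent lemma applied to each factor $q^j-1$. The paper only organizes the sum differently: it writes $\binom{2m}{m}_q=\prod_{i=1}^{m}\frac{q^{m+i}-1}{q^i-1}$ and pairs each numerator factor with its partner, so odd $i$ contribute nothing, even $i<m$ contribute $\nu_2(m+i)-\nu_2(i)=0$, and $i=m$ contributes $\nu_2(2m)-\nu_2(m)=1$. Your global count via $q$-factorials and Legendre's formula is equally short, and it actually gives $\nu_2\bigl(\binom{2m}{m}_q\bigr)=\nu_2(m!)-2\nu_2((m/2)!)=s_2(m)$ for every even $m$, which is more than is needed here.
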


\begin{proof}
Since $\binom{2}{1}_q=q+1$, we have $(1)$.

Assume that $m\geq2$. Since $m$ is a power of $2$, $m$ is even. Moreover, $\binom{2m}{m}_q=\prod\limits_{i=1}^{m}\frac{q^{m+i}-1}{q^i-1}$. Hence
\[
\nu_2\left(\binom{2m}{m}_q\right)=\sum_{i=1}^{m}\left(\nu_2(q^{m+i}-1)-\nu_2(q^i-1)\right).
\]

If $i$ is odd, then both $i$ and $m+i$ are odd. Since $\nu_2(q^n-1)=\nu_2(q-1)$ for every odd positive integer $n$, we have $\nu_2(q^{m+i}-1)-\nu_2(q^i-1)=0$.

Now suppose that $i$ is even. By the lifting-the-exponent lemma \cite[Theorem~6.26]{AndreescuDospinescuMushkarov}, $\nu_2(q^{m+i}-1)-\nu_2(q^i-1)=\nu_2(m+i)-\nu_2(i)$. Since $m$ is a power of $2$, for every $i<m$ we have $\nu_2(m+i)=\nu_2(i)$, whereas for $i=m$, $\nu_2(2m)-\nu_2(m)=1$. Therefore $\nu_2\left(\binom{2m}{m}_q\right)=1$.
\end{proof}

\section{Lifting constructions in general groups}\label{sec:general-lifting}

In this section, we develop several general constructions for lifting complete mappings. These constructions will be used repeatedly in the subsequent sections.

The following criterion, due to Wilcox \cite{Wilcox}, allows one to construct a complete mapping from suitable data on cosets.

\begin{thm}{\rm \cite[Proposition~14]{Wilcox}}\label{thm:wilcox}
Let $H\leq G$, and suppose that $H$ admits a complete mapping. Let $I$ be an index set with $|I|=[G:H]$. Suppose that there exist three bijections
\[
\widetilde x,\widetilde y,\widetilde z:I\longrightarrow G/H
\]
such that $\widetilde z(i)\subseteq\widetilde x(i)\widetilde y(i)$ for every $i\in I$. Then $G$ admits a complete mapping.
\end{thm}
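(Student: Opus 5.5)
This is Wilcox's criterion (Theorem~\ref{thm:wilcox}). The plan is to combine a complete mapping of $H$ with the coset bijections $\widetilde x,\widetilde y,\widetilde z$ into a complete mapping triple $(A,B,C)$ of $G$, in the sense of the introduction. Let $(a,b,c)$ be a complete mapping triple of $H$, indexed by $H$ itself, so $a(h)b(h)=c(h)$ for all $h\in H$ and $a,b,c$ are bijections $H\to H$.

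\textbf{Choosing coset representatives.} For each $i\in I$ we have $\widetilde z(i)\subseteq\widetilde x(i)\widetilde y(i)$. Fix representatives $x_i$ of $\widetilde x(i)$ and $y_i$ of $\widetilde y(i)$, so $\widetilde x(i)=x_iH$ and $\widetilde y(i)=y_iH$. The key adjustment is to choose representatives so that the product lands in the right coset. Since $\widetilde z(i)$ is nonempty, pick $g\in\widetilde z(i)\subseteq x_iHy_iH$ and write $g=x_ih_iy_ih'$ with $h_i,h'\in H$. Replacing $x_i$ by $x_ih_i\in x_iH$, we get $x_iy_i\in\widetilde z(i)$. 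Set $z_i:=x_iy_i$, so $\widetilde z(i)=z_iH$.

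\textbf{Defining the triple on $I\times H$.} Use the index set $I\times H$, which has size $|G|$. We want $A(i,h)\in x_iH$, $B(i,h)\in y_iH$, $C(i,h)\in z_iH$ with $A(i,h)B(i,h)=C(i,h)$. The obstacle is that $x_iHy_iH$ is not generally a single coset, because $y_i$ need not normalize $H$. To get around this, conjugate $H$ inside the coset of $y_i$: write elements of $x_iH$ as $x_iy_i\,k\,y_i^{-1}$ with $k\in H':=y_i^{-1}Hy_i$. That would require a complete mapping of $H'$ relative to $H$, which is the wrong shape, so I will instead use a left/right symmetry.

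Since $x_iH=x_iy_i(y_i^{-1}H)$, it is cleaner to use the equivalent formulation for the right cosets $Hy_i$ and express elements of $y_iH$ as $y_ih$. Define
\[
A(i,h)=x_i\,a(h)\,?,
\]
which in general fails because $a(h)$ would have to pass through $y_i$. The standard fix, as in Wilcox, is to take the complete mapping triple of $H$ and use only the last coordinate:
\[
A(i,h)=x_iy_i\,a(h)\,y_i^{-1}\quad\text{(not in } x_iH\text{ in general)}.
\]
Since this also fails, I will use the cleaner route. Set
\[
A(i,h)=x_i\,u_i(h),\qquad B(i,h)=y_i\,b(h),\qquad C(i,h)=z_i\,c(h),
\]
where $u_i(h):=y_i\,c(h)\,b(h)^{-1}y_i^{-1}$. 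The identity $A(i,h)B(i,h)=C(i,h)$ then holds automatically, since $x_iy_ic(h)b(h)^{-1}y_i^{-1}y_ib(h)=z_ic(h)$. Here $h\mapsto B(i,h)$ and $h\mapsto C(i,h)$ are bijections onto $y_iH$ and $z_iH$, because $b$ and $c$ are bijections of $H$. The remaining requirement is that $h\mapsto A(i,h)$ be a bijection onto $x_iH$. This is the step I expect to be the main obstacle, since $u_i(h)$ lies in $y_iHy_i^{-1}$ rather than $H$.

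\textbf{Handling the obstacle.} To resolve this, choose the triple of $H$ adapted to each $i$: replace $(A,B,C)$ by
\[
B(i,h)=y_i\,\beta_i(h),\qquad A(i,h)=x_i\,a(h),
\]
and solve for $B$ from $A$ and $C$, namely $B(i,h)=A(i,h)^{-1}C(i,h)$. Everything reduces to finding, for each $i$, bijections $\alpha,\gamma\colon H\to H$ such that
\[
h\mapsto \alpha(h)^{-1}x_i^{-1}z_i\,\gamma(h)=y_i\,\alpha(h)^{-1}{}^{\,?}\cdots
\]
lands bijectively in $y_iH$. This is a complete mapping of the double coset $Hy_iH$ viewed as $H\times(H\cap y_iHy_i^{-1})$-structured. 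I would handle it by decomposing $H$ into left cosets of $K_i=H\cap y_iHy_i^{-1}$, which has the same order as $y_i^{-1}Hy_i\cap H$, and appealing to a complete mapping of $K_i$. If that intermediate group has none, I would fall back on Wilcox's original argument as cited.
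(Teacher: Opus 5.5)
Your first step, replacing $x_i$ by $x_ih_i$ so that $z_i:=x_iy_i\in\widetilde z(i)$, is correct. After that the proposal never builds a triple, and the framework you set up cannot work.

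\textbf{The gap.} You require that, for each fixed $i$, the maps $A(i,\cdot)$, $B(i,\cdot)$, $C(i,\cdot)$ be bijections from $H$ onto $\widetilde x(i)$, $\widetilde y(i)$, $\widetilde z(i)$. Write $A(i,h)=x_ip$ and $B(i,h)=y_iq$ with $p,q\in H$. Then $A(i,h)B(i,h)\in\widetilde z(i)=x_iy_iH$ holds if and only if $py_i\in y_iH$, that is, $p\in H\cap y_iHy_i^{-1}$. So $A(i,\cdot)$ can only reach $x_i(H\cap y_iHy_i^{-1})$. This is a proper subset of $x_iH$ whenever $y_i\notin N_G(H)$. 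Since $\widetilde y$ is onto $G/H$, that happens for some $i$ as soon as $H$ is not normal in $G$.

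Your formula $A(i,h)=x_iy_ia(h)y_i^{-1}$ is exactly the construction that works only for normal subgroups (compare the proof of Lemma~\ref{lem:normal-extension}). The proposed repair via left cosets of $K_i=H\cap y_iHy_i^{-1}$ hits the same fiber obstruction, and in any case $K_i$ need not admit a complete mapping. Deferring to ``Wilcox's original argument'' leaves the statement unproved.

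\textbf{The missing idea} is to let the $B$-coordinate leave the $i$-th fiber.

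First choose $y_i\in\widetilde y(i)$ so that $\{y_i:i\in I\}$ is simultaneously a left and a right transversal of $H$ in $G$. This is possible because, within each double coset $HgH$, the left and right $H$-cosets are equinumerous and every left coset meets every right coset. So you can pair them arbitrarily and pick a point in each intersection; this is the argument at the end of the proof of Lemma~\ref{lem:single-double-coset}. Then choose $x_i$ and $z_i=x_iy_i$ exactly as you did, which works for any $y_i\in\widetilde y(i)$.

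Take a complete mapping triple $(a,b,c)$ of $H$ indexed by $J$. The right-transversal property gives unique $d(i,j)\in H$ and $r(i,j)\in I$ with $y_ia(j)=d(i,j)\,y_{r(i,j)}$. Put
\[
A(i,j)=x_id(i,j),\qquad B(i,j)=y_{r(i,j)}b(j),\qquad C(i,j)=z_ic(j).
\]
Then $A(i,j)B(i,j)=x_iy_ia(j)b(j)=C(i,j)$. Injectivity of $A$, $B$, $C$ on $I\times J$ follows from:
\begin{itemize}
\item $\{x_i\}$ and $\{z_i\}$ being left transversals;
\item $\{y_i\}$ being both a left and a right transversal;
\item injectivity of $a$, $b$, $c$.
\end{itemize}
Since $|I\times J|=|G|$, these maps are bijections.

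The paper does not prove this theorem; it cites it from Wilcox. However, this is precisely the argument of its Lemma~\ref{lem:local-wilcox} with $S=G$.
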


We first give a lifting construction for normal extensions.

\begin{lem}\label{lem:normal-extension}
Let $N\trianglelefteq G$, let $P\in\mathrm{Syl}_2(G)$, and let $P_0=P\cap N$, $\overline P=PN/N\in\mathrm{Syl}_2(G/N)$. If $N$ has a complete mapping stabilizing $P_0$, and $G/N$ has a complete mapping stabilizing $\overline P$, then $G$ has a complete mapping stabilizing $P$.
\end{lem}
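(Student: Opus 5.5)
We prove Lemma~\ref{lem:normal-extension} by a direct product-type construction on coset representatives, without invoking Theorem~\ref{thm:wilcox}. Let $\theta_0$ be a complete mapping of $N$ with $\theta_0(P_0)=P_0$, and let $\bar\theta$ be a complete mapping of $G/N$ with $\bar\theta(\overline P)=\overline P$. The first step is to fix a transversal $R$ of $N$ in $G$, chosen so that the representatives of the cosets in $\overline P$ lie in $P$. This is possible since $PN/N\cong P/P_0$, so each coset $gN\in\overline P$ meets $P$ and contains a coset of $P_0$ inside $P$. Write each $g\in G$ uniquely as $g=rn$ with $r\in R$ and $n\in N$, and write $\bar r=rN$.

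The second step is to define the map. For $r\in R$, let $s_r\in R$ be the representative with $\bar s_r=\bar\theta(\bar r)$, and let $t_r\in R$ be the representative of $\bar r\bar s_r$. Then $rs_r=t_rm_r$ for a unique $m_r\in N$. Set
\[
\theta(rn)=s_r\,\alpha_r(n),
\]
where $\alpha_r:N\to N$ is to be chosen. Then
\[
rn\,\theta(rn)=rs_r\,(s_r^{-1}ns_r)\,\alpha_r(n)=t_r\,m_r\,n^{s_r}\,\alpha_r(n).
\]
We need $n\mapsto\alpha_r(n)$ to be a bijection of $N$, and we need $n\mapsto m_r n^{s_r}\alpha_r(n)$ to be a bijection of $N$. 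These two conditions make $\theta$ a bijection, because $r\mapsto s_r$ is a bijection of $R$. They also make $x\mapsto x\theta(x)$ a bijection, because $r\mapsto t_r$ is a bijection.

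The third step chooses $\alpha_r$. Put $\alpha_r(n)=\theta_0(n^{s_r})^{\,s_r^{-1}}\cdots$; more simply, take
\[
\alpha_r(n)=\theta_0\bigl(s_r^{-1}ns_r\bigr).
\]
This is a bijection because conjugation by $s_r$ preserves $N$. Then $n\mapsto n^{s_r}\theta_0(n^{s_r})$ is a bijection of $N$, since $\theta_0$ is a complete mapping. Left multiplication by $m_r$ is also a bijection, so $\theta$ is a complete mapping of $G$.

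The main obstacle is the stabilization condition $\theta(P)=P$. Take $x=rn\in P$. Then $\bar r\in\overline P$, so $r\in P$ and $n\in P_0$. Also $\bar s_r\in\overline P$, so $s_r\in P$, and hence $n^{s_r}\in P_0$ because $P_0=P\cap N$ is normal in $P$. Therefore $\theta_0(n^{s_r})\in P_0$ and $\theta(x)=s_r\theta_0(n^{s_r})\in P$. Since $\theta$ is injective and $P$ is finite, $\theta(P)=P$.

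The only delicate point is choosing the representatives in $P$ for the cosets in $\overline P$. This requires $\overline P=PN/N$, which holds by the definition in the hypothesis; for cosets outside $\overline P$ the representatives can be chosen arbitrarily.
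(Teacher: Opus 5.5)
Your proof is correct and is essentially the paper's construction. Substituting $n'=s_r^{-1}ns_r$, your map $\theta(rn)=s_r\theta_0(s_r^{-1}ns_r)$ is exactly $B\circ A^{-1}$ for the paper's triple $A(t,n')=x_ty_tn'y_t^{-1}$, $B(t,n')=y_t\varphi(n')$, $C(t,n')=x_ty_tn'\varphi(n')$ with $x_t=r$, $y_t=s_r$, $\varphi=\theta_0$, and your stabilization argument (representatives of cosets in $\overline P$ chosen in $P$, together with $P_0\trianglelefteq P$) is the same; the only differences are cosmetic, namely your use of a single transversal for both $x_t$ and $y_t$, and your check that $x\mapsto x\theta(x)$ is bijective via $rs_r=t_rm_r$ rather than via the cosets $x_ty_tN$.
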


\begin{proof}
Let $\overline G=G/N$, and let $P_0=P\cap N$, $\overline P=PN/N$. Let $\varphi:N\longrightarrow N$ be a complete mapping stabilizing $P_0$, and let $\overline\theta:\overline G\longrightarrow\overline G$ be a complete mapping stabilizing $\overline P$.

For each $t\in\overline G$, choose $x_t,y_t\in G$ such that
\[
x_tN=t,\quad y_tN=\overline\theta(t).
\]
If $t\in\overline P=PN/N$, then $t$ has a representative in $P$. Moreover, $\overline\theta(t)\in\overline P$, because $\overline\theta(\overline P)=\overline P$. Hence we may choose $x_t,y_t\in P$.

Define $A,B,C:\overline G\times N\longrightarrow G$ by
\[
A(t,n)=x_ty_tny_t^{-1},\quad B(t,n)=y_t\varphi(n),\quad C(t,n)=x_ty_tn\varphi(n).
\]
Then
\[
A(t,n)B(t,n)=x_ty_tny_t^{-1}y_t\varphi(n)=x_ty_tn\varphi(n)=C(t,n).
\]

For fixed $t\in\overline G$, since $N\trianglelefteq G$, the map $n\mapsto y_tny_t^{-1}$ is a permutation of $N$. Hence, as $n$ runs through $N$, the values of $A(t,n)$ run through $x_tN$. Since $\varphi$ is a permutation of $N$, the values of $B(t,n)=y_t\varphi(n)$ run through $y_tN$. Since $\varphi$ is a complete mapping of $N$, the map $n\mapsto n\varphi(n)$ is also a permutation of $N$. Hence the values of $C(t,n)=x_ty_tn\varphi(n)$ run through $x_ty_tN$. Moreover, $x_tN=t$, $y_tN=\overline\theta(t)$, $x_ty_tN=t\overline\theta(t)$. Since
\[
t\mapsto t,\quad t\mapsto\overline\theta(t),\quad t\mapsto t\overline\theta(t)
\]
are permutations of $\overline G$, it follows that $A,B,C$ are bijections from $\overline G\times N$ onto $G$. Thus $(A,B,C)$ is a complete mapping triple of $G$. Let $\theta=B\circ A^{-1}$ be the corresponding complete mapping.

It remains to show that $\theta(P)=P$. Let $t\in\overline P$ and $n\in P_0$. By our choice of representatives, $x_t,y_t\in P$. Since $P_0\trianglelefteq P$, we have $y_tny_t^{-1}\in P_0$. Since $\varphi(P_0)=P_0$, we also have $\varphi(n)$, $n\varphi(n)\in P_0$. Hence
\[
A(t,n),\ B(t,n),\ C(t,n)\in P.
\]
Therefore the restrictions of $A,B,C$ to $\overline P\times P_0$ take values in $P$. Since $A,B,C$ are bijections on $\overline G\times N$, these restrictions are injective. Moreover, since $P/P_0\cong\overline P$, $|\overline P\times P_0|=|\overline P|\,|P_0|=|P|$. Hence each restriction is a bijection from $\overline P\times P_0$ onto $P$. Consequently, $\theta(P)=P$.
\end{proof}

For a subset $S\subseteq G$, a \emph{complete mapping triple on $S$} is a triple of bijections $A,B,C:J\longrightarrow S$ satisfying $A(j)B(j)=C(j)$. The following lemma is a local version of Theorem~\ref{thm:wilcox} with prescribed representatives.

\begin{lem}\label{lem:local-wilcox}
Let $H\leq G$, let $a,b,c:J\to H$ be a complete mapping triple, and let $S\subseteq G$ be a union of both left and right $H$-cosets. Set $S/H=\{sH:s\in S\}$. Suppose that
\[
\widetilde x,\widetilde y,\widetilde z:I\longrightarrow S/H
\]
are bijections satisfying $\widetilde z(i)\subseteq\widetilde x(i)\widetilde y(i)$, $i\in I$, and that elements $y_i\in\widetilde y(i)$ can be chosen so that $\{y_i:i\in I\}$ is simultaneously a transversal for the left and right $H$-cosets in $S$. Then there exists a complete mapping triple $(A,B,C)$ on $S$ with common index set $I\times J$.

If, in addition, there exists $j_0\in J$ such that $a(j_0)=b(j_0)=c(j_0)=1$, and $x_i\in\widetilde x(i)$, $z_i\in\widetilde z(i)$ are prescribed for some indices $i\in I$ with $z_i=x_iy_i$, then the triple may be chosen so that $A(i,j_0)=x_i$, $B(i,j_0)=y_i$, $C(i,j_0)=z_i$ for every such $i$.
\end{lem}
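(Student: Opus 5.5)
We construct the triple directly, following the pattern of the proof of Lemma~\ref{lem:normal-extension}. Index by $I\times J$. For each $i\in I$, fix $y_i\in\widetilde y(i)$ as in the hypothesis. Choose $x_i\in\widetilde x(i)$ and set $z_i=x_iy_i$, in such a way that $z_i\in\widetilde z(i)$. This is possible because $\widetilde z(i)\subseteq\widetilde x(i)\widetilde y(i)$. Indeed, writing $\widetilde z(i)=zH$ with $z=xy'$ for some $x\in\widetilde x(i)$ and $y'\in\widetilde y(i)=y_iH$, say $y'=y_ih$, we may replace $x$ by $x':=xy_ihy_i^{-1}$. Then $x'y_i=xy'=z$. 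We must check that $x'\in\widetilde x(i)$, and this is the subtle point: it need not hold for an arbitrary left coset. So instead take $z$ itself and put $x_i:=zy_i^{-1}$. Then $x_i\in \widetilde z(i)y_i^{-1}$, and we need $x_iH=\widetilde x(i)$. From $z\in \widetilde x(i)\widetilde y(i)=xHy_iH$ we get $zH\subseteq xHy_iH$, so $x_i=zy_i^{-1}\in xHy_iHy_i^{-1}$. This lies in $xH$ only when $y_i$ normalizes $H$-like products, which fails in general. The remedy is to absorb the discrepancy into $H$ via the complete mapping of $H$. When prescribed $x_i,z_i$ are given, we use them; otherwise we pick any $z_i\in\widetilde z(i)$, write $z_i=x_i'y_i'$ with $x_i'\in\widetilde x(i)$ and $y_i'\in\widetilde y(i)$, and define things using $y_i$ through a left multiplication.

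The cleanest definition is
\[
A(i,j)=x_i\,y_i a(j) y_i^{-1},\qquad B(i,j)=y_i\,b(j),\qquad C(i,j)=x_iy_i\,c(j),
\]
which matches $A(i,j)B(i,j)=x_iy_ia(j)b(j)=C(i,j)$. Here $x_i$ and $y_i$ are chosen with $x_iy_i=z_i$. As $j$ varies, $B(i,\cdot)$ is a bijection onto $y_iH$, and $C(i,\cdot)$ is a bijection onto $z_iH=\widetilde z(i)$. Since the $\widetilde y(i)$ and $\widetilde z(i)$ range over all of $S/H$, the maps $B$ and $C$ are bijections $I\times J\to S$. For $A$, the values $A(i,\cdot)$ run over $x_iy_iHy_i^{-1}=z_iHy_i^{-1}$, a left coset of the conjugate $y_iHy_i^{-1}$ rather than of $H$. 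This is where the right-transversal hypothesis enters. The set $\{A(i,j)\}$ equals $\bigcup_i z_iHy_i^{-1}$, and its inverse is $\bigcup_i y_iHz_i^{-1}$. We therefore need the cosets $z_iHy_i^{-1}$ to be pairwise disjoint with union $S$.

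So $x_i$ must be chosen so that $x_iy_i\in\widetilde z(i)$ and $z_iHy_i^{-1}$ is the right translate $x_i(y_iHy_i^{-1})$. The key step, which I expect to be the main obstacle, is to exploit the symmetric form: replace $A$ by $A(i,j)=x_i\,(y_ia(j)y_i^{-1})$ and use the fact that $\{y_i\}$ is a right transversal. This gives $S=\dot\bigcup_i Hy_i$, and hence $S=\dot\bigcup_i y_i^{-1}\cdot(\cdot)$. The plan is to reorganize by choosing $x_i\in\widetilde x(i)$ with $x_iy_iH=\widetilde z(i)$, and to choose representatives of $\widetilde x(i)$ of the form $x_i=g_iy_i^{-1}$. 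Then $A$ ranges over $g_iHy_i^{-1}$, and bijectivity of $A$ reduces, via the double transversal property, to the statement that $i\mapsto g_iH$ is a bijection. Checking this compatibility is the step to work out carefully, using that $S$ is a union of both left and right cosets and hence $|S/H|=|H\backslash S|$.

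Finally, the supplement is immediate from the formulas. If $a(j_0)=b(j_0)=c(j_0)=1$, then $A(i,j_0)=x_i$, $B(i,j_0)=y_i$ and $C(i,j_0)=x_iy_i=z_i$. So whenever $x_i$ and $z_i$ are prescribed with $z_i=x_iy_i$, we simply use those choices in the construction above.
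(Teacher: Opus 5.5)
\textbf{There is a genuine gap: your $A$ is not a bijection, and the proposed repair does not fix it.} Your $B$ and $C$ are fine. The problem is $A(i,j)=x_iy_ia(j)y_i^{-1}$.

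\emph{Why the repair fails.} The set $A(i,J)$ equals $x_iy_iHy_i^{-1}=z_iHy_i^{-1}$. This depends only on the coset $\widetilde z(i)$ and on $y_i$, so no re-choice of $x_i$ changes it. Writing $x_i=g_iy_i^{-1}$ is vacuous: $g_i=z_i$, so $i\mapsto g_iH=\widetilde z(i)$ is automatically a bijection. Nothing forces the sets $z_iHy_i^{-1}$ to be disjoint.

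\emph{Counterexample.} Let $G\cong A_4$ be the group of affine maps $x\mapsto\omega^kx+v$ of $\F_4$, where $\omega^2+\omega+1=0$. Let $H=\langle\sigma\rangle$ with $\sigma(x)=\omega x$, let $S=G$, and let $t_v(x)=x+v$. The translations form a normal complement to $H$, hence a double transversal. Take $I=\F_4$, $J=H$ and $(a,b,c)=(h,h,h^2)$. Set $\widetilde x(v)=t_vH$, $\widetilde y(v)=t_{\omega v}H$, $\widetilde z(v)=t_{\omega^2v}H=t_vt_{\omega v}H$, $y_v=t_{\omega v}$ and $x_v=t_v$. Since $t_u\sigma t_u^{-1}(x)=\omega x+(1+\omega)u$ and $(1+\omega)\omega=1$ in $\F_4$, you get $A(v,\sigma)=t_vt_{\omega v}\sigma t_{\omega v}^{-1}=\sigma$ for all four $v$. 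So $A$ is not injective.

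\textbf{The missing idea} is to move the twist into the $B$-coordinate using the right transversal property. Write uniquely $y_ia(j)=d(i,j)y_{r(i,j)}$ with $d(i,j)\in H$ and $r(i,j)\in I$. Then set $A(i,j)=x_id(i,j)$, $B(i,j)=y_{r(i,j)}b(j)$ and $C(i,j)=z_ic(j)$.
\begin{itemize}
\item[] $A(i,j)B(i,j)=x_iy_ia(j)b(j)=C(i,j)$.
\item[] $A(i,J)\subseteq x_iH$.
\item[] Injectivity of $A$: from $d(i,j)^{-1}y_i=y_{r(i,j)}a(j)^{-1}$ and the left transversal property of $\{y_i\}$.
\item[] Injectivity of $B$: first match $r$ and $j$, then use the right transversal property to get $i=i'$.
\item[] The supplement survives, because $a(j_0)=1$ gives $d(i,j_0)=1$ and $r(i,j_0)=i$.
\end{itemize}
Your formula is exactly the special case where every $y_i$ normalizes $H$, as in Lemma~\ref{lem:normal-extension}. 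That is why the analogy misled you.

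\textbf{Minor point.} The choice of $x_i,z_i$ is not actually subtle. Take $x\in\widetilde x(i)$ and $u\in\widetilde y(i)$ with $xu\in\widetilde z(i)$. Keep $x_i:=x$ and put $z_i:=xy_i$; then $z_iH=xuH=\widetilde z(i)$. You tried to fix $z$ and move $x$, which is the wrong direction.
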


\begin{proof}
For each $i\in I$ for which $x_i$ and $z_i$ have not been prescribed, choose $w_i\in\widetilde z(i)$. Since $\widetilde z(i)\subseteq\widetilde x(i)\widetilde y(i)$, there exist $x_i\in\widetilde x(i)$ and $u_i\in\widetilde y(i)$ such that $w_i=x_iu_i$. Since $u_i,y_i\in\widetilde y(i)$, we have $u_iH=y_iH=\widetilde y(i)$, and hence $x_i\widetilde y(i)=x_iu_iH=w_iH=\widetilde z(i)$. Set $z_i:=x_iy_i$. Then $z_iH=x_iy_iH=x_i\widetilde y(i)=\widetilde z(i)$. For an index $i$ for which $x_i$ and $z_i$ are prescribed, we use the prescribed representatives. Since $z_i=x_iy_i$, we again have $x_i\widetilde y(i)=x_iy_iH=z_iH=\widetilde z(i)$. Thus, for every $i\in I$,
\[
x_iH=\widetilde x(i),\quad z_iH=\widetilde z(i).
\]
Since $\widetilde x,\widetilde z:I\longrightarrow S/H$ are bijections, the sets $\{x_i:i\in I\}$ and $\{z_i:i\in I\}$ are transversals for the left $H$-cosets contained in $S$.

For each $(i,j)\in I\times J$, since $\{y_i:i\in I\}$ is a transversal for the right $H$-cosets contained in $S$, there exist unique $d(i,j)\in H$, $r(i,j)\in I$ such that $y_ia(j)=d(i,j)y_{r(i,j)}$. Define
\begin{align*}
A(i,j)= x_i d(i,j),\quad B(i,j)= y_{r(i,j)}b(j),\quad C(i,j)= z_i c(j).
\end{align*}
Then $A(i,j)B(i,j)= x_i d(i,j)y_{r(i,j)}b(j)= x_i y_i a(j)b(j)= z_i c(j)= C(i,j)$.

We now show that $A,B,C$ are bijections. Since $|I\times J| = |S/H| |H| = |S|$, it suffices to prove injectivity. Suppose first that $A(i,j)=A(i',j')$. Since $\{x_i:i\in I\}$ is a transversal for the left $H$-cosets in $S$, we obtain $i=i'$, $d(i,j)=d(i,j')$. From
\[
y_{r(i,j)}a(j)^{-1} = d(i,j)^{-1}y_i = d(i,j')^{-1}y_i = y_{r(i,j')}a(j')^{-1},
\]
and the fact that $\{y_i:i\in I\}$ is also a transversal for the left $H$-cosets, we obtain $r(i,j)=r(i,j')$ and then $a(j)=a(j')$. Since $a$ is injective, $j=j'$. Thus $A$ is injective. Next suppose that $B(i,j)=B(i',j')$. Using the left $H$-coset transversal property of $\{y_i\}$, we obtain $r(i,j)=r(i',j')$, $b(j)=b(j')$. Since $b$ is injective, $j=j'$, and so $a(j)=a(j')$. Consequently,
\[
d(i,j)^{-1}y_i = y_{r(i,j)}a(j)^{-1} = y_{r(i',j')}a(j')^{-1} = d(i',j')^{-1}y_{i'}.
\]
The two sides belong respectively to the right cosets $Hy_i$ and $Hy_{i'}$. Since $\{y_i:i\in I\}$ is a transversal for the right $H$-cosets, it follows that $i=i'$. Thus $B$ is injective. Finally, if $C(i,j)=C(i',j')$, then the left $H$-coset transversal property of $\{z_i\}$ gives $i=i'$, $c(j)=c(j')$. Since $c$ is injective, $j=j'$. Thus $C$ is injective. Hence $A,B,C$ are bijections and form a complete mapping triple on $S$.

Finally, suppose that there exists $j_0\in J$ such that $a(j_0)=b(j_0)=c(j_0)=1$. Then $y_i a(j_0)=y_i$, so the uniqueness of the decomposition $y_i a(j)=d(i,j)y_{r(i,j)}$ gives $d(i,j_0)=1$, $r(i,j_0)=i$. Therefore $A(i,j_0)=x_i$, $B(i,j_0)=y_i$, $C(i,j_0)=z_i$.
\end{proof}

Suppose that $G=KH$ is an exact factorization, that is, $G=KH$, $K\cap H=1$. Lemma \ref {lem:local-wilcox} allows us to extend complete mappings along an exact factorization while preserving prescribed values on one factor.

\begin{lem}\label{lem:exact-factorization}
Suppose that $G=KH$ is an exact factorization. If $K$ admits a complete mapping and $\alpha$ is a complete mapping of $H$, then there exists a complete mapping $\theta$ of $G$ such that $\theta|_H=\alpha$. In particular, if $P\leq H$ and $\alpha(P)=P$, then $\theta(P)=P$.
\end{lem}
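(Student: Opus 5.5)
The plan is to apply Lemma~\ref{lem:local-wilcox} with $S=G$ and the subgroup $H$. The main point is to choose the coset data so that the triple $(A,B,C)$ it produces restricts to $\alpha$ on $H$.

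\textbf{Coset data.} Since $G=KH$ is an exact factorization, $K$ is a transversal for the left $H$-cosets in $G$. Also $G=G^{-1}=HK^{-1}$ with $H\cap K^{-1}=1$, so every $g$ has a unique expression $g=hk^{-1}$. Hence $K^{-1}$ is a transversal for the right $H$-cosets.

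Let $\beta$ be a complete mapping of $K$. Take $I=K$ and define
\[
\widetilde x(k)=kH,\qquad \widetilde y(k)=\beta(k)H,\qquad \widetilde z(k)=k\beta(k)H .
\]
Each of these is a bijection $K\to G/H$, because $k\mapsto k$, $k\mapsto\beta(k)$ and $k\mapsto k\beta(k)$ are permutations of $K$. The containment $\widetilde z(k)\subseteq\widetilde x(k)\widetilde y(k)$ is clear.

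\textbf{The main obstacle: the $y_k$.} We need representatives $y_k\in\beta(k)H$ such that $\{y_k\}$ is simultaneously a left and a right transversal of $H$.
\begin{itemize}
\item The naive choice $y_k=\beta(k)$ gives $K$, which is a left transversal but in general not a right one.
\item I would try $y_k$ to be the unique element of $K^{-1}$ lying in $\beta(k)H$.
\end{itemize}
This requires every left coset $gH$ to meet $K^{-1}$ in exactly one element. Inverting, that is the statement that every right coset $Hg^{-1}$ meets $K$ exactly once, i.e.\ that $K$ is also a right transversal. This follows from $G=HK$, which holds because $G=G^{-1}=H^{-1}K^{-1}$ can be rewritten by inverting $G=KH$ elementwise; more carefully, $|HK|=|H||K|=|G|$ since $H\cap K=1$.

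With this, $K^{-1}$ is a left transversal (the preceding argument) and a right transversal (the first observation above). So $\{y_k\}=K^{-1}$ satisfies the hypotheses of Lemma~\ref{lem:local-wilcox}.

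\textbf{Prescribing the identity coset.} Write $a=\mathrm{id}_H$, $b=\alpha$, $c(h)=h\alpha(h)$ on $J=H$. These form a complete mapping triple on $H$. By construction $\theta|_H=\alpha$ means $B(i,j)=\alpha(A(i,j))$ whenever $A(i,j)\in H$.

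Without loss of generality $\beta(1)=1$, replacing $\beta$ by $k\mapsto\beta(k)\beta(1)^{-1}$ if necessary. Then for $i=1$ the cosets are $\widetilde x(1)=\widetilde y(1)=\widetilde z(1)=H$, the representative is $y_1=1$, and we prescribe $x_1=z_1=1$.

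From the construction in the proof of Lemma~\ref{lem:local-wilcox}, with $y_1=1$ we get $1\cdot h=d\,y_r$. Since $y_r\in K^{-1}$ and $H\cap K^{-1}=1$, this forces $r=1$ and $d=h$. Therefore
\[
A(1,h)=h,\qquad B(1,h)=\alpha(h),\qquad C(1,h)=h\alpha(h).
\]
Since $A$ is a bijection and $A(1,\cdot)$ already covers $H$, we conclude $\theta=B\circ A^{-1}$ agrees with $\alpha$ on $H$.

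The statement about $P$ then follows immediately: if $P\leq H$ and $\alpha(P)=P$, then $\theta(P)=\alpha(P)=P$.

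I do not need the extra $j_0$ hypothesis of Lemma~\ref{lem:local-wilcox} except for fixing $i=1$; the direct computation above handles the whole row $i=1$.
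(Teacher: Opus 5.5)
Your proof is correct, but it assigns the roles in Lemma~\ref{lem:local-wilcox} the opposite way from the paper.

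\textbf{The paper's route.} The paper takes $K$ as the subgroup and $I=H$, with coset maps $h\mapsto hK$, $\alpha(h)K$, $h\alpha(h)K$. The inner triple is $(\mathrm{id},\beta,k\mapsto k\beta(k))$ on $K$, normalized so that $\beta(1_K)=1_K$. Then $y_h=\alpha(h)$ runs over $H$, which is a two-sided transversal of $K$ because $G=KH=HK$. The prescribed-representatives clause of Lemma~\ref{lem:local-wilcox}, with $j_0=1_K$, gives $A(h,1_K)=h$ and $B(h,1_K)=\alpha(h)$ directly. So the paper uses that lemma purely as stated and reads $\alpha$ off along a column.

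\textbf{Your route.} You take $H$ as the subgroup with inner triple $(\mathrm{id},\alpha,h\mapsto h\alpha(h))$, and read $\alpha$ off along the row $i=1$. The cost is that this inner triple has no index $j_0$ with $a(j_0)=b(j_0)=c(j_0)=1$ unless $\alpha(1)=1$. Since $\alpha$ is prescribed, it cannot be normalized the way $\beta$ can. You therefore cannot cite the extra clause of Lemma~\ref{lem:local-wilcox} as stated, and must work inside its proof. You do this correctly: you prescribe $x_1=z_1=1$, which the construction permits without any $j_0$, and you compute $r(1,h)=1$ and $d(1,h)=h$. The paper's assignment avoids opening the lemma by giving the freely normalizable $\beta$ the inner role.

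\textbf{A correction.} Your remark that $K$ is ``in general not'' a right transversal of $H$ is false. Your own count $|HK|=|H||K|=|G|$ shows that $G=HK$ is exact, so $K$ is a right transversal. Hence the naive choice $y_k=\beta(k)$ already works: $\{y_k\}=K$, $y_1=1$, and the same computation applies, since $h=d\,y_r$ with $y_r\in K\cap H=1$. The passage to $K^{-1}$ is a harmless but unnecessary detour.
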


\begin{proof}
Let $\alpha$ be the prescribed complete mapping of $H$, and let $\beta$ be a complete mapping of $K$. Define $\beta'(k)=\beta(k)\beta(1_K)^{-1}$. It is straightforward to verify that $\beta'$ is a complete mapping of $K$, and that $\beta'(1_K)=1_K$. Hence, without loss of generality, we may assume that $\beta(1_K)=1_K$. Then
\[
a(k)=k,\quad b(k)=\beta(k),\quad c(k)=k\beta(k)
\]
define a complete mapping triple of $K$, with $a(1_K)=b(1_K)=c(1_K)=1_K$.

Since $G=KH$ is an exact factorization, every element of $G$ has a unique expression $kh$ with $k\in K$ and $h\in H$. Hence $H$ is a transversal for the right $K$-cosets of $G$. Similarly, $G=HK$ is an exact factorization, so $H$ is also a transversal for the left $K$-cosets. Define maps $\widetilde x,\widetilde y,\widetilde z:H\longrightarrow G/K$ by
\[
\widetilde x(h)=hK,\quad\widetilde y(h)=\alpha(h)K,\quad\widetilde z(h)=h\alpha(h)K .
\]
Since $H$ is a transversal for the left $K$-cosets of $G$, the map $H\longrightarrow G/K$, $h\mapsto hK$ is a bijection. Moreover, the maps $h\mapsto h$, $h\mapsto\alpha(h)$, $h\mapsto h\alpha(h)$ are permutations of $H$. Hence $\widetilde x,\widetilde y,\widetilde z$ are bijections from $H$ onto $G/K$. Furthermore, $h\alpha(h)K\subseteq (hK)(\alpha(h)K)$, and therefore $\widetilde z(h)\subseteq\widetilde x(h)\widetilde y(h)$ for every $h\in H$.

Choose the representatives $x_h=h\in\widetilde x(h)$, $y_h=\alpha(h)\in\widetilde y(h)$, $z_h=h\alpha(h)\in\widetilde z(h)$. Then $z_h=x_hy_h$. Since $a(1_K)=b(1_K)=c(1_K)=1_K$, the additional assertion of Lemma~\ref{lem:local-wilcox}, with $J=K$ and $j_0=1_K$, yields a complete mapping triple $(A,B,C)$ of $G$ such that
\[
A(h,1_K)=x_h,\quad B(h,1_K)=y_h,\quad C(h,1_K)=z_h,\quad h\in H.
\]
Let $\theta=B\circ A^{-1}$ be the corresponding complete mapping of $G$. Since $A(h,1_K)=h$, $A^{-1}(h)=(h,1_K)$. Therefore, $\theta(h)=B(h,1_K)=\alpha(h)$ for every $h\in H$. Hence $\theta|_H=\alpha$. In particular, if $P\leq H$ and $\alpha(P)=P$, then $\theta(P)=\alpha(P)=P$.
\end{proof}

We next record a useful consequence of Lemma~\ref{lem:local-wilcox} for a single double coset.

\begin{lem}\label{lem:single-double-coset}
Let $H\leq G$ admit a complete mapping, and let $D=HgH$ be an $H$-double coset. If $D\subseteq D^2$, then $D$ admits a complete mapping triple.
\end{lem}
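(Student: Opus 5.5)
We apply Lemma~\ref{lem:local-wilcox} with $S=D$, using the complete mapping of $H$ and index set $I=D/H$. The double coset $D=HgH$ is a union of left $H$-cosets and also a union of right $H$-cosets, so it is an admissible choice of $S$. Let $a,b,c:J\to H$ be a complete mapping triple of $H$, for instance $a=\mathrm{id}_H$, $b=\alpha$ and $c(h)=h\alpha(h)$ for a complete mapping $\alpha$ of $H$. It remains to produce three bijections $\widetilde x,\widetilde y,\widetilde z:I\to D/H$ with $\widetilde z(i)\subseteq\widetilde x(i)\widetilde y(i)$. We also need representatives $y_i\in\widetilde y(i)$ that form a simultaneous transversal of the left and right $H$-cosets in $D$.

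\emph{Choice of $\widetilde y$ and $y_i$.} Take $\widetilde y$ to be the identity on $I=D/H$. The left cosets in $D$ are the sets $hgH$ and the right cosets are the sets $Hgh'$. Both families are indexed by cosets of $K=H\cap gHg^{-1}$, respectively $H\cap g^{-1}Hg$, which have the same order. We seek elements $y_i\in\widetilde y(i)$ that meet each left coset and each right coset exactly once. Consider the bipartite graph whose two vertex sets are the left and right $H$-cosets in $D$, with an edge for each element of $D$. This graph is $|H|$-regular on both sides, since each coset has $|H|$ elements and each element lies in exactly one coset of each kind. By Hall's theorem (equivalently, K\"onig's theorem) the graph has a perfect matching. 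A perfect matching is exactly a set of elements $y_i$ forming a common transversal, and we index it so that $y_iH=\widetilde y(i)$.

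\emph{Choice of $\widetilde x,\widetilde z$.} Take $\widetilde x=\widetilde z=\mathrm{id}_I$ as well, so the condition to verify is $\widetilde z(i)\subseteq \widetilde x(i)\widetilde y(i)$, i.e.\ $uH\subseteq uHuH$ for every coset $uH\subseteq D$. We have $uHuH\supseteq u\,(HuH)=uD$. Since $u\in D=HgH$, write $u=h_1gh_2$; then $uD=h_1gh_2HgH=h_1gHgH$. The hypothesis $D\subseteq D^2$ gives $g\in D^2=HgHgH$, hence $g\in h gHgH$ for some $h\in H$, and so $h^{-1}g\in gHgH$. This shows only that $gHgH$ meets $HgH$, whereas we need $uH\subseteq uHuH$, i.e.\ $H\subseteq HuH$ after left multiplication by $u^{-1}$. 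That is false in general when $u\notin H$, so the identity choice fails, and this is the main obstacle.

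\emph{Fallback, and where the hypothesis is used.} Instead I would allow $\widetilde x$ to be a nontrivial permutation of $D/H$, with $\widetilde y=\widetilde z=\mathrm{id}$. We then need a bijection $\widetilde x$ with $uH\subseteq\widetilde x(uH)\,uH$, i.e.\ $\widetilde x(uH)\cap uHu^{-1}\neq\emptyset$; indeed $uH\subseteq vH\,uH$ iff $u\in vHu$, i.e.\ $v\in uHu^{-1}$. Define the bipartite graph joining $uH$ to every coset $vH\subseteq D$ that meets $uHu^{-1}$. The hypothesis $D\subseteq D^2$ is used here. It guarantees that $uHu^{-1}\cap D\neq\emptyset$ for some representative: $g\in HgHgH$ gives $g=h_1gh_2gh_3$, so $g h_3^{-1}g^{-1}\cdots$, and one can conjugate to get an element of $uHu^{-1}$ lying in $D$. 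The graph is invariant under left multiplication by $H$, which permutes $D/H$ transitively on orbits. I expect a regularity count to show it is regular: for fixed $u$, the number of $v\in D\cap uHu^{-1}$ is $|D\cap uHu^{-1}|$, and this is independent of $u\in D$ up to conjugation. Hall's theorem then again gives the bijection $\widetilde x$. Making this count rigorous, i.e.\ checking that the graph is regular (or at least satisfies Hall's condition) on both sides, is the step I expect to require the most care. Once it holds, Lemma~\ref{lem:local-wilcox} yields the complete mapping triple on $D$.
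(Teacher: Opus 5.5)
Your first attempt ($\widetilde x=\widetilde y=\widetilde z=\mathrm{id}$) is correctly discarded. Your fallback is a valid route that differs from the paper's, and the two steps you leave open do close.

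First, a small slip. The equivalence is $uH\subseteq vH\,uH$ iff $u\in vHuH$ iff $v\in uHu^{-1}H$, not ``$u\in vHu$''. The criterion you then use, $vH\cap uHu^{-1}\neq\varnothing$, is nevertheless the right one.

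\emph{Nonemptiness.} From $g=h_1gh_2gh_3$ you get $gh_3^{-1}g^{-1}=h_1gh_2\in D\cap gHg^{-1}$. Hence $gH$ is joined to $h_1gH$.

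\emph{Regularity.} Suppose $vk=uk'u^{-1}$ with $k,k'\in H$. Then for $h\in H$ we have $(hv)(kh^{-1})=(hu)k'(hu)^{-1}$. So the edge relation is invariant under the diagonal left action of $H$. Since $H$ is transitive on $D/H$, all left degrees are equal and all right degrees are equal. Both sides have $|D/H|$ vertices, so counting edges shows the two common values coincide, and by the previous step they are positive. A regular bipartite graph of positive degree has a perfect matching, and this gives your bijection $\widetilde x$.

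Note that the degree counts cosets $vH$, not elements of $D\cap uHu^{-1}$, so the quantity $|D\cap uHu^{-1}|$ you mention is not the right one. The invariance argument makes any such count unnecessary.

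Your transversal step is correct but heavier than needed. Every left coset $agH\subseteq D$ meets every right coset $Hgb\subseteq D$, since both contain $agb$. So any bijection between the two families yields a common transversal, which is exactly what the paper does.

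As for the comparison, the paper fixes one factorisation $z=xy$ with $x,y,z\in D$. It takes a common left transversal $T$ of the conjugate subgroups $H\cap xHx^{-1}$ and $H\cap zHz^{-1}$, quoting Wilcox's Lemma~13. It then sets $\widetilde x(t)=txH$, $\widetilde y(t)=\mu(t)yH$, $\widetilde z(t)=tzH$, and the inclusion follows from $tz=(tx\mu(t)^{-1})(\mu(t)y)$.

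You instead take $\widetilde y=\widetilde z=\mathrm{id}$ and put all the combinatorics into a single perfect matching on an $H$-invariant regular bipartite graph on $D/H$. Your version is self-contained, needing only K\"onig's theorem for regular bipartite graphs, and it makes transparent that $D\subseteq D^2$ is equivalent to $D\cap gHg^{-1}\neq\varnothing$. The paper's version instead gives explicit formulas for the three coset maps once $T$ and $\mu$ are fixed.
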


\begin{proof}
Since $D\subseteq D^2$, we may choose $x,y,z\in D$ such that $z=xy$. Put
\[
H_x:=H\cap xHx^{-1},\quad H_y:=H\cap yHy^{-1},\quad H_z:=H\cap zHz^{-1}.
\]
Since $x,y,z$ lie in the same $H$-double coset $D$, the subgroups $H_x,H_y,H_z$ are conjugate in $H$, and hence $[H:H_x]=[H:H_y]=[H:H_z]$. Indeed, if $y=h_1xh_2$ with $h_1,h_2\in H$, then $H_y=H\cap yHy^{-1}=h_1(H\cap xHx^{-1})h_1^{-1}=h_1H_xh_1^{-1}$.

By \cite[Lemma~13]{Wilcox}, there exists a subset $T\subseteq H$ which is simultaneously a left transversal for $H_x$ and $H_z$ in $H$. Choose a left transversal $T_y\subseteq H$ for $H_y$ in $H$. Since $[H:H_x]=[H:H_y]=[H:H_z]$, we may fix a bijection $\mu:T\longrightarrow T_y$. For each $t\in T$, define
\[
\widetilde x(t):=txH,\quad \widetilde y(t):=\mu(t)yH,\quad \widetilde z(t):=tzH.
\]
Since $T$ is a left transversal for both $H_x$ and $H_z$ in $H$, the maps $t\mapsto txH$ and $t\mapsto tzH$ are bijections from $T$ onto the set of left $H$-cosets contained in $D$. Similarly, since $T_y$ is a left transversal for $H_y$ in $H$ and $\mu:T\to T_y$ is a bijection, the map $t\mapsto \mu(t)yH$ is also a bijection onto the same set. Hence $\widetilde x,\widetilde y,\widetilde z$ are bijections from $T$ onto the set of left $H$-cosets contained in $D$.

We next show that $\widetilde z(t)\subseteq\widetilde x(t)\widetilde y(t)$. Since $z=xy$, for every $t\in T$,
\[
tz=txy=(tx\mu(t)^{-1})(\mu(t)y).
\]
Here $tx\mu(t)^{-1}\in txH=\widetilde x(t)$ and $\mu(t)y\in \mu(t)yH=\widetilde y(t)$. Hence $tz\in \widetilde x(t)\widetilde y(t)$. It follows that $\widetilde z(t)=tzH\subseteq\widetilde x(t)\widetilde y(t)$.

Next we choose representatives $y_t\in\widetilde y(t)$, $t\in T$, which simultaneously form a transversal for the left and right $H$-cosets contained in $D$. Let $\mathcal L$ and $\mathcal R$ denote, respectively, the sets of left and right $H$-cosets contained in $D$. Since $|D|=|\mathcal L||H|=|\mathcal R||H|$, we have $|\mathcal L|=|\mathcal R|$. Moreover, every left $H$-coset contained in $D$ has the form $agH$ for some $a\in H$, and every right $H$-coset contained in $D$ has the form $Hgb$ for some $b\in H$. Hence $agb\in agH\cap Hgb$. Thus every member of $\mathcal L$ has nonempty intersection with every member of $\mathcal R$.

Since $\widetilde y:T\longrightarrow\mathcal L$ is a bijection and $|\mathcal L|=|\mathcal R|$, choose a bijection $t\mapsto R_t$ from $T$ onto $\mathcal R$. By the preceding observation, $\widetilde y(t)\cap R_t\neq\varnothing$ for every $t\in T$. Thus for each $t\in T$, we can choose $y_t\in \widetilde y(t)\cap R_t$. Then $\{y_t:t\in T\}$ is simultaneously a transversal for the left and right $H$-cosets contained in $D$. Thus all the hypotheses of Lemma~\ref{lem:local-wilcox} are satisfied, and hence $D$ admits a complete mapping triple.
\end{proof}

The following observation allows us to patch together complete mapping triples on disjoint subsets.

\begin{lem}\label{lem:patching}
Let $S$ be a finite subset of a group $G$ with a disjoint decomposition $S=S_1\mathbin{\dot\cup}\cdots\mathbin{\dot\cup}S_t$. If each $S_i$ admits a complete mapping triple, then $S$ admits a complete mapping triple.
\end{lem}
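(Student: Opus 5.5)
The plan is to concatenate the index sets and to define the combined triple piecewise. For each $1\leq k\leq t$, let $(A_k,B_k,C_k)$ be a complete mapping triple on $S_k$ with index set $J_k$; that is, $A_k,B_k,C_k:J_k\longrightarrow S_k$ are bijections with $A_k(j)B_k(j)=C_k(j)$ for all $j\in J_k$. Replacing each $J_k$ by $\{k\}\times J_k$ if necessary, we may assume that the index sets are pairwise disjoint. Put $J:=J_1\mathbin{\dot\cup}\cdots\mathbin{\dot\cup}J_t$, and define $A,B,C:J\longrightarrow S$ by setting $A(j)=A_k(j)$, $B(j)=B_k(j)$ and $C(j)=C_k(j)$ whenever $j\in J_k$.

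Next I will verify the two required properties. The multiplicative relation $A(j)B(j)=C(j)$ holds for every $j\in J$, because it holds inside each block. For bijectivity, take $A$ as the representative case. Each $A_k$ maps $J_k$ bijectively onto $S_k$, and the sets $S_k$ are pairwise disjoint with union $S$. Hence the combined map sends the disjoint union of the $J_k$ bijectively onto the disjoint union of the $S_k$, which is $S$. The same argument applies verbatim to $B$ and $C$. Therefore $(A,B,C)$ is a complete mapping triple on $S$ with index set $J$.

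There is no real obstacle in this argument. The only point needing care is making the index sets disjoint, and tagging each $J_k$ by $k$ handles this. The disjointness of the decomposition $S=S_1\mathbin{\dot\cup}\cdots\mathbin{\dot\cup}S_t$ is precisely what is needed for the combined maps to be injective.
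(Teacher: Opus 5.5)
Your proposal is correct and follows the paper's proof: tag the index sets to make them disjoint, take the disjoint unions of the maps $A_k,B_k,C_k$, and use the disjointness of the $S_k$ to get bijectivity, with $A(j)B(j)=C(j)$ holding blockwise. Nothing is missing.
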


\begin{proof}
For each $i$, let $(A_i,B_i,C_i)$ be a complete mapping triple of $S_i$, indexed by $I_i$. Replacing the $I_i$ by disjoint copies if necessary, set $I=I_1\mathbin{\dot\cup}\cdots\mathbin{\dot\cup}I_t$, and define $A,B,C:I\longrightarrow S$ by taking the disjoint unions of the maps $A_i,B_i,C_i$, respectively. Since the sets $S_i$ are pairwise disjoint, $A,B,C$ are bijections, and $A(u)B(u)=C(u)$ holds on each $I_i$. Hence $(A,B,C)$ is a complete mapping triple of $S$.
\end{proof}

\section{Lifting constructions in linear groups}\label{sec:linear-lifting}

In this section, we apply the general constructions of the preceding section to subgroups arising from decompositions and flags of a vector space.

We first establish a lifting construction between two flag stabilizers that differ by a single refinement. We then establish a second lifting construction for the case where $V$ is decomposed into subspaces, two of which have the same dimension.

Throughout this section, let $q$ be a prime power and let $V$ be a finite-dimensional $\F_q$-vector space. Let
\[
Z\leq Z(\SL(V)),\quad G:=\SL(V)/Z,
\]
and let
\[
\pi:\SL(V)\longrightarrow G
\]
be the natural quotient map. Since every element of $Z$ is a scalar transformation and hence fixes every subspace of $V$, the natural action of $\SL(V)$ on the subspaces of $V$ induces an action of $G$.

\subsection{Lifting along flags}\label{subsec:flag-lifting}

We begin with a simple geometric observation about subspaces.

\begin{lem}\label{lem:grassmann-triangle}
Let $E,F\leq V$ be $k$-dimensional subspaces such that $\dim(E\cap F)=k-j$. Then there exists a $k$-dimensional subspace $W\leq V$ such that $\dim(E\cap W)=\dim(W\cap F)=k-j$.
\end{lem}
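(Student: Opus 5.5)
We will build $W$ directly from bases adapted to $E$ and $F$. The case $j=0$ is trivial: then $E=F$, and we take $W=E$. So assume $j\geq1$.

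Put $C:=E\cap F$, which has dimension $k-j$. Choose complements $E=C\oplus E'$ and $F=C\oplus F'$, with $\dim E'=\dim F'=j$. Since $E\cap F=C$, the sum $C+E'+F'=E+F$ is direct, and it has dimension $k+j$. Fix bases $e_1,\dots,e_j$ of $E'$ and $f_1,\dots,f_j$ of $F'$. The natural candidate is
\[
W:=C\oplus\langle e_1+f_1,\dots,e_j+f_j\rangle .
\]
The vectors $e_i+f_i$ are linearly independent modulo $C$, because $C\oplus E'\oplus F'$ is direct. Hence $\dim W=k$.

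Next we compute $E\cap W$. Let $w=c+\sum_i\lambda_i(e_i+f_i)\in W$ with $c\in C$. If $w\in E=C\oplus E'$, then the $F'$-component $\sum_i\lambda_i f_i$ of $w$ in the direct sum $C\oplus E'\oplus F'$ must vanish. So all $\lambda_i=0$ and $w\in C$. Thus $E\cap W=C$, which has dimension $k-j$. The same argument with the roles of $E'$ and $F'$ exchanged gives $W\cap F=C$.

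No field-size condition is needed, and $W$ lies inside $E+F\leq V$. The only point requiring care is to confirm that the sum $C\oplus E'\oplus F'$ is direct. This follows from the dimension formula $\dim(E+F)=2k-(k-j)=k+j$, which equals $\dim C+\dim E'+\dim F'$. Everything else is a routine coordinate check.
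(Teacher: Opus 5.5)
Your proposal is correct and is essentially the paper's proof: the span of the vectors $e_i+f_i$ is exactly the graph $\Gamma_f$ of the isomorphism $f\colon E'\to F'$, $e_i\mapsto f_i$. Taking $W=C\oplus\Gamma_f$ and comparing components in $C\oplus E'\oplus F'$ is precisely the paper's argument. Your dimension count showing that this sum is direct is a small step the paper leaves implicit.
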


\begin{proof}
Put $C=E\cap F$, so that $\dim C=k-j$. Choose complements $E_1$ and $F_1$ of $C$ in $E$ and $F$, respectively, so that $E=C\oplus E_1$, $F=C\oplus F_1$. Then $\dim E_1=\dim F_1=j$, $E+F=C\oplus E_1\oplus F_1$. Choose an isomorphism $f:E_1\to F_1$, and put
\[
\Gamma_f:=\{x+f(x):x\in E_1\}\subseteq E_1\oplus F_1.
\]
Since $E_1\cap F_1=0$, the map $E_1\longrightarrow E_1\oplus F_1$, $x\mapsto x+f(x)$, is linear, and it is injective since $E_1\cap F_1=0$. Hence $\Gamma_f$ is a $j$-dimensional subspace of $E_1\oplus F_1$. Moreover, by $E+F=C\oplus E_1\oplus F_1$, we have $C\cap\Gamma_f=0$. Set $W:=C\oplus \Gamma_f$. Then $\dim W=(k-j)+j=k$.

Since $C\leq E$ and $C\leq W$, we have $C\leq E\cap W$. Conversely, let $v\in E\cap W$. Since $E=C\oplus E_1$ and $W=C\oplus \Gamma_f$, we can write
\[
v=c_1+u=c_2+x+f(x),
\]
where $c_1,c_2\in C$ and $u,x\in E_1$. Comparing the $F_1$-components in the direct sum $E+F=C\oplus E_1\oplus F_1$ gives $f(x)=0$, and hence $x=0$. Thus $v=c_2\in C$, so $E\cap W=C$. Similarly, by comparing the $E_1$-components, we obtain $W\cap F=C$. Therefore $\dim(E\cap W)=\dim(W\cap F)=k-j$.
\end{proof}

We next consider a refinement of a flag obtained by inserting a single subspace between two consecutive members $V_{i-1}<V_i$.

Let $\mathcal F:0=V_0<V_1<\cdots<V_r=V$ be a flag. Fix $1\leq i\leq r$, and choose a subspace $\widetilde U$ with $V_{i-1}<\widetilde U<V_i$. Let $\mathcal F'$ be the refinement of $\mathcal F$ obtained by inserting $\widetilde U$. Set
\[
X:=G_{\mathcal F},\quad H:=G_{\mathcal F'}.
\]

To compare $H$ with $X$, we pass to the quotient space $L:=V_i/V_{i-1}$. Write $\dim L=a+b$, and assume that $\dim(\widetilde U/V_{i-1})=a$. Put $U:=\widetilde U/V_{i-1}\leq L$. Let
\[
\operatorname{Gr}_a(L):=\{Y\leq L:\dim Y=a\}
\]
denote the set of all $a$-dimensional subspaces of $L$. We classify these subspaces according to their intersection dimension with $U$. For $0\leq j\leq\min\{a,b\}$, define
\[
\Omega_j:=\bigl\{Y\in\operatorname{Gr}_a(L):\dim(U\cap Y)=a-j\bigr\},\quad D_j:=\bigl\{x\in X:\dim(U\cap xU)=a-j\bigr\}.
\]

\begin{lem}\label{lem:adjacent-quotient-transitivity}
The group $X$ acts transitively on $\operatorname{Gr}_a(L)$. Moreover, $H=X_U:=\{x\in X:xU=U\}$, and $[X:H]=\binom{a+b}{a}_q$.
\end{lem}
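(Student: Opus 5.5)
We work first in $\SL(V)$ and then pass to $G$ through $\pi$; this is harmless because scalars act trivially on subspaces. The group $X$ preserves $V_{i-1}$ and $V_i$, so it acts linearly on $L=V_i/V_{i-1}$ and therefore on $\operatorname{Gr}_a(L)$.

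For transitivity, use the Levi decomposition of Lemma~\ref{lem:flag-Levi} for $\widehat P=\SL(V)_{\mathcal F}$. Choose complements $E_1,\dots,E_r$, and identify $E_i$ with $L$ through the projection $V_i\to L$. The Levi factor $\widehat L_{\mathcal F}$ contains every element $\diag(I,\ldots,A_i,\ldots,A_k,\ldots,I)$ with $A_i\in\GL(E_i)$ arbitrary, provided we compensate the determinant in some other block $E_k$ with $k\neq i$. The induced action of such an element on $L$ is just $A_i$, so the image of $\widehat P$ in $\GL(L)$ is all of $\GL(L)$. If $r=1$ there is no other block. In that case $L=V$ and the image is $\SL(L)$, so we use instead that $\SL(L)$ is already transitive on $a$-subspaces when $0<a<\dim L$. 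To see this, given $Y,Y'$ pick $g\in\GL(L)$ with $gY=Y'$, then multiply by $\diag(\det g^{-1},1,\ldots,1)$ in a basis whose first vector lies in $Y$; this fixes $Y$. Since $\pi(\widehat P)=X$ acts through the same action, $X$ is transitive on $\operatorname{Gr}_a(L)$.

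For $H=X_U$, an element of $X$ stabilizes $\mathcal F'$ exactly when it additionally fixes $\widetilde U$. Since $\widetilde U\geq V_{i-1}$ and $V_{i-1}$ is $x$-invariant, we have $x\widetilde U=\widetilde U$ if and only if $xU=U$ in $L$, because subspaces between $V_{i-1}$ and $V_i$ correspond bijectively to subspaces of $L$. Hence $H=X_U$. The orbit–stabilizer theorem then gives $[X:H]=|\operatorname{Gr}_a(L)|$. The standard count of $a$-subspaces of an $(a+b)$-dimensional space equals $\binom{a+b}{a}_q$: count ordered linearly independent $a$-tuples and divide by $|\GL_a(q)|$.

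The only mildly delicate step is showing that the image of $X$ in $\GL(L)$ is large enough when the determinant constraint cannot be absorbed by another block. The $\SL(L)$-transitivity argument above handles this case.
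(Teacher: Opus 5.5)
Your proposal is correct and is essentially the paper's proof: you realize the linear action on $L$ by block-diagonal elements supported on a complement of $V_{i-1}$ in $V_i$, deduce transitivity on $\operatorname{Gr}_a(L)$, identify $H$ with $X_U$ via the correspondence between subspaces of $L$ and subspaces between $V_{i-1}$ and $V_i$, and conclude by orbit--stabilizer. The only difference is that the paper avoids your case split on $r$: it always uses the $\SL(L)$-transitivity argument (which you already give for $r=1$) and extends $s\in\SL(L)$ by the identity on the other blocks, so compensating the determinant in another block is unnecessary.
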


\begin{proof}
For each $1\leq s\leq r$, choose a complement $M_s$ of $V_{s-1}$ in $V_s$. Then $V_s=M_1\oplus\cdots\oplus M_s$, $1\leq s\leq r$, and hence $V=M_1\oplus\cdots\oplus M_r$. Moreover, the map
\[
M_s\longrightarrow V_s/V_{s-1},\quad m\mapsto m+V_{s-1},
\]
is a linear isomorphism. In particular, $M_i\cong L=V_i/V_{i-1}$.

We first note that $\SL(L)$ acts transitively on $\operatorname{Gr}_a(L)$. Indeed, let $Y_1,Y_2\in\operatorname{Gr}_a(L)$. There exists $s_0\in\GL(L)$ such that $s_0(Y_1)=Y_2$. Write $\det(s_0)=\lambda\in \mathbb{F}_q^\times$. Choose a basis $v_1,\ldots,v_{a+b}$ of $L$ such that $Y_2=\langle v_1,\ldots,v_a\rangle$. Define $t\in\GL(L)$ by
\[
t(v_1)=\lambda^{-1}v_1,\quad t(v_i)=v_i,\quad 2\leq i\leq a+b.
\]
Then $t(Y_2)=Y_2$ and $\det(t)=\lambda^{-1}$. Hence $s=ts_0\in\SL(L)$ and $s(Y_1)=Y_2$. Thus $\SL(L)$ acts transitively on $\operatorname{Gr}_a(L)$.

Via $M_i\cong L$, regard $s$ as a linear transformation of $M_i$, and define
\[
\widehat s=\mathrm{id}_{M_1}\oplus\cdots\oplus\mathrm{id}_{M_{i-1}}\oplus s\oplus\mathrm{id}_{M_{i+1}}\oplus\cdots\oplus\mathrm{id}_{M_r}.
\]
Then $\widehat s\in\SL(V)$, and $\widehat s$ stabilizes every member of $\mathcal F$. Hence $\pi(\widehat s)\in X$. The lift $\widehat s$ induces $s$ on $L$, and therefore $\pi(\widehat s)$ sends each subspace $Y\leq L$ to $s(Y)$. Since $\SL(L)$ acts transitively on $\operatorname{Gr}_a(L)$, it follows that $X$ acts transitively on $\operatorname{Gr}_a(L)$.

Since $\mathcal F'$ differs from $\mathcal F$ only by the additional subspace $\widetilde U$, $H=\{x\in X:x\widetilde U=\widetilde U\}$. As every $x\in X$ stabilizes $V_{i-1}$, $xU=U\Longleftrightarrow x\widetilde U=\widetilde U$. Therefore $H=X_U$. The orbit--stabilizer theorem now gives
\[
[X:H]=\bigl|\operatorname{Gr}_a(L)\bigr|=\binom{a+b}{a}_q.\qedhere
\]
\end{proof}

\begin{lem}\label{lem:intersection-double-cosets}
With the notation above, for each $0\leq j\leq\min\{a,b\}$, the group $H$ acts transitively on $\Omega_j$, and $D_j$ is an $H$-double coset. Moreover,
\[
X=D_0\mathbin{\dot\cup}D_1\mathbin{\dot\cup}\cdots\mathbin{\dot\cup}D_{\min\{a,b\}},\quad D_0=H,\quad D_j\subseteq D_j^2.
\]
\end{lem}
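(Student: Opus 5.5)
We work throughout with the action of $X$ on $\operatorname{Gr}_a(L)$ from Lemma~\ref{lem:adjacent-quotient-transitivity}, and we use the lifting of $\SL(L)$ into $X$ constructed in its proof.

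\emph{Transitivity of $H$ on $\Omega_j$.} Let $Y_1,Y_2\in\Omega_j$. Put $C_k=U\cap Y_k$, which has dimension $a-j$. Choose bases adapted to the subspace lattice generated by $U$ and $Y_k$ as follows:
\begin{itemize}
\item a basis of $C_k$;
\item extend it by $j$ vectors to a basis of $U$, giving a complement $U'_k$ of $C_k$ in $U$;
\item extend it by $j$ vectors to a basis of $Y_k$, giving a complement $Y'_k$ of $C_k$ in $Y_k$;
\item complete $U+Y_k$ (dimension $a+j$) to a basis of $L$.
\end{itemize}
Since $a+j\le a+b$, this is possible for both $k=1,2$. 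The two resulting bases have the same shape, so the linear map sending one to the other lies in $\GL(L)$, preserves $U$, and sends $Y_1$ to $Y_2$. To correct the determinant, we rescale one basis vector of $C_1$ if $a-j>0$. Otherwise we rescale a vector of $U'_1$, or, if $a=0$, a vector outside $U$. In each case the corrected map still preserves $U$ and $Y_1\mapsto Y_2$, and it lies in $\SL(L)$. Lifting this map to $X$ as in Lemma~\ref{lem:adjacent-quotient-transitivity} gives an element of $H=X_U$.

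\emph{Double cosets.} By that lemma, $x\mapsto xU$ induces an $X$-equivariant bijection $X/H\to\operatorname{Gr}_a(L)$. For $x\in D_j$ and $h,h'\in H$, we have $hxh'U=hxU$, and $\dim(U\cap hxU)=\dim(h^{-1}U\cap xU)=\dim(U\cap xU)$. Hence $D_j$ is a union of $H$-double cosets. Conversely, take $x,x'\in D_j$. Then $xU,x'U\in\Omega_j$, so by transitivity there is $h\in H$ with $hxU=x'U$. It follows that $x'^{-1}hx\in H$, so $x'\in HxH$. Thus $D_j$ is a single double coset. It is nonempty because $\Omega_j\neq\varnothing$, which we see by taking $j$ basis vectors of $U$ and replacing them with $j$ vectors outside $U$; this uses $j\le b$.

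The sets $\Omega_j$, $0\le j\le\min\{a,b\}$, partition $\operatorname{Gr}_a(L)$, because $\dim(U\cap Y)\ge a-b$. This gives the disjoint union $X=D_0\mathbin{\dot\cup}\cdots\mathbin{\dot\cup}D_{\min\{a,b\}}$. Also $D_0=X_U=H$.

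\emph{$D_j\subseteq D_j^2$.} This is the main step. Since $D_j^2$ is a union of $H$-double cosets, it suffices to find $x,y\in D_j$ with $xy\in D_j$. Apply Lemma~\ref{lem:grassmann-triangle} inside $L$, with $E=U$ and $F$ any member of $\Omega_j$, to obtain $W$ with $\dim(U\cap W)=\dim(W\cap F)=a-j$. Thus $W\in\Omega_j$.

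Choose $x\in X$ with $xU=W$, so $x\in D_j$. Choose $g\in X$ with $gU=F$. Put $y=x^{-1}g$, so that $xy=g\in D_j$. It remains to check that $y\in D_j$. We compute
\[
\dim(U\cap yU)=\dim(xU\cap gU)=\dim(W\cap F)=a-j,
\]
so $y\in D_j$. Hence $x,y,xy\in D_j$, and therefore $D_j=HxyH\subseteq D_jD_j$.
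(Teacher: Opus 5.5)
Your proposal is correct and follows essentially the same route as the paper: adapted decompositions of $L$ with a determinant correction give transitivity of $H$ on $\Omega_j$, the orbit argument identifies each $D_j$ as a single double coset, and Lemma~\ref{lem:grassmann-triangle} gives $D_j\subseteq D_j^2$ (you reduce to exhibiting one product inside $D_j$ using the double-coset structure, whereas the paper writes each $g\in D_j$ directly as $x(x^{-1}g)$). The only point to tighten is the nonemptiness of $\Omega_j$: the $j$ replacement vectors must be linearly independent modulo $U$, for instance taken from a basis of a complement of $U$, and not merely chosen outside $U$.
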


\begin{proof}
We first prove that $H$ is transitive on $\Omega_j$. Let $Y_1,Y_2\in\Omega_j$, and put $C_\ell=U\cap Y_\ell$, $\ell=1,2$. Then $\dim C_1=\dim C_2=a-j$. Choose decompositions $U=C_\ell\oplus A_\ell$, $Y_\ell=C_\ell\oplus B_\ell$, $\ell=1,2$. Thus $\dim A_1=\dim A_2=\dim B_1=\dim B_2=j$. Choose complements $R_\ell$ such that
\[
L=C_\ell\oplus A_\ell\oplus B_\ell\oplus R_\ell.
\]
Since the corresponding summands have equal dimensions, there are linear isomorphisms
\[
\alpha:C_1\to C_2,\quad \beta:A_1\to A_2,\quad \gamma:B_1\to B_2,\quad \delta:R_1\to R_2.
\]
Hence
\[
\sigma_0:=\alpha\oplus\beta\oplus\gamma\oplus\delta\in\GL(L).
\]
Moreover, $\sigma_0(U)=\sigma_0(C_1\oplus A_1)=\alpha(C_1)\oplus\beta(A_1)=C_2\oplus A_2=U$, and $\sigma_0(Y_1)=\sigma_0(C_1\oplus B_1)=\alpha(C_1)\oplus\gamma(B_1)=C_2\oplus B_2=Y_2$.

Write $\det(\sigma_0)=\lambda\in \mathbb{F}_q^\times$. Choose a basis $v_1,\ldots,v_{a+b}$ of $L$ adapted to the direct sum decomposition $L=C_2\oplus A_2\oplus B_2\oplus R_2$. Define $t\in\GL(L)$ by
\[
t(v_1)=\lambda^{-1}v_1,\quad t(v_\ell)=v_\ell,\quad 2\leq \ell\leq a+b.
\]
Then $\det(t)=\lambda^{-1}$. Moreover, $t$ stabilizes each of $C_2,A_2,B_2,R_2$, and hence $t(U)=U$, $t(Y_2)=Y_2$. Thus
\[
\sigma:=t\sigma_0\in\SL(L),\quad\sigma(U)=U,\quad\sigma(Y_1)=Y_2.
\]
Extend $\sigma$ to an element $\widehat x\in\SL(V)$ by letting it act as the identity on all $M_s$ with $s\neq i$, as in the proof of Lemma \ref{lem:adjacent-quotient-transitivity}, and put $x:=\pi(\widehat x)\in X$. The lift $\widehat x$ induces $\sigma$ on $L$, so $x$ acts on the subspaces of $L$ by $Y\mapsto \sigma(Y)$. Since $\sigma(U)=U$, we have $xU=U$, and hence $x\in X_U=H$. Moreover, $xY_1=\sigma(Y_1)=Y_2$. Thus $H$ acts transitively on $\Omega_j$.

We next show that $D_j$ is an $H$-double coset. We first note that $D_j\neq\varnothing$. Choose a complement $U'$ of $U$ in $L$, so that $L=U\oplus U'$, $\dim U'=b$. Since $0\leq j\leq\min\{a,b\}$, there exist subspaces $C\leq U$, $B\leq U'$ with $\dim C=a-j$, $\dim B=j$. Then $Y:=C\oplus B$ has dimension $a$, and $U\cap Y=C$. Hence $Y\in\Omega_j$. Since $X$ acts transitively on $\mathrm{Gr}_a(L)$ by Lemma~\ref{lem:adjacent-quotient-transitivity}, there exists $g\in X$ such that $gU=Y$. Thus $\dim(U\cap gU)=a-j$, so $g\in D_j$. Therefore $D_j\neq\varnothing$.

Fix $g\in D_j$. For $h_1,h_2\in H$, since $h_1U=h_2U=U$,
\[
\begin{aligned}
\dim(U\cap h_1gh_2U)=\dim(U\cap h_1gU)=\dim(h_1^{-1}U\cap gU)=\dim(U\cap gU)=a-j.
\end{aligned}
\]
Thus $HgH\subseteq D_j$. Conversely, if $x\in D_j$, then $gU,xU\in\Omega_j$. By the transitivity just proved, there exists $h_1\in H$ such that $h_1gU=xU$. Hence $h_2:=x^{-1}h_1g\in X_U=H$. Therefore $x=h_1gh_2^{-1}\in HgH$. Thus $D_j\subseteq HgH$, and hence $D_j=HgH$.

We next show that these double cosets exhaust $X$. Let $x\in X$, and put $j:=a-\dim(U\cap xU)$. Then $0\leq j\leq a$. Moreover,
\[
a+j=2a-\dim(U\cap xU)=\dim(U+xU)\leq \dim L=a+b,
\]
so $j\leq b$. Hence $0\leq j\leq\min\{a,b\}$, and therefore $x\in D_j$. Thus
\[
X=D_0\mathbin{\dot\cup}D_1\mathbin{\dot\cup}\cdots\mathbin{\dot\cup}D_{\min\{a,b\}}.
\]
Since the sets $D_j$ are defined by distinct values of $\dim(U\cap xU)$, they are pairwise disjoint.

For $j=0$, $x\in D_0\Longleftrightarrow xU=U\Longleftrightarrow x\in H$, so $D_0=H$. Finally, let $g\in D_j$. By Lemma~\ref{lem:grassmann-triangle}, there exists $W\in\operatorname{Gr}_a(L)$ such that $\dim(U\cap W)=\dim(W\cap gU)=a-j$. By Lemma~\ref{lem:adjacent-quotient-transitivity}, there exists $x\in X$ such that $xU=W$. Hence $\dim(U\cap xU)=a-j$ and so $x\in D_j$. On the other hand,
\[
\dim(U\cap x^{-1}gU)=\dim(xU\cap gU)=\dim(W\cap gU)=a-j.
\]
Hence $x^{-1}g\in D_j$, and $g=x(x^{-1}g)\in D_j^2$. Thus $D_j\subseteq D_j^2$.
\end{proof}

The preceding $H$-double-coset decomposition of $X$ allows us to extend a complete mapping of $H$ to one of $X$.
\begin{pro}\label{prop:one-step-flag-lifting}
Let $X$ be the stabilizer in $G$ of a flag $\mathcal F$. Suppose that $U<W$ are two consecutive members of $\mathcal F$, and let $\mathcal F'$ be obtained from $\mathcal F$ by inserting a subspace $Y$ with $U<Y<W$. Let $H$ be the stabilizer of $\mathcal F'$, and put $a=\dim(Y/U)$, $b=\dim(W/Y)$. If $H$ has a complete mapping $\alpha$, then there exists a complete mapping $\theta$ of $X$ such that $\theta|_H=\alpha$.

Moreover, if $\binom{a+b}{a}_q$ is odd and $\alpha$ stabilizes a Sylow $2$-subgroup $P$ of $H$, then $P\in\mathrm{Syl}_2(X)$ and $\theta(P)=P$.
\end{pro}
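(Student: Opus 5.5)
The plan is to decompose $X$ into $H$-double cosets, build a complete mapping triple on each nontrivial double coset, and patch these together with a triple on $H$ that encodes $\alpha$. First we match the notation of the proposition with that of Lemmas~\ref{lem:adjacent-quotient-transitivity} and~\ref{lem:intersection-double-cosets}. We take $V_{i-1}=U$, $V_i=W$, $\widetilde U=Y$, and pass to $L=W/U$ with the $a$-dimensional subspace $Y/U$. Lemma~\ref{lem:intersection-double-cosets} then gives the disjoint decomposition
\[
X=H\mathbin{\dot\cup}D_1\mathbin{\dot\cup}\cdots\mathbin{\dot\cup}D_{\min\{a,b\}},
\]
in which each $D_j$ is a single $H$-double coset with $D_j\subseteq D_j^2$.

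Since $H$ has the complete mapping $\alpha$, Lemma~\ref{lem:single-double-coset} gives a complete mapping triple $(A_j,B_j,C_j)$ on each $D_j$ with $j\geq1$. On $D_0=H$ we use the triple $A_0(h)=h$, $B_0(h)=\alpha(h)$, $C_0(h)=h\alpha(h)$, indexed by $H$. This is a complete mapping triple on $H$ because $\alpha$ is a complete mapping of $H$. Lemma~\ref{lem:patching} glues these into a complete mapping triple $(A,B,C)$ on $X$ with index set $I=H\mathbin{\dot\cup}I_1\mathbin{\dot\cup}\cdots$. Let $\theta=B\circ A^{-1}$. For $h\in H$ the unique index $u$ with $A(u)=h$ lies in the $H$-part, since the $A_j$ for $j\ge1$ take values in $D_j$, which is disjoint from $H$. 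Hence $\theta(h)=B_0(h)=\alpha(h)$, so $\theta|_H=\alpha$.

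For the second assertion, Lemma~\ref{lem:adjacent-quotient-transitivity} gives $[X:H]=\binom{a+b}{a}_q$. If this index is odd, then $|P|$ equals the full $2$-part of $|X|$, so $P\in\mathrm{Syl}_2(X)$. Moreover, $\theta(P)=\alpha(P)=P$.

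No step here looks genuinely hard, because the work is carried by Lemmas~\ref{lem:intersection-double-cosets} and~\ref{lem:single-double-coset}. The only points to check are the following.
\begin{itemize}
\item Lemma~\ref{lem:single-double-coset} requires only that $H$ admit some complete mapping, which $\alpha$ supplies.
\item The triples on the $D_j$ do not interfere with $\theta$ on $H$, which holds by the disjointness of the double cosets.
\end{itemize}
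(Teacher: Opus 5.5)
Your proposal is correct and follows essentially the same route as the paper: decompose $X$ into the $H$-double cosets $D_0=H,D_1,\dots,D_{\min\{a,b\}}$ via Lemma~\ref{lem:intersection-double-cosets}, use the triple given by $\alpha$ on $H$ and Lemma~\ref{lem:single-double-coset} on each $D_j$ with $j\geq1$, patch with Lemma~\ref{lem:patching}, and get the Sylow assertion from $[X:H]=\binom{a+b}{a}_q$. Your remark that disjointness of the $D_j$ forces $A^{-1}(h)$ into the $H$-part is exactly what justifies the paper's statement that the patched triple agrees with $(A_0,B_0,C_0)$ on $H$.
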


\begin{proof}
By Lemma~\ref{lem:intersection-double-cosets}, $X=D_0\mathbin{\dot\cup}D_1\mathbin{\dot\cup}\cdots\mathbin{\dot\cup}D_{\min\{a,b\}}$, $D_0=H$. On $D_0=H$, use the complete mapping triple associated with $\alpha$, namely,
\[
A_0(h)=h,\quad B_0(h)=\alpha(h),\quad C_0(h)=h\alpha(h),\quad h\in H.
\]

For each $1\leq j\leq\min\{a,b\}$, Lemma~\ref{lem:intersection-double-cosets} gives $D_j\subseteq D_j^2$. Since $H$ admits a complete mapping, Lemma~\ref{lem:single-double-coset} implies that $D_j$ admits a complete mapping triple. Therefore, by Lemma~\ref{lem:patching}, the above complete mapping triples on $D_0,D_1,\ldots,D_{\min\{a,b\}}$ can be patched together to give a complete mapping triple $(A,B,C)$ on $X$. Let
\[
\theta:=B\circ A^{-1}.
\]
Then $\theta$ is a complete mapping of $X$. On $D_0=H$, the triple $(A,B,C)$ agrees with $(A_0,B_0,C_0)$. Hence, for every $h\in H$, $A(h)=h$, $B(h)=\alpha(h)$, and therefore $\theta(h)=B(A^{-1}(h))=B(h)=\alpha(h)$. Thus $\theta|_H=\alpha$.

Finally, suppose that $[X:H]=\binom{a+b}{a}_q$ is odd and that $P\in\mathrm{Syl}_2(H)$ satisfies $\alpha(P)=P$. Since $[X:H]$ is odd, $X$ and $H$ have the same $2$-part. Hence $P\in\mathrm{Syl}_2(X)$. Moreover, since $P\leq H$ and $\theta|_H=\alpha$, $\theta(P)=\alpha(P)=P$.
\end{proof}

\begin{core}\label{cor:flag-refinement-lifting}
Let $\mathcal F$ and $\mathcal F'$ be flags such that $\mathcal F'$ is obtained from $\mathcal F$ by inserting finitely many subspaces. Let $X:=G_{\mathcal F}$, $H:=G_{\mathcal F'}$. Then every complete mapping $\alpha$ of $H$ extends to a complete mapping $\theta$ of $X$. Moreover, if $[X:H]$ is odd and $\alpha$ stabilizes $P\in\mathrm{Syl}_2(H)$, then $P\in\mathrm{Syl}_2(X)$ and $\theta(P)=P$.
\end{core}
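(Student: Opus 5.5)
The plan is to induct on the number of inserted subspaces, applying Proposition~\ref{prop:one-step-flag-lifting} at each step. Since $\mathcal F'$ is obtained from $\mathcal F$ by inserting finitely many subspaces, there is a chain of flags
\[
\mathcal F=\mathcal F_0,\ \mathcal F_1,\ \ldots,\ \mathcal F_m=\mathcal F',
\]
in which each $\mathcal F_{k+1}$ comes from $\mathcal F_k$ by inserting one subspace strictly between two consecutive members of $\mathcal F_k$. To build it, list the members of $\mathcal F'$ not in $\mathcal F$ and insert them one at a time, in any order. After each insertion the current flag is a subflag of $\mathcal F'$, so it is still totally ordered by inclusion. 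Hence the next subspace lies strictly between two consecutive members of the current flag, and the chain is well defined. Put $X_k:=G_{\mathcal F_k}$. Then
\[
X=X_0\geq X_1\geq\cdots\geq X_m=H.
\]

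Next we extend $\alpha$ upward along this chain. Set $\theta_m:=\alpha$. Given a complete mapping $\theta_{k+1}$ of $X_{k+1}$, the first part of Proposition~\ref{prop:one-step-flag-lifting}, applied to $\mathcal F_k$ and its one-step refinement $\mathcal F_{k+1}$, gives a complete mapping $\theta_k$ of $X_k$ with $\theta_k|_{X_{k+1}}=\theta_{k+1}$. Restrictions compose, so $\theta:=\theta_0$ is a complete mapping of $X$ with $\theta|_H=\alpha$. This proves the first assertion.

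For the Sylow assertion, suppose $[X:H]$ is odd and $\alpha(P)=P$ for some $P\in\mathrm{Syl}_2(H)$. We do not need the individual indices $[X_k:X_{k+1}]$ to be odd, so we avoid the Gaussian binomial parity condition in the proposition. Since $\theta|_H=\alpha$ and $P\leq H$, we get $\theta(P)=\alpha(P)=P$. Since $[X:H]$ is odd, $|X|$ and $|H|$ have the same $2$-part, so $P\in\mathrm{Syl}_2(X)$.

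No step is a real obstacle. The only points needing care are that the chain of single insertions exists, which was checked above, and that the Sylow conclusion uses the total index $[X:H]$ rather than per-step parity.
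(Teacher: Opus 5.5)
Your proof is correct and follows the paper's argument: you use the same chain of single-subspace insertions and apply the first part of Proposition~\ref{prop:one-step-flag-lifting} repeatedly. The only difference is in the Sylow part, where the paper deduces that each step index $[X_{i-1}:X_i]$ is odd and applies the second assertion of the proposition step by step, whereas you conclude directly from $\theta|_H=\alpha$ and the oddness of $[X:H]$, which is slightly more economical.
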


\begin{proof}
Choose a chain of flags $\mathcal F=\mathcal F_0<\mathcal F_1<\cdots<\mathcal F_r=\mathcal F'$, where each $\mathcal F_i$ is obtained from $\mathcal F_{i-1}$ by inserting a single subspace, and let $X_i:=G_{\mathcal F_i}$. Thus
\[
H=X_r\leq X_{r-1}\leq\cdots\leq X_0=X.
\]
Starting with $\alpha$ on $X_r$, repeated application of Proposition \ref{prop:one-step-flag-lifting} yields a complete mapping $\theta$ of $X_0=X$ whose restriction to $H$ is $\alpha$.

Now suppose that $[X:H]$ is odd and that $\alpha$ stabilizes $P\in\mathrm{Syl}_2(H)$. Since $[X:H]=\prod_{i=1}^r [X_{i-1}:X_i]$ is odd, every index $[X_{i-1}:X_i]$ is odd. Hence, applying the second assertion of Proposition \ref{prop:one-step-flag-lifting} successively, $P$ remains a Sylow $2$-subgroup at every step and is stabilized by each extended complete mapping. Therefore $P\in\mathrm{Syl}_2(X)$ and $\theta(P)=P$.
\end{proof}

\subsection{Lifting via two equal-dimensional direct summands of vector space}\label{subsec:equal-block-lifting}

Recall that $\pi:\SL(V)\to G=\SL(V)/Z$ is the natural quotient map, where $Z\leq Z(\SL(V))$. Assume throughout this subsection that $q$ is odd, and let
\[
V=E_1\oplus E_2\oplus F,\quad \dim E_1=\dim E_2=m,\quad \dim F=s,
\]
where $m\geq1$ and $s\geq0$.

In this subsection, we develop a lifting construction associated with this decomposition. It extends a complete mapping from a subgroup of $G$ preserving $F$ and the pair $\{E_1,E_2\}$ setwise to a larger subgroup of $G$ preserving both $E_1\oplus E_2$ and $F$.

We first introduce two subgroups corresponding to the decompositions $V=(E_1\oplus E_2)\oplus F$ and $V=E_1\oplus E_2\oplus F$, respectively. Set
\[
\widehat X:=\bigl\{\diag(A,C)\in\SL(V):A\in\GL(E_1\oplus E_2),\;C\in\GL(F)\bigr\},\quad X:=\pi(\widehat X)
\]
and
\[
\widehat L_0:=\bigl\{\diag(A,B,C)\in\SL(V):A,B\in\GL_m(q),\;C\in\GL_s(q)\bigr\},\quad L_0:=\pi(\widehat L_0).
\]
When $s>0$, Lemma~\ref{lem:flag-Levi} shows that $\widehat X$ and $\widehat L_0$ are Levi subgroups of the stabilizers of the flags $0<E_1\oplus E_2<V$ and $0<E_1<E_1\oplus E_2<V$, respectively. When $s=0$, we have $\widehat X=\SL(V)$, while $\widehat L_0$ is a Levi subgroup of the stabilizer of $0<E_1<V$.

Since $\dim E_1=\dim E_2$, there exists an element of $\widehat X$ that exchanges $E_1$ and $E_2$. We therefore enlarge $L_0$ by adjoining such an element. Let
\[
\widehat K:=\left\{x\in\widehat X:\{xE_1,xE_2\}=\{E_1,E_2\}\right\},\ K:=\pi(\widehat K),\quad \widehat w:=
\begin{pmatrix}
0&-I_m&0\\I_m&0&0\\0&0&I_s
\end{pmatrix},\ w:=\pi(\widehat w).
\]
Then
\[
\widehat K=\widehat L_0\mathbin{\dot\cup}\widehat w\widehat L_0,\quad K=L_0\mathbin{\dot\cup}wL_0,
\]
and hence $[K:L_0]=2$.

The purpose of the construction in this subsection is to extend a prescribed complete mapping $\alpha$ of $K$ to a complete mapping $\theta$ of $X$ such that $\theta|_K=\alpha$. Our strategy is to decompose $X$ into double cosets with respect to a subgroup containing $L_0$. Most of these double cosets will be treated separately using Lemma~\ref{lem:single-double-coset}, while the two double cosets containing $L_0$ and $wL_0$ will be treated together in order to preserve the prescribed complete mapping $\alpha$ on $K$.

For this purpose, we use the stabilizer of $E_1$, which contains $L_0$. Let
\[
Q_0:=X_{E_1}=\{x\in X:xE_1=E_1\}.
\]
To describe the structure of $Q_0$, we introduce the following subgroup. For $Y\in M_m(q)$, set
\[
\widehat u(Y):=
\begin{pmatrix}
I_m&Y&0\\0&I_m&0\\0&0&I_s
\end{pmatrix},\quad
u(Y):=\pi(\widehat u(Y)),
\]
and
\[
\widehat U_0:=\{\widehat u(Y):Y\in M_m(q)\},\quad U_0:=\pi(\widehat U_0)=\{u(Y):Y\in M_m(q)\}.
\]
Since $\widehat u(Y_1)\widehat u(Y_2)=\widehat u(Y_1+Y_2)$ and $\widehat U_0\cap Z=1$, we have $U_0\cong (M_m(q),+)$. Moreover, $L_0$ normalizes $U_0$. Indeed, if $\ell=\pi(\diag(A,B,C))\in L_0$, then $\ell u(Y)\ell^{-1}=u(AYB^{-1})\in U_0$. Every element of $Q_0$ can be written uniquely in the form $u(Y)\ell$, $Y\in M_m(q)$, $\ell\in L_0$. Indeed, if $x\in Q_0$, then a lift of $x$ to $\widehat X$ has the form
\[
\begin{pmatrix}
A&D&0\\0&B&0\\0&0&C
\end{pmatrix}
=\widehat u(DB^{-1})\diag(A,B,C)
\]
with $\diag(A,B,C)\in\widehat L_0$. Since $U_0\cap L_0=1$ and $L_0$ normalizes $U_0$, it follows that
\[
Q_0=U_0\rtimes L_0.
\]

We now define the subsets that will form the $Q_0$-double-coset decomposition of $X$. For $0\leq j\leq m$, define
\[
\mathcal D_j:=\{x\in X:\dim(E_1\cap xE_1)=m-j\}.
\]

\begin{lem}\label{lem:equal-block-double-cosets}
Suppose that $L_0$ admits a complete mapping. Then $Q_0$ admits a complete mapping, and $X=\mathcal D_0\mathbin{\dot\cup}\mathcal D_1\mathbin{\dot\cup}\cdots\mathbin{\dot\cup}\mathcal D_m$, where each $\mathcal D_j$ is a $Q_0$-double coset, $\mathcal D_0=Q_0$, $\mathcal D_m=Q_0wQ_0$. Moreover, for every $1\leq j\leq m-1$, the set $\mathcal D_j$ admits a complete mapping triple.
\end{lem}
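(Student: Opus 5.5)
Three tasks: obtain a complete mapping of $Q_0$, establish the double coset decomposition, and show $\mathcal D_j\subseteq\mathcal D_j^2$ for $1\leq j\leq m-1$ so that Lemma~\ref{lem:single-double-coset} applies.

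\emph{Complete mapping of $Q_0$.} We have $Q_0=U_0\rtimes L_0$, and $U_0\cong(M_m(q),+)$ is elementary abelian of odd order since $q$ is odd. Hence $U_0$ admits a complete mapping; the identity works because squaring $Y\mapsto 2Y$ is bijective. Since $U_0\trianglelefteq Q_0$ and $Q_0/U_0\cong L_0$ admits a complete mapping by hypothesis, we apply Lemma~\ref{lem:normal-extension}. Its Sylow-stabilizing hypotheses are vacuous here: $U_0$ has odd order, so $P\cap U_0=1$, and the complete mapping of $L_0$ need not stabilize anything for this purpose. Alternatively, we repeat the coset construction in the proof of Lemma~\ref{lem:normal-extension} directly, or use Theorem~\ref{thm:wilcox}. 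In either case $Q_0$ admits a complete mapping.

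\emph{Double cosets.} We mimic Lemma~\ref{lem:intersection-double-cosets}, replacing the quotient $L$ by the space $E:=E_1\oplus E_2$, of dimension $2m$. Every $x\in X$ preserves $E$, so $x\mapsto xE_1$ sends $X$ onto $\operatorname{Gr}_m(E)$. Transitivity of $X$ on $\operatorname{Gr}_m(E)$ follows as before: take $s\in\SL(E)$, correcting the determinant by a diagonal element, and extend by the identity on $F$. This also shows the stabilizer of $E_1$ is $Q_0$. Define $\Omega_j$ as the set of $m$-spaces $Y\leq E$ with $\dim(E_1\cap Y)=m-j$. The argument of Lemma~\ref{lem:intersection-double-cosets} (adapted bases $C\oplus A\oplus B\oplus R$, then a determinant fix inside $\SL(E)$ preserving the decomposition) shows that $Q_0$ is transitive on $\Omega_j$. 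Consequently each nonempty $\mathcal D_j$ is a single $Q_0$-double coset, and $\mathcal D_0=Q_0$. Every $j\leq m$ occurs, because $\min\{m,m\}=m$, so $X=\mathcal D_0\mathbin{\dot\cup}\cdots\mathbin{\dot\cup}\mathcal D_m$. Since $wE_1=E_2$ and $E_1\cap E_2=0$, we have $w\in\mathcal D_m$, hence $\mathcal D_m=Q_0wQ_0$.

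\emph{The condition $\mathcal D_j\subseteq\mathcal D_j^2$.} For $g\in\mathcal D_j$, Lemma~\ref{lem:grassmann-triangle} in $E$ gives $W$ with $\dim(E_1\cap W)=\dim(W\cap gE_1)=m-j$. Choose $x\in X$ with $xE_1=W$. Then $x\in\mathcal D_j$ and $x^{-1}g\in\mathcal D_j$, so $g\in\mathcal D_j^2$. Lemma~\ref{lem:single-double-coset}, applied with $H=Q_0$, now yields a complete mapping triple on each $\mathcal D_j$. This works for all $1\leq j\leq m$, including $j=m$; the statement excludes $j=m$ only because that double coset will be combined with $\mathcal D_0$ later to preserve $\alpha$ on $K$. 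The main point needing care is the first step; the rest is a direct transcription of Lemma~\ref{lem:intersection-double-cosets}.
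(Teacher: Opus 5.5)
Your proposal is correct and follows the paper's proof: the decomposition into $Q_0$-double cosets, the identification $\mathcal D_m=Q_0wQ_0$ via $w\in\mathcal D_m$, and the inclusion $\mathcal D_j\subseteq\mathcal D_j^2$ all come from transplanting Lemma~\ref{lem:intersection-double-cosets} to $E_1\oplus E_2$, and Lemma~\ref{lem:single-double-coset} then finishes. The only difference is the first step. The paper applies Lemma~\ref{lem:exact-factorization} directly to the exact factorization $Q_0=U_0L_0$, which needs no Sylow hypothesis. Your citation of Lemma~\ref{lem:normal-extension} is not literally valid, because its hypothesis that $L_0$ have a complete mapping stabilizing $\overline P$ is neither vacuous nor available. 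Your fallback works, however: rerunning that lemma's construction, which never uses the stabilization hypothesis, or applying Theorem~\ref{thm:wilcox} with the normal subgroup $U_0$ both give a complete mapping of $Q_0$.
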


\begin{proof}
We have $Q_0=U_0\rtimes L_0$, where $U_0\cong(M_m(q),+)$. Since $q$ is odd, $|U_0|=q^{m^2}$ is odd, and hence $U_0$ admits a complete mapping. Moreover, by hypothesis, $L_0$ admits a complete mapping. Since $Q_0=U_0L_0$ is an exact factorization, Lemma \ref{lem:exact-factorization} implies that $Q_0$ admits a complete mapping.

As in the proof of Lemma~\ref{lem:adjacent-quotient-transitivity}, $\SL(E_1\oplus E_2)$ acts transitively on the $m$-dimensional subspaces of $E_1\oplus E_2$. Since this action is contained in that of $X$, the group $X$ is also transitive on these subspaces. Moreover, $Q_0=X_{E_1}$. Hence, by the argument of Lemma~\ref{lem:intersection-double-cosets},
\[
X=\mathcal D_0\mathbin{\dot\cup}\mathcal D_1\mathbin{\dot\cup}\cdots\mathbin{\dot\cup}\mathcal D_m,
\]
where each $\mathcal D_j$ is a $Q_0$-double coset and $\mathcal D_j\subseteq\mathcal D_j^2$, $0\leq j\leq m$.

By definition, $\mathcal D_0=\{x\in X:xE_1=E_1\}=Q_0$. Since $wE_1=E_2$ and $E_1\cap E_2=0$, we have $w\in\mathcal D_m$. As $\mathcal D_m$ is a $Q_0$-double coset, it follows that $\mathcal D_m=Q_0wQ_0$.

Finally, for $1\leq j\leq m-1$, since $\mathcal D_j\subseteq\mathcal D_j^2$ and $Q_0$ admits a complete mapping, Lemma \ref{lem:single-double-coset} implies that $\mathcal D_j$ admits a complete mapping triple.
\end{proof}

By Lemma~\ref{lem:equal-block-double-cosets}, each of $\mathcal D_1,\ldots,\mathcal D_{m-1}$ admits a complete mapping triple. It remains to treat $\mathcal D_0$ and $\mathcal D_m$. Since $K=L_0\mathbin{\dot\cup}wL_0$, $L_0\subseteq\mathcal D_0$, $wL_0\subseteq\mathcal D_m$, we have $K\subseteq \mathcal D_0\mathbin{\dot\cup}\mathcal D_m$. Thus, in order to preserve the prescribed complete mapping $\alpha$ on $K$, $\mathcal D_0$ and $\mathcal D_m$ will be treated together. Put
\[
S:=\mathcal D_0\mathbin{\dot\cup}\mathcal D_m=Q_0\mathbin{\dot\cup}Q_0wQ_0.
\]
We now describe the $U_0$-cosets contained in $S$, which will be used in the application of Lemma~\ref{lem:local-wilcox}.

Since $Q_0=U_0\rtimes L_0$, the $U_0$-cosets contained in $Q_0$ are $\ell U_0$, $\ell\in L_0$. For the double coset $Q_0wQ_0$, since $w^{-1}L_0w=L_0$ and $L_0$ normalizes $U_0$, $Q_0wQ_0=U_0L_0wU_0L_0=U_0wL_0U_0$. Hence its $U_0$-cosets are of the form
\[
B_{Y,\ell}:=u(Y)w\ell U_0,\quad Y\in M_m(q),\quad \ell\in L_0.
\]
This parametrization is unique. Indeed, suppose that $B_{Y,\ell}=B_{Y',\ell'}$. Then $u(Y)w\ell=u(Y')w\ell'u(T)$ for some $u(T)\in U_0$, and hence
\[
u(Y-Y')=w\ell'u(T)\ell^{-1}w^{-1}\in Q_0\cap wQ_0w^{-1}.
\]
Since $wE_1=E_2$, $Q_0\cap wQ_0w^{-1}=X_{E_1}\cap X_{E_2}=L_0$. Thus $u(Y-Y')\in U_0\cap L_0=1$, and so $Y=Y'$. It then follows from $w\ell U_0=w\ell'U_0$ that $\ell^{-1}\ell'\in U_0\cap L_0=1$, whence $\ell=\ell'$.

Finally, since $B_{0,\ell}=w\ell U_0$, and $K=L_0\mathbin{\dot\cup}wL_0$, the $U_0$-cosets represented by elements of $K$ are precisely
\[
\{\ell U_0:\ell\in L_0\}\mathbin{\dot\cup}\{B_{0,\ell}:\ell\in L_0\}=\{hU_0:h\in K\}.
\]
Thus the $U_0$-cosets contained in $S$ split into the $U_0$-cosets represented by elements of $K$, namely $\{hU_0:h\in K\}$, and the cosets $B_{Y,\ell}$ with $Y\neq0$ and $\ell\in L_0$.

\begin{lem}\label{lem:equal-block-coset-layer}
Suppose that $K$ has a prescribed complete mapping $\alpha$, and that $L_0$ admits a complete mapping. Then there exist an index set $I$ and three bijections
\[
\widetilde x,\widetilde y,\widetilde z:I\longrightarrow S/U_0
\]
such that $\widetilde z(i)\subseteq\widetilde x(i)\widetilde y(i)$, $i\in I$. Moreover, the index set $I$ may be chosen so that $K\subseteq I$ and, for every $h\in K$, $\widetilde x(h)=hU_0$, $\widetilde y(h)=\alpha(h)U_0$, $\widetilde z(h)=h\alpha(h)U_0$.
\end{lem}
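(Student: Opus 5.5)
The plan is to build $\widetilde x,\widetilde y,\widetilde z$ separately on the two families of $U_0$-cosets in $S$ described just before the statement, namely $\{hU_0:h\in K\}$ and $\{B_{Y,\ell}:Y\neq0,\ \ell\in L_0\}$. We take $I:=K\mathbin{\dot\cup}I'$, where $I'$ is an index set for the second family, and make each of the three maps send $K$ onto the first family and $I'$ onto the second. I have not finished the construction on $I'$; the obstacle is described in the third paragraph.

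\emph{The part indexed by $K$.} For $h\in K$ we put $\widetilde x(h)=hU_0$, $\widetilde y(h)=\alpha(h)U_0$ and $\widetilde z(h)=h\alpha(h)U_0$, as the statement requires. We must check that $h\mapsto hU_0$ is injective on $K$, that is, that distinct elements of $K$ lie in distinct $U_0$-cosets. For $h,h'\in K$ the element $h^{-1}h'$ lies in $K$. Since $K=L_0\mathbin{\dot\cup}wL_0$, if it also lay in $U_0$ it would lie either in $L_0\cap U_0=1$ or in $wL_0\cap U_0$. The latter is empty, because $wL_0\subseteq\mathcal D_m$ while $U_0\subseteq Q_0=\mathcal D_0$. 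Hence $h\mapsto hU_0$ is a bijection from $K$ onto $\{hU_0:h\in K\}$. The maps $h\mapsto h$, $h\mapsto\alpha(h)$ and $h\mapsto h\alpha(h)$ all permute $K$, because $\alpha$ is a complete mapping. So $\widetilde x,\widetilde y,\widetilde z$ restricted to $K$ are bijections onto this family. The containment $h\alpha(h)U_0\subseteq (hU_0)(\alpha(h)U_0)$ holds trivially.

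\emph{The remaining cosets: the intended construction.} Here we need bijections $\widetilde x,\widetilde y,\widetilde z$ from $I'$ onto $\mathcal B':=\{B_{Y,\ell}:Y\neq0,\ \ell\in L_0\}$ such that $\widetilde z(i)\subseteq\widetilde x(i)\widetilde y(i)$ for every $i\in I'$. Two facts about products will be used.
\begin{enumerate}
\item[(1)] Left multiplication by $U_0$ acts on $\mathcal B'$ by $u(T)B_{Y,\ell}=B_{Y+T,\ell}$, and $L_0$ acts through $\ell u(Y)\ell^{-1}=u(AYB^{-1})$ together with $w^{-1}L_0w=L_0$.
\item[(2)] The block $\SL_2$-type identity says that $wu(Z)w$ is, up to a block-scalar factor in $L_0$, a lower unitriangular block matrix. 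When $Z$ is invertible, this lower unitriangular matrix equals $u(-Z^{-1})\,w\,\ell_Z\,u(-Z^{-1})$ for an explicit $\ell_Z\in L_0$; this is the block version of Lemma~\ref{lem:SL2-Bruhat}.
\end{enumerate}
From (2) it follows that for $Y'$ invertible the product $u(Y)w\ell\cdot u(Y')w\ell'$ lies in $B_{Y-(AY'B^{-1})^{-1},\ \ell''}$, with $\ell''$ given explicitly in terms of $\ell,\ell',Y'$. The candidate construction takes $I'=(M_m(q)\setminus\{0\})\times L_0$ and sets
\[
\widetilde x(Y,\ell)=B_{Y,\ell},\qquad \widetilde y(Y,\ell)=B_{\phi(Y),\beta(\ell)},
\]
where $\beta$ is a complete mapping of $L_0$ and $\phi$ is a permutation of $M_m(q)\setminus\{0\}$. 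Here $\widetilde y(Y,\ell)$ would be represented by an element of the form $u(Y')w\beta(\ell)$ with $Y'$ invertible. Then $\widetilde z$ would be read off from (2), and both $\phi$ and the coordinate $\ell''$ would be adjusted using (1) so that $\widetilde z$ is a bijection. Because $q$ is odd, $U_0\cong(M_m(q),+)$ has odd order, so the additive maps $Y\mapsto Y$ and $Y\mapsto 2Y$ are permutations of $M_m(q)$. I would try to exploit this in the same way as the squaring trick for odd-order groups.

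\emph{Main obstacle.} The hard step is the previous paragraph when $m\geq2$. There $\mathcal B'$ contains cosets $B_{Y,\ell}$ with $Y$ singular, and for those the product of two elements of $Q_0wQ_0$ need not lie in $Q_0wQ_0$; it can fall into some $\mathcal D_j$ with $0<j<m$. To handle this, I would stratify $\mathcal B'$ by the rank of $Y$ and pair the strata using the Levi action $Y\mapsto AYB^{-1}$, which is transitive on matrices of a fixed rank. Within each stratum I would choose the representatives of $\widetilde y$ so that the middle factor $wu(\cdot)w$ is controlled. The exact choice of $\phi$ and of the representatives is the point I expect to require the most work, and I have not settled it.
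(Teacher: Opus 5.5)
Your treatment of the part indexed by $K$ is correct and is exactly what the paper does. The gap is that you do not construct the bijections onto $\mathcal B'=\{B_{Y,\ell}:Y\neq0\}$, which is the real content of the lemma. Moreover, the obstacle you describe comes from working with fixed representatives rather than with cosets. Since $\widetilde x(i)=u(Y)w\ell U_0$ is a full left $U_0$-coset and $U_0u(Y')=U_0$, for $\widetilde y(i)=B_{Y',\ell'}$ one has $\widetilde x(i)\,\widetilde y(i)=u(Y)w\ell\,U_0\,w\ell'U_0$. So the product does not depend on $Y'$ at all. Concretely, take the representative $u(Y)w\ell u(R-Y')$ of $B_{Y,\ell}$. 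Multiplying by $u(Y')w\ell'$ gives $u(Y)w\ell u(R)w\ell'$ with $R$ arbitrary. For $R\in\GL_m(q)$, a direct computation yields
\[ B_{Y,\ell}B_{Y',\ell'}\supseteq B_{Y-BR^{-1}A^{-1},\ \pi(\diag(ARA',BR^{-1}B',CC'))}, \]
where $\diag(A,B,C)$ and $\diag(A',B',C')$ lift $\ell$ and $\ell'$. Thus singular $Y$ or $Y'$ cause no difficulty and no stratification by rank is needed. The first coordinate of $\widetilde y(i)$ can simply be taken to be $Y$; the paper does this. Your proposed remedy, choosing representatives of $\widetilde y(i)$ to control the middle factor, cannot work: every element of $u(Y')w\ell'U_0$ begins with the same $u(Y')$. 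The freedom lies in the representative of $\widetilde x(i)$.

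What remains are two combinatorial devices that your proposal does not supply.

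\emph{First device.} The $M_m(q)$-coordinate of $\widetilde z(i)$ has the form $Y-\Delta$ with $\Delta=BR^{-1}A^{-1}\in\GL_m(q)$, and it must run bijectively through $M_m(q)\setminus\{0\}$. So you need a permutation $\tau$ of $M_m(q)\setminus\{0\}$ with $Y-\tau(Y)\in\GL_m(q)$ for all $Y\neq0$. Scalar maps such as $Y\mapsto 2Y$, which is where your odd-order ``squaring'' idea leads, fail for singular $Y$. The paper obtains $\tau$ from a perfect matching of $\operatorname{Cay}(M_m(q),\GL_m(q))-\{0\}$, a connected vertex-transitive graph of odd order. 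An elementary alternative: fix $D\in\GL_m(q)$, let $p$ be the characteristic, and set $\tau(Y)=Y+D$ for $Y\notin\F_pD$, $\tau(kD)=(k+1)D$ for $1\leq k\leq p-2$, and $\tau((p-1)D)=D$. This works because $p$ is odd.

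\emph{Second device.} Put $\Delta_Y:=Y-\tau(Y)$ and $R:=A^{-1}\Delta_Y^{-1}B$. Then the $L_0$-coordinate of the product becomes $s_Y(\ell)\ell'$, where $s_Y(\ell)=\pi(\diag(\Delta_Y^{-1}B,\Delta_YA,C))$ is a $Y$-dependent permutation of $L_0$. Your complete mapping of $L_0$ must therefore be composed with this twist: take $\ell'=\phi(s_Y(\ell))$ for a complete mapping $\phi$ of $L_0$. This gives $\widetilde y(Y,\ell)=B_{Y,\phi(s_Y(\ell))}$ and $\widetilde z(Y,\ell)=B_{\tau(Y),\,s_Y(\ell)\phi(s_Y(\ell))}$, both bijective in $(Y,\ell)$. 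Without these two ingredients the argument does not close.
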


\begin{proof}
Put $\mathcal M:=(M_m(q),+)$, $I:=K\mathbin{\dot\cup}\bigl((\mathcal M\setminus\{0\})\times L_0\bigr)$. We define $\widetilde x,\widetilde y,\widetilde z$ separately on the two parts of $I$.

We first deal with the $K$-part. For $h\in K$, define
\[
\widetilde x(h)=hU_0,\quad\widetilde y(h)=\alpha(h)U_0,\quad\widetilde z(h)=h\alpha(h)U_0.
\]
By the preceding description of the $U_0$-cosets represented by elements of $K$, the map $h\mapsto hU_0$ is a bijection from $K$ onto $\{hU_0:h\in K\}$. Since both $\alpha$ and $h\mapsto h\alpha(h)$ are permutations of $K$, each of the three families above runs bijectively through these $U_0$-cosets. Moreover, $\widetilde z(h)=h\alpha(h)U_0\subseteq (hU_0)(\alpha(h)U_0)=\widetilde x(h)\widetilde y(h)$.

It remains to treat the cosets $B_{Y,\ell}$, $Y\neq0$. Consider the Cayley graph
\[
\Gamma:=\operatorname{Cay}(\mathcal M,\GL_m(q)),\quad Y\sim Y'\Longleftrightarrow Y-Y'\in\GL_m(q).
\]
Then $\Gamma$ is vertex-transitive. Moreover, it is connected if and only if $\GL_m(q)$ generates the group $\mathcal M$. For every $c\in\F_q^\times$, we have $cE_{ij}=cI_m-(cI_m-cE_{ij})$, $i\neq j$, and $cE_{ii}=2cI_m-(2cI_m-cE_{ii})$. Since $q$ is odd, all matrices on the right-hand sides are invertible. Hence $cE_{ij}\in \langle \GL_m(q)\rangle_+$, $c\in\F_q^\times$, $1\leq i,j\leq m$. Since every matrix in $M_m(q)$ is a sum of matrices of the form $cE_{ij}$, it follows that $\langle \GL_m(q)\rangle_+=\mathcal M$. Thus $\Gamma$ is connected. Since $\Gamma$ is connected and vertex-transitive and has odd order $q^{m^2}$, the graph $\Gamma-\{0\}$ has a perfect matching by \cite[Theorem~1.1(a)]{ZhangSun}. Hence there exists a fixed-point-free involution $\tau:\mathcal M\setminus\{0\}\longrightarrow \mathcal M\setminus\{0\}$ such that $Y$ and $\tau(Y)$ are adjacent for every $Y\neq0$. Thus
\[
\Delta_Y:=Y-\tau(Y)\in\GL_m(q),\quad Y\neq0.
\]

Choose a complete mapping $\phi:L_0\longrightarrow L_0$, and put $\widehat\phi(t):=t\phi(t)$. Then both $\phi$ and $\widehat\phi$ are permutations of $L_0$. Fix $Y\neq0$. For $\ell\in L_0$, choose a lift $\widehat\ell=\diag(A,B,C)\in\widehat L_0$ and define
\[
s_Y(\ell):=\pi\left(\diag(\Delta_Y^{-1}B,\Delta_YA,C)\right).
\]
Since $\det(\Delta_Y^{-1}B)\det(\Delta_YA)\det C=\det A\det B\det C=1$, $\diag(\Delta_Y^{-1}B,\Delta_YA,C)\in \widehat L_0$. Moreover, the map $s_Y$ is well defined, since replacing $\widehat\ell$ by another lift multiplies $A,B,C$ by the same central scalar and therefore leaves the same image under $\pi$. Finally, $s_Y^2(\ell)=\pi(\diag(A,B,C))=\ell$. Thus $s_Y^2=\mathrm{id}_{L_0}$, and hence $s_Y$ is a permutation of $L_0$.

For $Y\neq0$, $\ell\in L_0$, define
\[
\begin{aligned}
\widetilde x(Y,\ell)&:=B_{Y,\ell},\quad
\widetilde y(Y,\ell):=B_{Y,\phi(s_Y(\ell))},\quad
\widetilde z(Y,\ell):=B_{\tau(Y),\widehat\phi(s_Y(\ell))}.
\end{aligned}
\]
Since $\tau,s_Y,\phi$, and $\widehat\phi$ are permutations, each of these three families runs bijectively through the cosets $B_{Y,\ell}$, $Y\neq0$, $\ell\in L_0$.

It remains to verify that $\widetilde z(Y,\ell)\subseteq\widetilde x(Y,\ell)\widetilde y(Y,\ell)$, $Y\neq0$, $\ell\in L_0$. We first establish a general product inclusion. Let $\widehat\ell=\diag(A,B,C)$, $\widehat\ell'=\diag(A',B',C')$ be lifts of $\ell,\ell'\in L_0$. For $R\in\GL_m(q)$, choose
\[
u(Y)w\ell u(R-Y')\in B_{Y,\ell},\quad u(Y')w\ell'\in B_{Y',\ell'}.
\]
Since $u(R-Y')u(Y')=u(R)$, their product is $u(Y)w\ell u(R)w\ell'$. A direct calculation then gives
\begin{equation}\label{eq:BYl}
B_{Y,\ell}B_{Y',\ell'}
\supseteq
B_{Y-BR^{-1}A^{-1},\,\pi\left(\diag(ARA',BR^{-1}B',CC')\right)}.
\end{equation}
We now choose $R$ and $\ell'$ so that the right-hand side of \eqref{eq:BYl} is precisely $\widetilde z(Y,\ell)$. Set $R=A^{-1}\Delta_Y^{-1}B$. Then $R^{-1}=B^{-1}\Delta_YA$, and hence $Y-BR^{-1}A^{-1}=Y-\Delta_Y=\tau(Y)$. Next set $\ell'=\phi(s_Y(\ell))$. By the choice of $R$, $AR=\Delta_Y^{-1}B$, $BR^{-1}=\Delta_YA$. Therefore, by the definition of $s_Y$,
\[
\begin{aligned}
\pi\left(\diag(ARA',BR^{-1}B',CC')\right)=s_Y(\ell)\phi(s_Y(\ell))=\widehat\phi(s_Y(\ell)).
\end{aligned}
\]
Hence \eqref{eq:BYl} yields $B_{\tau(Y),\,\widehat\phi(s_Y(\ell))}\subseteq B_{Y,\ell}\, B_{Y,\phi(s_Y(\ell))}$, that is, $\widetilde z(Y,\ell)\subseteq\widetilde x(Y,\ell)\widetilde y(Y,\ell)$.

Together, the two constructions give the required bijections $\widetilde x,\widetilde y,\widetilde z:I\longrightarrow S/U_0$, satisfying the required product condition.
\end{proof}

To apply Lemma~\ref{lem:local-wilcox}, we also need representatives of the $U_0$-cosets that form simultaneously a left and right transversal in $S=\mathcal D_0\mathbin{\dot\cup}\mathcal D_m=Q_0\mathbin{\dot\cup}Q_0wQ_0$. The next lemma provides one.

\begin{lem}\label{lem:equal-block-transversal}
Let $\mathcal R:=L_0\mathbin{\dot\cup}\bigl\{u(Y)w\ell u(Y):Y\in M_m(q),\ell\in L_0\bigr\}$. Then $\mathcal R$ is simultaneously a transversal for the left and right $U_0$-cosets contained in $S$. Moreover, $K\subseteq \mathcal R$.
\end{lem}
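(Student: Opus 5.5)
We split $S=Q_0\mathbin{\dot\cup}Q_0wQ_0$ and check the transversal property separately on each part. Since $U_0\trianglelefteq Q_0$, left and right $U_0$-cosets in $Q_0$ coincide, and $Q_0=U_0\rtimes L_0$ shows that $L_0$ is a two-sided transversal for the $U_0$-cosets in $Q_0$. Moreover, $Q_0wQ_0$ is a union of both left and right $U_0$-cosets. So it suffices to prove that $\mathcal R_w:=\{u(Y)w\ell u(Y):Y\in M_m(q),\ \ell\in L_0\}$ meets every left $U_0$-coset in $Q_0wQ_0$ exactly once, and every right $U_0$-coset there exactly once.

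\emph{Left cosets.} We have $u(Y)w\ell u(Y)\in u(Y)w\ell U_0=B_{Y,\ell}$. By the uniqueness of the parametrization $B_{Y,\ell}$ established above, the map $(Y,\ell)\mapsto u(Y)w\ell u(Y)$ sends distinct pairs into distinct left cosets. It also hits every left coset $B_{Y,\ell}$. Hence $\mathcal R_w$ is a left transversal, and in particular these elements are pairwise distinct.

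\emph{Right cosets.} Here we use symmetry. Put $\ell'=w\ell w^{-1}\in L_0$, which is valid since $w$ normalizes $L_0$. Then $u(Y)w\ell u(Y)=u(Y)\ell' w u(Y)\in U_0\ell' w u(Y)=U_0 w\ell'' u(Y)$ for a suitable relabelling. To make this precise, note that $Q_0wQ_0=U_0L_0wU_0$, so the right cosets are $U_0 \ell w u(Y)$. The claim is that they are uniquely parametrized by $(\ell,Y)$; this follows by the same argument as before, via $Q_0\cap w^{-1}Q_0w=L_0$. Writing $w\ell=(w\ell w^{-1})w$, the element $u(Y)w\ell u(Y)$ lies in $U_0(w\ell w^{-1})wu(Y)$. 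The map $(Y,\ell)\mapsto(w\ell w^{-1},Y)$ is a bijection onto the parameter set, so $\mathcal R_w$ is also a right transversal.

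Finally, $K=L_0\mathbin{\dot\cup}wL_0$. We have $L_0\subseteq\mathcal R$ directly, and $w\ell=u(0)w\ell u(0)\in\mathcal R$, so $K\subseteq\mathcal R$. The only step needing care is the uniqueness of the right-coset parametrization. I would prove it exactly as in the left case: if $U_0\ell wu(Y)=U_0\ell'wu(Y')$, then $u(Y-Y')$ is conjugate by $w^{-1}\ell'^{-1}$ into $U_0\ell\ell'^{-1}$, forcing $u(Y'-Y)\in w^{-1}Q_0w\cap Q_0=L_0$, hence $Y=Y'$ and then $\ell=\ell'$.
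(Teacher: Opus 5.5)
Your proof is correct. The left-coset part is the same as the paper's, but you obtain the right-transversal property by a different route.

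The paper never parametrizes right cosets. It computes $\bigl(u(Y)w\ell u(Y)\bigr)^{-1}=u(-Y)w\ell^\dagger u(-Y)$ with $\ell^\dagger=w^{-1}\ell^{-1}w^{-1}\in L_0$, so that $\mathcal R^{-1}=\mathcal R$. It then checks $S^{-1}=S$ using $w^2\in L_0$, and lets inversion, which swaps left and right $U_0$-cosets in $S$, transport the left-transversal property to the right.

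You instead write $Q_0wQ_0=U_0L_0wU_0$ and prove directly that the right cosets $U_0\ell wu(Y)$ are uniquely parametrized. This uses $Q_0\cap w^{-1}Q_0w=L_0$, which holds because $w^{-1}E_1=E_2$, or because $w^2\in L_0$ gives $w^{-1}Q_0w=wQ_0w^{-1}$. You then observe that $u(Y)w\ell u(Y)\in U_0(w\ell w^{-1})wu(Y)$ and that $(Y,\ell)\mapsto(w\ell w^{-1},Y)$ is a bijection.

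The paper's route avoids a second uniqueness check by exploiting the inversion-symmetric shape of $\mathcal R$. Yours costs one mirror-image computation but is more transparent and more flexible. It shows that $\{u(Y)w\ell u(f_\ell(Y))\}$ is a two-sided transversal for any family of bijections $f_\ell$ of $M_m(q)$, so the symmetric choice $f_\ell=\mathrm{id}$ is not essential; the paper's inversion argument needs it.

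One presentational point: your uniqueness sentence gets the direction of conjugation muddled. The clean version is that $\ell wu(Y)=u(T)\ell'wu(Y')$ gives $u(Y-Y')=(\ell w)^{-1}u(T)\ell'w\in w^{-1}Q_0w\cap Q_0=L_0$. Hence $Y=Y'$, and then $\ell\ell'^{-1}=u(T)\in U_0\cap L_0=1$.
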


\begin{proof}
We first show that $\mathcal R$ is a transversal for the left $U_0$-cosets contained in $S$. Since $Q_0=U_0\rtimes L_0$, the set $L_0$ is a transversal for the left $U_0$-cosets in $Q_0$. Moreover,
\[
u(Y)w\ell u(Y)U_0=u(Y)w\ell U_0=B_{Y,\ell}.
\]
Since the $B_{Y,\ell}$, $Y\in M_m(q)$, $\ell\in L_0$, are precisely the left $U_0$-cosets contained in $Q_0wQ_0$, and their parametrization is unique, the elements $u(Y)w\ell u(Y)$, $Y\in M_m(q)$, $\ell\in L_0$, represent these cosets exactly once. Hence $\mathcal R$ is a left $U_0$-transversal in $S$.

We next show that $\mathcal R$ is also a transversal for the right $U_0$-cosets contained in $S$. For $\ell\in L_0$, let $\ell^\dagger:=w^{-1}\ell^{-1}w^{-1}$. Since $w$ normalizes $L_0$ and $w^2\in L_0$, $\ell^\dagger=(w^{-1}\ell^{-1}w)w^{-2}\in L_0$. Therefore
\[
\begin{aligned}
\bigl(u(Y)w\ell u(Y)\bigr)^{-1}=u(-Y)\ell^{-1}w^{-1}u(-Y)=u(-Y)w\ell^\dagger u(-Y).
\end{aligned}
\]
Since $L_0^{-1}=L_0$, the preceding formula shows that $\mathcal R^{-1}=\mathcal R$.

Moreover, $S^{-1}=S$. Indeed, $Q_0^{-1}=Q_0$, and since $w^2\in L_0\leq Q_0$, we have $w^{-1}=w\,w^{-2}\in wQ_0$. Thus $(Q_0wQ_0)^{-1}=Q_0w^{-1}Q_0=Q_0wQ_0$, and consequently, $S^{-1}=S$. Therefore, for every left $U_0$-coset $rU_0\subseteq S$, $(rU_0)^{-1}=U_0r^{-1}\subseteq S^{-1}=S$. Thus inversion maps the left $U_0$-cosets contained in $S$ bijectively onto the right $U_0$-cosets contained in $S$. Hence $\mathcal R^{-1}=\mathcal R$ is a transversal for the right $U_0$-cosets contained in $S$.

Finally, $L_0\subseteq\mathcal R$, and for every $\ell\in L_0$, taking $Y=0$ gives $w\ell=u(0)w\ell u(0)\in\mathcal R$. Hence $L_0\mathbin{\dot\cup}wL_0=K\subseteq\mathcal R$.
\end{proof}

We are now ready to establish the desired lifting from $K$ to $X$.

\begin{pro}\label{pro:relative-equal-block-lifting}
Suppose that $K$ has a prescribed complete mapping $\alpha$, and that $L_0$ admits a complete mapping. Then there exists a complete mapping $\theta$ of $X$ such that $\theta|_K=\alpha$.
\end{pro}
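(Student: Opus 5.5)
The plan is to build a complete mapping triple on all of $X$ by patching two pieces with Lemma~\ref{lem:patching}. One piece is $S=\mathcal D_0\mathbin{\dot\cup}\mathcal D_m$, where we will prescribe the values on $K$. The other consists of the double cosets $\mathcal D_1,\ldots,\mathcal D_{m-1}$. These already admit complete mapping triples by Lemma~\ref{lem:equal-block-double-cosets}, whose hypothesis holds because $L_0$ admits a complete mapping. So the whole task reduces to constructing a complete mapping triple $(A,B,C)$ on $S$ with index set $I\times U_0$ that reproduces $\alpha$ on $K$. Concretely, for $h\in K$ we want
\[
A(h,1)=h,\quad B(h,1)=\alpha(h),\quad C(h,1)=h\alpha(h).
\]

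For $S$ we apply Lemma~\ref{lem:local-wilcox} with the subgroup $U_0$ in place of $H$.

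\textbf{Structure of $S$.} The set $S=Q_0\cup Q_0wQ_0$ is a union of $Q_0$-double cosets, hence of both left and right $U_0$-cosets.

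\textbf{Triple on $U_0$.} Since $|U_0|=q^{m^2}$ is odd, we take the triple $a(u)=u$, $b(u)=u$, $c(u)=u^2$ on $J=U_0$. It is a complete mapping triple because squaring is a bijection on a group of odd order, and it satisfies $a(1)=b(1)=c(1)=1$, so $j_0=1$ works.

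\textbf{Coset bijections.} Lemma~\ref{lem:equal-block-coset-layer} supplies bijections $\widetilde x,\widetilde y,\widetilde z:I\to S/U_0$ with $K\subseteq I$ and with the prescribed values on $K$.

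\textbf{Representatives $y_i$.} We need $y_i\in\widetilde y(i)$ such that $\{y_i\}$ is simultaneously a left and right $U_0$-transversal of $S$, and such that $y_h=\alpha(h)$ for $h\in K$. We take $y_i$ to be the unique element of $\mathcal R$ lying in $\widetilde y(i)$, where $\mathcal R$ is the set from Lemma~\ref{lem:equal-block-transversal}. Because $\widetilde y$ is a bijection onto $S/U_0$ and $\mathcal R$ is a left transversal, $\{y_i\}=\mathcal R$, which is also a right transversal. For $h\in K$ we have $\alpha(h)\in K\subseteq\mathcal R$ and $\alpha(h)\in\alpha(h)U_0=\widetilde y(h)$, so $y_h=\alpha(h)$ as required.

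\textbf{Prescribed values.} For $h\in K$ we prescribe $x_h=h\in\widetilde x(h)$ and $z_h=h\alpha(h)\in\widetilde z(h)$. These satisfy $z_h=x_hy_h$.

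The additional assertion of Lemma~\ref{lem:local-wilcox} then gives a triple on $S$ with $A(h,1)=h$, $B(h,1)=\alpha(h)$ and $C(h,1)=h\alpha(h)$ for all $h\in K$.

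Finally, we patch the triple on $S$ with the triples on $\mathcal D_1,\ldots,\mathcal D_{m-1}$ using Lemma~\ref{lem:patching}, and set $\theta=B\circ A^{-1}$. For $h\in K$, $A^{-1}(h)=(h,1)$, so $\theta(h)=\alpha(h)$, that is, $\theta|_K=\alpha$.

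The main obstacle is compatibility of the representatives. The prescribed representatives on $K$ must belong to the common transversal used by Lemma~\ref{lem:local-wilcox}. This is exactly what the choice $K\subseteq\mathcal R$ in Lemma~\ref{lem:equal-block-transversal} secures, together with the fact that $\alpha(h)$ lies in the coset $\widetilde y(h)$.

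There is also a degenerate case to check. When $m=1$ there are no intermediate double cosets, so $S=X$ and the patching step is vacuous; the argument above then applies to $S=X$ directly.
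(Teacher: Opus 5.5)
Your proposal is correct and follows the paper's proof essentially step for step. Like the paper, you patch the triples on $\mathcal D_1,\ldots,\mathcal D_{m-1}$ with a triple on $S=\mathcal D_0\mathbin{\dot\cup}\mathcal D_m$, obtained from Lemma~\ref{lem:local-wilcox} using the coset bijections of Lemma~\ref{lem:equal-block-coset-layer} and the transversal $\mathcal R\supseteq K$ of Lemma~\ref{lem:equal-block-transversal}. Your explicit checks that $S$ is a union of left and right $U_0$-cosets, and of the degenerate case $m=1$, are small additions that the paper leaves implicit.
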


\begin{proof}
By Lemma~\ref{lem:equal-block-double-cosets}, each $\mathcal D_j$, $1\leq j\leq m-1$, admits a complete mapping triple. Since $K=L_0\mathbin{\dot\cup}wL_0\subseteq\mathcal D_0\mathbin{\dot\cup}\mathcal D_m$, the prescribed complete mapping $\alpha$ only needs to be retained on the $\mathcal D_0$ and $\mathcal D_m$. Thus it remains to construct a complete mapping triple on $S:=\mathcal D_0\mathbin{\dot\cup}\mathcal D_m=Q_0\mathbin{\dot\cup}Q_0wQ_0$ that preserves $\alpha$ on $K$.

By Lemma~\ref{lem:equal-block-coset-layer}, there exist bijections $\widetilde x,\widetilde y,\widetilde z:I\longrightarrow S/U_0$ such that $\widetilde z(i)\subseteq\widetilde x(i)\widetilde y(i)$, $i\in I$, and, for $h\in K$, $\widetilde x(h)=hU_0$, $\widetilde y(h)=\alpha(h)U_0$, $\widetilde z(h)=h\alpha(h)U_0$.

By Lemma~\ref{lem:equal-block-transversal}, there exists a set $\mathcal R$ which is simultaneously a left and right $U_0$-transversal in $S$, and $K\subseteq \mathcal R$. For each $i\in I$, let $y_i$ be the unique element of $\mathcal R\cap\widetilde y(i)$. If $h\in K$, then $\widetilde y(h)=\alpha(h)U_0$ and $\alpha(h)\in K\subseteq\mathcal R$. Hence $y_h=\alpha(h)$.

Since the identity map on $U_0$ is a complete mapping, choose an associated complete mapping triple $a,b,c:J\to U_0$ with an index $j_0\in J$ satisfying $a(j_0)=b(j_0)=c(j_0)=1$. We may therefore apply Lemma~\ref{lem:local-wilcox}. For each $h\in K$, prescribe $x_h:=h$, $z_h:=h\alpha(h)$. As shown above, $y_h=\alpha(h)$. Then $x_h\in\widetilde x(h)$, $y_h\in\widetilde y(h)$, $z_h\in\widetilde z(h)$, and $z_h=x_hy_h$. Hence Lemma~\ref{lem:local-wilcox} yields a complete mapping triple $(A_S,B_S,C_S)$ on $S$ such that $A_S(h,j_0)=h$, $B_S(h,j_0)=\alpha(h)$, $h\in K$.

Finally, apply Lemma~\ref{lem:patching} to $(A_S,B_S,C_S)$ and the triples on the remaining double cosets. This gives a complete mapping triple $(A,B,C)$ on $X$ extending $(A_S,B_S,C_S)$.

Let $\theta:=B\circ A^{-1}$. Then $\theta$ is a complete mapping of $X$. Moreover, for every $h\in K$, $\theta(h)=B(A^{-1}(h))=B(h,j_0)=\alpha(h)$. Therefore $\theta|_K=\alpha$.
\end{proof}

\section{Over finite fields of odd characteristic}\label{sec:odd}

Throughout this section, assume that $q$ is odd and let $V=\F_q^n$. Retaining the notation of the preceding section, set $G=\SL_n(q)/Z$, $Z\leq Z(\SL_n(q))$, and let $\pi:\SL_n(q)\to G$ be the natural quotient map.

Our aim is to prove that $G$ admits a complete mapping stabilizing a Sylow $2$-subgroup. More precisely, for $n\geq2$, let $\mathsf{CM}(n)$ denote the following statement. We shall prove $\mathsf{CM}(n)$ for all $n\geq2$.

\begin{statement}
$\mathsf{CM}(n)$: for every $Z\leq Z(\SL_n(q))$, the group $\SL_n(q)/Z$ admits a complete mapping stabilizing a Sylow $2$-subgroup.
\end{statement}

The proof uses the two lifting constructions developed in Section~\ref{sec:linear-lifting}. We first establish $\mathsf{CM}(2)$. We then apply Proposition~\ref{pro:relative-equal-block-lifting} to prove $\mathsf{CM}(n)$ when $n$ is a power of $2$. Finally, for arbitrary $n$, we use its binary expansion to construct a suitable flag and apply Proposition~\ref{prop:one-step-flag-lifting}. The case $n=3$ with $q\equiv3\pmod4$ requires a separate argument, since the equal-dimensional decomposition used in the general construction does not yield a subgroup of odd index in this case. The parity results of Lemmas~\ref{lem:qbinom-parity} and \ref{lem:equal-qbinom-valuation} ensure that the Sylow $2$-subgroups are preserved in these lifting steps.

\subsection{The case $n=2$}

We first prove $\mathsf{CM}(2)$, starting with a lemma valid for both $q\equiv1,3\pmod4$.

\begin{lem}\label{lem:rank-one-normalizer}
Assume that $n=2$ and $q$ is odd. Let $\widehat T^{+}$ and $\widehat T^{-}$ be as defined in Definitions~\ref{def:Borel} and~\ref{def:nonsplit-torus}, respectively. Set
\[
\widehat T=
\begin{cases}
\widehat T^{+}, & q\equiv1\pmod4\\
\widehat T^{-}, & q\equiv3\pmod4
\end{cases},\quad T:=\pi(\widehat T),\quad \widehat N:=N_{\SL_2(q)}(\widehat T),\quad N:=\pi(\widehat N).
\]
Then $N$ contains some $P\in\mathrm{Syl}_2(G)$ and admits a complete mapping stabilizing $P$.

Moreover, if $q\equiv1\pmod4$, then $N=T^{+}\mathbin{\dot\cup}wT^{+}$, where $w=\pi\begin{pmatrix}0&-1\\1&0\end{pmatrix}$.
\end{lem}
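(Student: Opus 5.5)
The plan is to show that $N$ is (the image of) a group of the shape ``cyclic torus extended by $w$''. We then check by an order computation that $N$ has odd index, and finally split off the odd part of the torus as a normal subgroup so that Lemma~\ref{lem:normal-extension} applies.

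\textbf{Step 1: structure of $\widehat N$.} For $\widehat T=\widehat T^{+}$, a matrix normalizing the diagonal torus must permute its two eigenlines. Hence $\widehat N$ is the monomial subgroup
\[
\widehat N=\widehat T^{+}\mathbin{\dot\cup}\widehat w\widehat T^{+},\quad \widehat w=\begin{pmatrix}0&-1\\1&0\end{pmatrix},
\]
of order $2(q-1)$. This uses $q>3$ so that $\widehat T^{+}$ has distinct eigenvalues; if $q\equiv 1\pmod 4$ then $q\geq5$. Projecting by $\pi$, and noting that $Z\leq\{\pm I\}\leq\widehat T^{+}$, gives $N=T^{+}\mathbin{\dot\cup}wT^{+}$, which is the ``moreover'' assertion.

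For $\widehat T=\widehat T^{-}\cong C_{q+1}$, we regard $\widehat T^{-}$ as the norm-one elements of $\F_{q^2}^\times$ acting on $\F_{q^2}\cong V$. Its normalizer in $\GL_2(q)$ is $\widetilde T^{-}\rtimes\langle\text{Frobenius}\rangle$. Intersecting with $\SL_2(q)$ gives $\widehat N=\widehat T^{-}\mathbin{\dot\cup}\widehat w'\widehat T^{-}$ of order $2(q+1)$, where $\widehat w'$ inverts $\widehat T^{-}$. In both cases $[\widehat N:\widehat T]=2$, $\widehat T$ is cyclic, and conjugation by the extra coset inverts $\widehat T$.

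\textbf{Step 2: odd index.} We have $|\SL_2(q)|=q(q-1)(q+1)$. If $q\equiv1\pmod4$, then $\nu_2(q+1)=1$, so the $2$-part of $|\SL_2(q)|$ is $2^{1+\nu_2(q-1)}$, which equals the $2$-part of $|\widehat N|=2(q-1)$. If $q\equiv3\pmod4$, the symmetric computation with $\nu_2(q-1)=1$ gives the same conclusion for $|\widehat N|=2(q+1)$. Thus $[\SL_2(q):\widehat N]$ is odd. Since $Z\leq\widehat T\leq\widehat N$, we get $[G:N]=[\SL_2(q):\widehat N]$, which is odd. Hence any $P\in\mathrm{Syl}_2(N)$ lies in $\mathrm{Syl}_2(G)$.

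\textbf{Step 3: complete mapping.} Write the cyclic group $T=T_2\times T_{2'}$ with $T_{2'}$ of odd order. Since $T_{2'}$ is characteristic in $T\trianglelefteq N$, we have $T_{2'}\trianglelefteq N$. Moreover $N/T_{2'}$ is a $2$-group isomorphic to $P$, and $P\cap T_{2'}=1$. On $T_{2'}$ the identity map is a complete mapping, since the group has odd order, and it trivially stabilizes $P\cap T_{2'}=1$.

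For the quotient $N/T_{2'}\cong P$, every complete mapping stabilizes the whole group, so it suffices that $P$ is noncyclic. Then Theorem~\ref{thm:HP} supplies a complete mapping, and Lemma~\ref{lem:normal-extension} finishes the proof.

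\textbf{Main obstacle.} The delicate point is exactly this noncyclicity. If $Z=1$, then $P$ is generalized quaternion of order $2^{1+\nu_2(q\mp1)}\geq8$. If $Z=\{\pm I\}$, then $P$ is dihedral of order $2^{\nu_2(q\mp1)}\geq4$, which is the Klein four-group when $q\equiv\pm3\pmod8$. To verify this I would use that $w$ inverts $T$ and that $w^2=-I$ lies in the image. In every case $P$ is noncyclic, including the degenerate case $C_2^2$.
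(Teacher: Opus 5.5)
Your proof is correct and follows the paper's outline: odd index of $N$ from an order count, a noncyclic Sylow $2$-subgroup, and splitting off the odd part of the cyclic torus. The differences are in the tools used at each step.

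\begin{itemize}
\item \textbf{Normalizer.} The paper cites Bonnaf\'e for $[\widehat N:\widehat T]=2$ and for an element $\widehat s$ with $\widehat s^{2}=-I_2$ inverting $\widehat T^{-}$. You derive $\widehat N$ directly, via eigenlines in the split case and $\F_{q^2}^\times\rtimes\langle\text{Frobenius}\rangle$ in the nonsplit case. To make the nonsplit case airtight, state that $N_{\GL_2(q)}(\widehat T^{-})=N_{\GL_2(q)}(\widetilde T^{-})$, because $\widehat T^{-}$ spans $\F_{q^2}$ as an $\F_q$-algebra. Also note that the Frobenius has determinant $-1$, so you need an element of norm $-1$ to land in $\SL_2(q)$.

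\item \textbf{Noncyclicity.} The paper exhibits $\langle\widehat t,\widehat w\rangle\cong Q_8$ inside $\widehat N$, with $\widehat t\in\widehat T$ of order $4$, and notes its image is $Q_8$ or $C_2^2$. You identify $P$ completely as generalized quaternion or dihedral. This is more informative, but it uses a fact your sketch leaves implicit: every $x\in\widehat N\setminus\widehat T$ satisfies $x^2=-I_2$. The reason is that $x$ inverts $x^2\in\widehat T$, so $x^4=1$. Moreover $x\notin\{\pm I_2\}$, and $-I_2$ is the only involution of $\SL_2(q)$, so $x^2\neq I_2$.

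\item \textbf{Complete mapping.} You apply Lemma~\ref{lem:normal-extension} to $T_{2'}\trianglelefteq N$ with quotient $\cong P$, taking the identity on $T_{2'}$. The paper applies Lemma~\ref{lem:exact-factorization} to the exact factorization $N=RP$ with $R=T_{2'}$. Since $T_{2'}$ is a normal complement to $P$, both lemmas apply and give the same conclusion. The paper's route has the small advantage of not needing normality of $R$.
\end{itemize}
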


\begin{proof}
Since $Z(\SL_2(q))=\{\pm I_2\}$, we have $Z\leq\widehat T$. By \cite[(1.1.6), (1.1.13), and Corollary~1.3.2]{Bonnafe2011}, $\widehat N/\widehat T\cong C_2$. Since $Z\leq\widehat T$, $N/T\cong\widehat N/\widehat T\cong C_2$, and hence $[N:T]=2$. Moreover, $|\widehat T|=q-1$ or $q+1$ according as $q\equiv1$ or $3\pmod4$. Hence
\[
[G:N]=\frac{q(q+1)}{2}\quad\text{or}\quad\frac{q(q-1)}{2},
\]
respectively. In either case, $[G:N]$ is odd.

We next show that $N$ contains a noncyclic $2$-subgroup. Since $4\mid|\widehat T|$ and $\widehat T$ is cyclic, choose $\widehat t\in\widehat T$ of order $4$. Since $\widehat t$ has order $4$, its square is the unique involution of $\widehat T$, namely $-I_2$. Suppose first that $q\equiv1\pmod4$. Set
\[
\widehat w:=
\begin{pmatrix}
0&-1\\
1&0
\end{pmatrix},\quad
w:=\pi(\widehat w).
\]
Then $\widehat w\in\widehat N\setminus\widehat T^{+}$, and $\widehat w^{\,2}=-I_2$, $\widehat w\widehat t\widehat w^{-1}=\widehat t^{-1}$. Thus $\langle\widehat t,\widehat w\rangle\cong Q_8$. Now suppose that $q\equiv3\pmod4$. By \cite[(1.1.12)]{Bonnafe2011}, there exists $\widehat s\in\widehat N\setminus\widehat T^{-}$ such that $\widehat s^{\,2}=-I_2$, $\widehat s\widehat t\widehat s^{-1}=\widehat t^{-1}$. Hence $\langle\widehat t,\widehat s\rangle\cong Q_8$. Thus, in either case, $\widehat N$ contains a subgroup $\widehat Q\cong Q_8$ whose central involution is $-I_2$. If $Z=1$, then $\pi(\widehat Q)\cong Q_8$. If $Z=\{\pm I_2\}$, then $\pi(\widehat Q)\cong Q_8/\{\pm I_2\}\cong C_2\times C_2$. Therefore $N$ contains a noncyclic $2$-subgroup.

Choose $P\in\mathrm{Syl}_2(N)$ containing $\pi(\widehat Q)$. Then $P$ is noncyclic. Since $[G:N]$ is odd, we also have $P\in\mathrm{Syl}_2(G)$.

We now construct a complete mapping of $N$ stabilizing $P$. Since $T$ is cyclic, let $R$ be its unique largest odd-order subgroup, and write $|T|=2^a r$, $r$ odd. Since $[N:T]=2$ and $P\in\mathrm{Syl}_2(N)$, we have
\[
|R|=r,\quad|P|=2^{a+1}.
\]
Hence $R\cap P=1$ and $|RP|=|R||P|=|N|$. Thus $N=RP$ is an exact factorization. The group $R$ has odd order, so it admits a complete mapping. The group $P$ is a noncyclic $2$-group, so it admits a complete mapping by Theorem \ref{thm:HP}. Hence Lemma~\ref{lem:exact-factorization} extends a prescribed complete mapping of $P$ to a complete mapping of $N$ stabilizing $P$.
	
Finally, suppose that $q\equiv1\pmod4$. With $\widehat w$ and $w$ as above, $\widehat w\in\widehat N\setminus\widehat T^{+}$. Since $Z\leq\widehat T^{+}$, it follows that $w\in N\setminus T^{+}$. As $[N:T^{+}]=2$, $N=T^{+}\mathbin{\dot\cup}wT^{+}$.
\end{proof}

For later use, let, as in Definition~\ref{def:Borel},
\[
\widehat U=
\left\{
\begin{pmatrix}
1&x\\0&1
\end{pmatrix}:x\in\F_q
\right\},\quad
\widehat T^{+}=
\left\{
\begin{pmatrix}
a&0\\0&a^{-1}
\end{pmatrix}:a\in\F_q^\times
\right\},
\]
For $x\in\F_q$ and $a\in\F_q^\times$, write
\[
u(x):=\pi
\begin{pmatrix}
1&x\\0&1
\end{pmatrix},\quad
h(a):=\pi
\begin{pmatrix}
a&0\\0&a^{-1}
\end{pmatrix},
\]
and set
\[
U:=\pi(\widehat U)=\{u(x):x\in\F_q\},\quad T^{+}:=\pi(\widehat T^{+})=\{h(a):a\in\F_q^\times\}.
\]
Since $\widehat U\cap Z=1$ and $Z\leq\widehat T^{+}$, we have
\[
|U|=q,\quad|T^{+}|=\frac{q-1}{|Z|}.
\]

\begin{pro}\label{prop:n2-q3mod4}
Assume that $q\equiv3\pmod4$. Then $\mathsf{CM}(2)$ holds.
\end{pro}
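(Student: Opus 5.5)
The plan is to extend the complete mapping of $N$ from Lemma~\ref{lem:rank-one-normalizer} to all of $G$, following the same pattern as the proof of Proposition~\ref{prop:one-step-flag-lifting}. Take $\widehat T=\widehat T^{-}$ and $N=\pi(N_{\SL_2(q)}(\widehat T^{-}))$. By Lemma~\ref{lem:rank-one-normalizer}, $N$ contains some $P\in\mathrm{Syl}_2(G)$ and has a complete mapping $\alpha$ with $\alpha(P)=P$; moreover $[G:N]=q(q-1)/2$ is odd.

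The Borel subgroup is not suitable here. Since $q\equiv3\pmod4$, the number $(q-1)/2$ is odd, so $B$ has a cyclic Sylow $2$-subgroup of order at most $2$. The work therefore has to be done with $N$ itself.

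Decompose $G$ into $N$-double cosets, $G=N\mathbin{\dot\cup}D_1\mathbin{\dot\cup}\cdots\mathbin{\dot\cup}D_k$. On $N$ we use the triple $(h,\alpha(h),h\alpha(h))$. For each $D_j$ we want a complete mapping triple from Lemma~\ref{lem:single-double-coset}; this needs only that $N$ admits a complete mapping and that $D_j\subseteq D_j^2$. Lemma~\ref{lem:patching} then glues these triples into a triple on $G$ whose associated complete mapping $\theta$ satisfies $\theta|_N=\alpha$. Since $[G:N]$ is odd, $P\in\mathrm{Syl}_2(G)$ and $\theta(P)=\alpha(P)=P$.

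So everything reduces to proving $D\subseteq D^2$ for every nontrivial $N$-double coset $D$. I will translate this into geometry as in Lemma~\ref{lem:intersection-double-cosets}. The group $G$ acts transitively on the set $\Omega$ of nonsplit tori, equivalently on the $q(q-1)/2$ unordered pairs $\{\lambda,\lambda^q\}$ of conjugate points of $\mathbb P^1(\F_{q^2})\setminus\mathbb P^1(\F_q)$. The stabilizer of the base point $\omega_0$ is $N$. The double coset $NgN$ corresponds to the $N$-orbit of $g\omega_0$, that is, to an orbital $\mathcal O$ of $G$ on $\Omega$.

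The condition $g\in D^2$ for $g\in D$ means the following: there is $\omega=x\omega_0$ with $(\omega_0,\omega)\in\mathcal O$ and $(\omega,g\omega_0)\in\mathcal O$, since then $x\in D$ and $x^{-1}g\in D$. To make this concrete, I will attach to two elliptic points a $G$-invariant $\delta(\omega,\omega')\in\F_q$, obtained as a cross-ratio of $\lambda,\lambda^q,\mu,\mu^q$ normalized to lie in $\F_q$ (the analogue of hyperbolic distance). I expect this invariant to separate the orbitals, perhaps up to a square class. The triangle condition then becomes the following problem: given $\omega_0,\omega_1$ with $\delta(\omega_0,\omega_1)=c$, find an elliptic $\omega$ with $\delta(\omega_0,\omega)=\delta(\omega,\omega_1)=c$. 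Writing $\omega$ in coordinates, this is a conic-type system over $\F_q$, together with the condition that a certain quadratic polynomial be irreducible.

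The main obstacle is this counting step. I would first count the solutions with character sums (Weil bound or Jacobsthal sums), aiming for a count of $q/2-O(\sqrt q)$. That gives $D\subseteq D^2$ for all $q$ beyond some explicit bound, and the finitely many small $q\equiv3\pmod4$ (such as $3,7,11,19,\dots$) would be checked directly.

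If the orbitals are not all symmetric, $D^{-1}\neq D$ causes no problem; the same triangle count applies. For very small $q$, such as $q=3$, it may instead be easier to choose a Sylow $2$-subgroup with an explicit exact factorization of $G$ and apply Lemma~\ref{lem:exact-factorization}; for instance, $\PSL_2(3)\cong A_4=V_4C_3$.
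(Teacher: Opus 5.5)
\textbf{There is a genuine gap in your reduction.} You need $D\subseteq D^2$ for every nontrivial $N$-double coset $D$, so that Lemma~\ref{lem:single-double-coset} applies to each one. That claim is false, already for small $q$.
\begin{itemize}
\item For $q=3$, the subgroup $N$ is normal of index $3$: it is $Q_8\trianglelefteq\SL_2(3)$, respectively $V_4\trianglelefteq A_4$. So the nontrivial double cosets are $gN$ and $g^2N$, and $(gN)^2=g^2N\neq gN$.
\item For $q=7$, we have $\PSL_2(7)\cong\GL_3(2)$, and $N\cong D_8$ is a Sylow $2$-subgroup. Hence $N$ corresponds to a Borel subgroup $B$ of $\GL_3(2)$. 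The $N$-double cosets are the Bruhat cells $BwB$, $w\in S_3$, and the Tits-system rules give $(BstB)^2=BtsB\cup Bw_0B$, which does not contain $BstB$. Since $Z\le\widehat N$, the same failure occurs in $\SL_2(7)$.
\end{itemize}
So your planned direct check of small $q$ would come out negative.

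Your counting heuristic is also off. For fixed $\omega_0,\omega_1$, the admissible $\omega$ lie in the intersection of two $N$-orbit-type sets, each of size at most $q+1$, inside a set of size $q(q-1)/2$. In coordinates this intersection is cut out by two quadratic equations, so it has boundedly many points (B\'ezout), not $q/2-O(\sqrt q)$. A Weil-type estimate then has error larger than the main term and cannot decide existence. Existence is a genuine arithmetic condition, and the examples above show it can fail.

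\textbf{The missing idea} is your $q=3$ fallback made uniform. Let $T_0\le T^{+}$ be the subgroup of order $(q-1)/2$, and set $H=UT_0$.
\begin{itemize}
\item $|H|=q(q-1)/2$ is odd, and $|N|=2(q+1)/|Z|$.
\item These orders are coprime because $q\equiv 3\pmod 4$: $\gcd\bigl(q+1,\tfrac{q-1}{2}\bigr)\mid 2$, and $\tfrac{q-1}{2}$ is odd.
\item $|H|\,|N|=|G|$.
\end{itemize}
Hence $G=HN$ is an exact factorization, and Lemma~\ref{lem:exact-factorization} extends $\alpha$ from $N$ to $G$ at once, with $\theta(P)=P$. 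You set the Borel subgroup aside because its Sylow $2$-subgroup is too small. But its role here is not to carry $P$: its odd-order part is an exact complement to $N$.
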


\begin{proof}
By Lemma \ref{lem:rank-one-normalizer}, $N$ contains some $P\in\mathrm{Syl}_2(G)$ and admits a complete mapping $\alpha$ satisfying $\alpha(P)=P$. Moreover, $[G:N]=q(q-1)/2$.

Since $|Z|\in\{1,2\}$, the cyclic group $T^{+}$ contains a subgroup $T_0$ of order $(q-1)/2$. By Definition~\ref{def:Borel}, $\widehat T^{+}$ acts on $\widehat U$ by conjugation. Hence $T^{+}=\pi(\widehat T^{+})$ normalizes $U=\pi(\widehat U)$, and so does $T_0$. Thus $H:=UT_0$ is a subgroup of $G$. Since $\gcd(|U|,|T_0|)=\gcd\left(q,\frac{q-1}{2}\right)=1$, $U\cap T_0=1$. Hence $|H|=|U||T_0|=q(q-1)/2$. As $q\equiv3\pmod4$, $|H|$ is odd, so $H$ admits a complete mapping.

On the other hand, $|N|=|G|/[G:N]=2(q+1)/|Z|$. Since $q\equiv3\pmod4$, $\gcd(|H|,|N|)=1$, $|H||N|=|G|$. Therefore $G=HN$, $H\cap N=1$, so $G=HN$ is an exact factorization. By Lemma~\ref{lem:exact-factorization}, the prescribed complete mapping $\alpha$ of $N$ extends to a complete mapping $\theta$ of $G$. Since $P\leq N$, $\theta(P)=\alpha(P)=P$. Hence $\mathsf{CM}(2)$ holds.
\end{proof}

We now turn to the case $q\equiv1\pmod4$. Retain the notation $w$ from Lemma~\ref{lem:rank-one-normalizer}, and set
\[
Z_0:=\{c\in\F_q^\times:cI_2\in Z\},\quad\Lambda:=\F_q^\times/Z_0.
\]
Write $\bar a:=aZ_0\in\Lambda$. Then $|Z_0|=|Z|$, and the map $\F_q^\times\longrightarrow T^+$, $a\mapsto h(a)$, has kernel $Z_0$. Hence $T^+\cong\F_q^\times/Z_0=\Lambda$, and $h(a)$ depends only on the coset $\bar a=aZ_0$.

By Lemma~\ref{lem:rank-one-normalizer}, $N=T^+\mathbin{\dot\cup}wT^+$, and $N$ admits a complete mapping stabilizing a Sylow $2$-subgroup of $G$. Our strategy is to extend this complete mapping from $N$ to $G$. We shall apply Lemma~\ref{lem:local-wilcox} with $H=U$ and $S=G$. To this end, we first describe the relevant $U$-cosets and construct the required bijections on $G/U$, incorporating the prescribed complete mapping of $N$, and then construct a simultaneous left and right $U$-transversal of $G$.

\begin{lem}\label{lem:rank-one-cosets}
Let $N$ be as in Lemma~\ref{lem:rank-one-normalizer}. For $x\in\F_q$ and $\bar a\in\Lambda$, set $A_{\bar a}:=h(a)U$, $B_{x,\bar a}:=u(x)wh(a)U$. Then
\[
G/U=\{A_{\bar a}:\bar a\in\Lambda\}\mathbin{\dot\cup}\{B_{x,\bar a}:x\in\F_q,\ \bar a\in\Lambda\}.
\]
Among these, the $U$-cosets arising from $N$ are precisely $\{gU:g\in N\}=\{A_{\bar a},B_{0,\bar a}:\bar a\in\Lambda\}$. Finally, for $x\in\F_q$ and $a,b,\lambda\in\F_q^\times$,
\[
B_{x-\frac{1}{a^2\lambda},\,\overline{ab\lambda}}\subseteq B_{x,\bar a}B_{\lambda,\bar b}.
\]
\end{lem}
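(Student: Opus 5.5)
The plan is to use the Bruhat decomposition of Lemma~\ref{lem:SL2-Bruhat}, pushed down to $G$ by $\pi$. It gives
\[
G=T^{+}U\mathbin{\dot\cup}Uw T^{+}U .
\]
The left $U$-cosets in $T^{+}U$ are the sets $h(a)U$. Since $h(a)$ depends only on $\bar a$, these are exactly the $A_{\bar a}$, and they are distinct because $T^{+}\cap U=1$; this holds as $\widehat T^{+}\cap\widehat U=1$ and $Z\leq\widehat T^{+}$. The left $U$-cosets in $UwT^{+}U$ are the $B_{x,\bar a}$. To see that this parametrization is unique, I will repeat the argument used for $B_{Y,\ell}$ in Section~\ref{sec:linear-lifting}. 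Suppose $u(x)wh(a)U=u(x')wh(a')U$. Then $u(x-x')\in U\cap wBw^{-1}$, where $B=T^{+}U$. The group $wBw^{-1}$ is lower triangular modulo $Z$, so $U\cap wBw^{-1}=1$ and $x=x'$. It then follows that $h(a)^{-1}h(a')\in T^{+}\cap U=1$. As a sanity check, the count is right: $|\Lambda|+q|\Lambda|=(q+1)(q-1)/|Z|=|G|/q=[G:U]$.

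For the second assertion, Lemma~\ref{lem:rank-one-normalizer} gives $N=T^{+}\mathbin{\dot\cup}wT^{+}$. The cosets $h(a)U$ are the $A_{\bar a}$, and $wh(a)U=B_{0,\bar a}$, which yields the claimed list.

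The main step is the product inclusion, which comes down to a direct $2\times2$ matrix computation. I choose the element $u(x)wh(a)u(r)\in B_{x,\bar a}$, with $r$ to be determined, and the element $u(\lambda)wh(b)\in B_{\lambda,\bar b}$. Their product is
\[
u(x)\,w\,h(a)\,u(r+\lambda)\,w\,h(b).
\]
I will set $s:=r+\lambda$ and take $s$ nonzero. The key identity is, in $\SL_2(q)$,
\[
w\,u(s)\,w=u(-s^{-1})\,w\,h(-s^{-1})\,u(-s^{-1}),
\]
possibly up to a sign convention that I will fix by computing
\[
\begin{pmatrix}0&-1\\1&0\end{pmatrix}
\begin{pmatrix}1&s\\0&1\end{pmatrix}
\begin{pmatrix}0&-1\\1&0\end{pmatrix}
=\begin{pmatrix}-1&0\\-s&-1\end{pmatrix}.
\]
This lower triangular matrix then has to be rewritten in the form $u\,w\,h\,u$, and that rewriting is the real work.

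I will then conjugate $h(a)$ through $u$, using $h(a)u(s)h(a)^{-1}=u(a^2s)$, and move $h(b)$ to the right past $U$, which changes only the $U$-part. The outcome is
\[
u\!\left(x-\tfrac{1}{a^2s}\right)w\,h(\pm\, a\,s\,b)\,U .
\]
I choose $s=\lambda$, that is $r=0$. This choice yields exactly $B_{x-\frac{1}{a^2\lambda},\,\overline{ab\lambda}}$, up to the signs.

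The main obstacle is the bookkeeping of signs: $-1$ versus $1$ in the torus entry, and the sign of $1/(a^2\lambda)$. Since $q\equiv1\pmod4$, the element $-1$ is a square, but it need not lie in $Z_0$. So I will carry out the matrix identity carefully and, if a stray sign appears, absorb it by choosing $r$ appropriately, for instance taking $s=-\lambda$. The required containment holds as soon as one product lands in the target coset, because a product of a left coset with another coset is a union of left cosets $gU$.
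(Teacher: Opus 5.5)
Your route is the paper's: push the Bruhat decomposition of Lemma~\ref{lem:SL2-Bruhat} down to $G$, check that the parametrization of the cosets is unique, read off the cosets of $N=T^{+}\mathbin{\dot\cup}wT^{+}$, and prove the inclusion with a single product taking $r=0$. Your uniqueness argument via $U\cap wT^{+}Uw^{-1}=1$ is equivalent to the paper's entry comparison, and the first two parts are fine.

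In the key step, however, both of your displayed formulas are wrong. In fact
\[
\widehat w\,\widehat u(s)\,\widehat w=\begin{pmatrix}-1&0\\ s&-1\end{pmatrix},\qquad w\,u(s)\,w=u(-s^{-1})\,w\,h(s)\,u(-s^{-1}),
\]
so the torus entry is $s$, not $-s^{-1}$. Combine this with $h(a)u(\lambda)=u(a^2\lambda)h(a)$ and $h(a)w=wh(a^{-1})$. This gives exactly the paper's identity
\[
u(x)wh(a)u(\lambda)wh(b)=u\Bigl(x-\frac{1}{a^2\lambda}\Bigr)wh(ab\lambda)\,u\Bigl(-\frac{1}{b^2\lambda}\Bigr),
\]
with no stray sign. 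So your choice $r=0$ works.

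Drop the fallback of absorbing a sign by changing $s$; that freedom does not exist. The product of your two elements lies in $B_{x-\frac{1}{a^2s},\,\overline{abs}}$, which depends only on $s=r+\lambda$. (Note that $a^2$ depends only on $\bar a$, since $Z_0\subseteq\{\pm1\}$.) By uniqueness of the parametrization, this is the target coset only when $s=\lambda$. Taking $s=-\lambda$ would fix a sign in the torus entry only by turning $x-\frac{1}{a^2\lambda}$ into $x+\frac{1}{a^2\lambda}$. So the inclusion rests on the identity being exactly right, and it is, once the computation is done correctly.
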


\begin{proof}
Since $Z\leq\widehat T^{+}$, applying $\pi$ to the Bruhat decomposition in Lemma~\ref{lem:SL2-Bruhat} gives
\[
G=T^{+}U\mathbin{\dot\cup}UwT^{+}U.
\]
Hence every left $U$-coset of $G$ has the form $A_{\bar a}$ or $B_{x,\bar a}$.

We show that these representations are unique. If $A_{\bar a}=A_{\bar b}$, then $h(a)^{-1}h(b)\in T^{+}\cap U$. Since $|U|=q$ and $|T^{+}|=(q-1)/|Z|$ are coprime, $U\cap T^{+}=1$. Hence $h(a)=h(b)$, and so $\bar a=\bar b$. If $B_{x,\bar a}=B_{y,\bar b}$, then for some $z\in\F_q$, $u(x)wh(a)=u(y)wh(b)u(z)$ in $G$. Hence, for some $c\in Z_0$,
\[
\begin{pmatrix}1&x\\0&1\end{pmatrix}
\widehat w
\begin{pmatrix}a&0\\0&a^{-1}\end{pmatrix}
=cI_2
\begin{pmatrix}1&y\\0&1\end{pmatrix}
\widehat w
\begin{pmatrix}b&0\\0&b^{-1}\end{pmatrix}
\begin{pmatrix}1&z\\0&1\end{pmatrix}.
\]
Comparing entries gives $z=0$, $x=y$, and $a=cb$. Thus $\bar a=\bar b$. The two families are disjoint by Lemma \ref{lem:SL2-Bruhat}.

We next determine the $U$-cosets arising from $N$. By Lemma~\ref{lem:rank-one-normalizer}, $N=T^{+}\mathbin{\dot\cup}wT^{+}$. Hence an element $h(a)\in T^{+}$ determines the $U$-coset $h(a)U=A_{\bar a}$, while every element of $wT^+$ has the form $wh(a)$ and determines the $U$-coset $wh(a)U=B_{0,\bar a}$. Hence $\{gU:g\in N\}=\{A_{\bar a},B_{0,\bar a}:\bar a\in\Lambda\}$.

Finally, a direct calculation gives
\[
u(x)wh(a)u(\lambda)wh(b)=u\left(x-\frac{1}{a^2\lambda}\right)wh(ab\lambda)u\left(-\frac{1}{b^2\lambda}\right).
\]
Since $u\left(-\frac{1}{b^2\lambda}\right)\in U$, the right-hand side lies in $B_{\,x-\frac{1}{a^2\lambda},\,\overline{ab\lambda}}$. Moreover, $B_{x,\bar a}B_{\lambda,\bar b}$ is invariant under right multiplication by $U$. Hence
\[
B_{\,x-\frac{1}{a^2\lambda},\,\overline{ab\lambda}}
\subseteq
B_{x,\bar a}B_{\lambda,\bar b}.\qedhere
\]
\end{proof}

The $U$-cosets arising from $N$ will be handled by the prescribed complete mapping of $N$. We next consider the $U$-cosets outside $N$, namely $B_{x,\bar a}$ with $x\in\F_q^\times$. For this purpose, we first need the following auxiliary choice.

\begin{lem}\label{lem:rank-one-choice}
For each $\bar a=aZ_0\in\Lambda$, one can choose $t_{\bar a}\in\F_q^\times$ such that $1+a^2t_{\bar a}\neq0$, and the map $\rho:\Lambda\longrightarrow\Lambda$, $\bar a\mapsto\overline{t_{\bar a}}$, is a permutation.
\end{lem}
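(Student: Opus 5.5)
The plan is to reduce the statement to a matching problem on $\Lambda$ and then handle the two possible sizes of $Z_0$ separately.

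\textbf{Reduction.} Since $q\equiv1\pmod4$, we have $Z\leq Z(\SL_2(q))=\{\pm I_2\}$, so $Z_0\subseteq\{\pm1\}$. Hence $a^2$, and so the forbidden value $f(\bar a):=-a^{-2}\in\F_q^\times$, depends only on $\bar a$. The condition $1+a^2t_{\bar a}\neq0$ says exactly $t_{\bar a}\neq f(\bar a)$. The construction therefore has two stages. First choose a permutation $\sigma$ of $\Lambda$. Then, for each $\bar a$, choose a representative $t_{\bar a}\in\sigma(\bar a)$ with $t_{\bar a}\neq f(\bar a)$. After that, $\rho=\sigma$ is a permutation by construction.

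\textbf{Case $|Z_0|=2$.} Each class $\sigma(\bar a)=\{t,-t\}$ has two elements, and only the single value $f(\bar a)$ is excluded. So a valid representative always exists, and we may simply take $\sigma=\mathrm{id}_\Lambda$.

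\textbf{Case $Z_0=1$.} Here $\Lambda=\F_q^\times$ and representatives are forced, so we need a permutation $\sigma$ of $\F_q^\times$ with $\sigma(a)\neq f(a)$ for all $a$. I would obtain it from Hall's marriage theorem applied to the bipartite graph on $\F_q^\times\times\F_q^\times$ in which $a$ is joined to every $t\neq f(a)$. Let $S$ be a set of left vertices with $|S|=k$.
\begin{itemize}
\item If $f$ takes at least two values on $S$, then every $t$ is adjacent to some $a\in S$, so $N(S)$ is everything.
\item If $f$ is constant on $S$, then $|N(S)|=q-2$. This is at least $k$ unless $S=\F_q^\times$.
\end{itemize}
The remaining possibility, $f$ constant on all of $\F_q^\times$, would force $a^2$ to be constant on $\F_q^\times$. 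That is impossible because $q\geq5$. So Hall's condition holds, and a perfect matching gives the required $\sigma$.

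The only step needing care is this Hall verification, and it is elementary. As an alternative, one could write down an explicit $\sigma$, for instance $\sigma(a)=ca$ for a suitable constant $c$ avoiding $-a^{-3}$. That works only when cubing is not surjective, so I would rely on the Hall argument instead.
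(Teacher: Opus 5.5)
Your proposal is correct and follows the paper's proof. For $|Z_0|=2$ you take $\rho=\mathrm{id}_\Lambda$ with a representative of $aZ_0$ avoiding $-a^{-2}$, and for $Z_0=1$ you apply Hall's theorem to the bipartite graph with $a\sim t\iff t\neq -a^{-2}$. The Hall check differs only cosmetically: you use $|S|\leq q-2$ unless $S=\F_q^\times$ (excluded because $a^2$ is not constant when $q\geq5$), whereas the paper notes that a constant forbidden value forces $|A|\leq 2\leq q-2$.
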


\begin{proof}
If $|Z_0|=2$, then $aZ_0$ has two elements, so we may choose $t_{\bar a}\in aZ_0$ with $t_{\bar a}\neq-a^{-2}$. Then $1+a^2t_{\bar a}\neq0$ and $\rho(\bar a)=\overline{t_{\bar a}}=\bar a$. Hence $\rho=\mathrm{id}_{\Lambda}$ is a permutation.

Suppose that $Z_0=1$, so $\Lambda=\F_q^\times$. Let $\Gamma$ be the bipartite graph with two copies of $\F_q^\times$, where $a\sim t$ if $t\neq-a^{-2}$. For $A\subseteq\F_q^\times$, if $A=\varnothing$, then $|N_\Gamma(A)|=0=|A|$, so the condition in Hall's theorem \cite[Theorem~2.1.2]{Diestel2017} is satisfied. Hence we may assume that $A\neq\varnothing$. If $-a^{-2}\neq-b^{-2}$ for some $a,b\in A$, then
$$
N_\Gamma(A)=\bigcup_{c\in A}\bigl(\F_q^\times\setminus\{-c^{-2}\}\bigr)\supseteq\bigl(\F_q^\times\setminus\{-a^{-2}\}\bigr)\cup\bigl(\F_q^\times\setminus\{-b^{-2}\}\bigr)=\F_q^\times.
$$
If no such pair exists, then $-a^{-2}=-b^{-2}$ for all $a,b\in A$. Thus $A\subseteq\{a_0,-a_0\}$ for some $a_0\in A$, so $|A|\leq2$, while $N_\Gamma(A)=\F_q^\times\setminus\{-a_0^{-2}\}$ has size $q-2\geq2$. Hence $|N(A)|\geq|A|$ in all cases. By Hall's theorem \cite[Theorem~2.1.2]{Diestel2017}, $\Gamma$ has a perfect matching. If $t_a$ is matched to $a$, then $t_a\neq-a^{-2}$, so $1+a^2t_a\neq0$. Since the matching is perfect, the map $\rho:\Lambda\longrightarrow\Lambda$, $\rho(a)=t_a$, is a permutation.
\end{proof}

We now construct the three bijections on $G/U$ required for Lemma~\ref{lem:local-wilcox}.

\begin{lem}\label{lem:rank-one-coset-maps}
Let $\alpha$ be a prescribed complete mapping of $N$. Then there exist an index set $I$ and bijections $\widetilde x,\widetilde y,\widetilde z:I\longrightarrow G/U$ such that $\widetilde z(i)\subseteq\widetilde x(i)\widetilde y(i)$, $i\in I$. Moreover, we may take $N\subseteq I$ so that, for every $h\in N$, $\widetilde x(h)=hU$, $\widetilde y(h)=\alpha(h)U$, $\widetilde z(h)=h\alpha(h)U$.
\end{lem}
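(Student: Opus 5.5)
I would mirror the proof of Lemma~\ref{lem:equal-block-coset-layer}, with $U$ in place of $U_0$, $T^{+}$ in place of $L_0$, and Lemma~\ref{lem:rank-one-cosets} supplying the coset description and the product inclusion. Take
\[
I:=N\mathbin{\dot\cup}\bigl(\F_q^\times\times\Lambda\bigr).
\]
On the $N$-part define $\widetilde x(h)=hU$, $\widetilde y(h)=\alpha(h)U$ and $\widetilde z(h)=h\alpha(h)U$. By Lemma~\ref{lem:rank-one-cosets}, $h\mapsto hU$ is a bijection from $N$ onto $\{A_{\bar a},B_{0,\bar a}\}$, because $|N|=2|\Lambda|$ equals the number of these cosets. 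Since $\alpha$ and $h\mapsto h\alpha(h)$ are permutations of $N$, all three maps are bijections onto this set. The inclusion $h\alpha(h)U\subseteq (hU)(\alpha(h)U)$ is trivial.

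It remains to handle the cosets $B_{x,\bar a}$ with $x\neq0$. Here I would use the inclusion
\[
B_{x-\frac{1}{a^2\lambda},\,\overline{ab\lambda}}\subseteq B_{x,\bar a}B_{\lambda,\bar b}.
\]
The natural choice is $\lambda=x$, i.e.\ $\widetilde y(x,\bar a)=B_{x,\bar b}$, which keeps the first coordinate of $\widetilde y$ bijective. The first coordinate of $\widetilde z$ is then $x-1/(a^2x)$, and this must be nonzero and must permute $\F_q^\times$ for each fixed $\bar a$. That is awkward. A better route is to decouple the coordinates using Lemma~\ref{lem:rank-one-choice}: set $\lambda=x\,t$ with $t=t_{\bar a}$, possibly rescaled.

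Concretely, I would try the following. Put
\[
\widetilde x(x,\bar a)=B_{x,\bar a},\qquad \widetilde y(x,\bar a)=B_{\lambda,\bar b}\ \text{ with } \lambda=x^{-1}a^{-2}t_{\bar a}^{-1}\text{ and a suitable }\bar b.
\]
Then $1/(a^2\lambda)=x\,t_{\bar a}$, so the first coordinate of $\widetilde z$ is $x(1-t_{\bar a})$. To make this nonzero I would re-parametrise so that the relevant factor is $1+a^2t_{\bar a}$. Taking $\lambda=-x^{-1}/(a^4t_{\bar a})$ gives first coordinate $x(1+a^2t_{\bar a})\neq0$, which is a bijection in $x$ for fixed $\bar a$. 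The second coordinate of $\widetilde z$ is then $\overline{ab\lambda}$. For $\widetilde y$ to be bijective, the pair $(\lambda,\bar b)$ must run over $\F_q^\times\times\Lambda$. For fixed $\bar a$, $\lambda$ is a bijective function of $x$, but $\lambda$ depends on the representative $a$, which must be fixed modulo $Z_0$. So I would fix representatives and, when $|Z_0|=2$, use that $\pm a$ give the same $a^2$.

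To get global bijectivity I would choose $\bar b$ as $\bar b=\phi(\overline{\ldots})$, with $\phi$ a complete mapping of the cyclic group $\Lambda$. When $|\Lambda|$ is even and the Sylow $2$-subgroup of $\Lambda$ is nontrivial cyclic, no such $\phi$ exists, so I would instead make $\bar b$ depend on both $x$ and $\bar a$ so that $\overline{ab\lambda}$ and $\bar b$ are simultaneously bijective. The cleanest way is to set $\bar b$ to equal $\overline{a^{-1}\lambda^{-1}}\cdot\rho(\bar a)$, up to fixed factors. Then $\overline{ab\lambda}=\rho(\bar a)$, and for fixed $x$ the map $\bar a\mapsto\bar b$ is bijective, because $\lambda$ involves $a^{-4}$ and the whole expression reduces to a power of $a$ times $\rho(\bar a)$.

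This bookkeeping is the main obstacle. The task is to choose $\lambda$ and $\bar b$ so that three conditions hold at once: $(x,\bar a)\mapsto(\lambda,\bar b)$ is a bijection, $(x,\bar a)\mapsto\bigl(x-1/(a^2\lambda),\overline{ab\lambda}\bigr)$ is a bijection onto $\F_q^\times\times\Lambda$, and the first coordinate never vanishes. Lemma~\ref{lem:rank-one-choice} is designed precisely to secure the nonvanishing $1+a^2t_{\bar a}\neq0$ together with the permutation $\rho$. I expect the intended choice to be one in which $\overline{ab\lambda}$ becomes $\rho(\bar a)$ or a similar permutation, and $\lambda$ is a bijective function of $x$ for each fixed $\bar a$. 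I would verify bijectivity by checking injectivity fibrewise, first in the $\Lambda$-coordinate and then in the $\F_q^\times$-coordinate.
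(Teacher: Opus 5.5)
Your index set $I=N\mathbin{\dot\cup}(\F_q^\times\times\Lambda)$, your treatment of the $N$-part, and your plan to invoke Lemma~\ref{lem:rank-one-choice} all match the paper, and your $\widetilde z$ is bijective. The gap is in $\widetilde y$, and it is structural rather than bookkeeping: your scheme cannot work. You force the $\Lambda$-coordinate of $\widetilde z$ to be $\overline{ab\lambda}=\sigma(\bar a)$, where $\sigma$ is $\rho$ or any permutation of $\bar a$ alone, up to constant factors. Then $\bar b\,\bar\lambda=\sigma(\bar a)\bar a^{-1}$. With $\lambda=-x^{-1}a^{-4}t_{\bar a}^{-1}$, each fixed value $\lambda_0$ is attained by exactly one $x$ for every $\bar a$. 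On that fibre the values of $\bar b$ are $\sigma(\bar a)\bar a^{-1}\bar\lambda_0^{-1}$. So $(x,\bar a)\mapsto(\lambda,\bar b)$ is injective only if $\bar a\mapsto\bar a^{-1}\sigma(\bar a)$ is a permutation. That would make $\bar a\mapsto\bar a^{-1}\sigma(\bar a)$ a complete mapping of $\Lambda$. But $\Lambda$ is cyclic of even order $(q-1)/|Z_0|$ (as $4\mid q-1$), so this is impossible by Theorem~\ref{thm:HP}. This is exactly the obstruction you noticed for $\phi$, reappearing in disguise. Concretely, when $|Z_0|=2$ one has $\rho=\mathrm{id}$, so $\bar b\bar\lambda=1$ and $\widetilde y$ takes at most $q-1$ values. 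Your test that $\bar a\mapsto\bar b$ is bijective for fixed $x$ is not the right criterion for joint injectivity, and it does not hold in general either.

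The missing idea is to let the $\Lambda$-coordinate of $\widetilde z$ depend on $x$, so that $\bar a$ is recovered from $\widetilde z$ as a ratio. You also need to recover $\bar a$ from $\widetilde y$ through an $x$-free invariant other than $\overline{b\lambda}$. The paper sets
\[
d_{\bar a}=\frac{a^2t_{\bar a}}{1+a^2t_{\bar a}}\notin\{0,1\},\qquad \lambda=\frac{1}{a^2(1-d_{\bar a})x}.
\]
It then defines $\widetilde y(x,\bar a)=B_{\lambda,\bar b}$ with $b=a^2d_{\bar a}(1-d_{\bar a})x^2$, and $\widetilde z(x,\bar a)=B_{d_{\bar a}x,\,\overline{ad_{\bar a}x}}$. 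Then $x-1/(a^2\lambda)=d_{\bar a}x$ and $ab\lambda=ad_{\bar a}x$, so the inclusion of Lemma~\ref{lem:rank-one-cosets} gives $\widetilde z\subseteq\widetilde x\,\widetilde y$. The map $\widetilde z$ is injective because $\bar a=\overline{ad_{\bar a}x}\cdot\overline{d_{\bar a}x}^{\,-1}$. The map $\widetilde y$ is injective because $b\lambda^2=t_{\bar a}$: thus $\overline{b\lambda^2}=\rho(\bar a)$ determines $\bar a$, and then $\lambda$ determines $x$. In this scheme $\rho$ need only be a permutation, not an orthomorphism, which is exactly what Lemma~\ref{lem:rank-one-choice} supplies.
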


\begin{proof}
Put $I_1:=\F_q^\times\times\Lambda$, $I:=N\mathbin{\dot\cup}I_1$. We define the three bijections separately on $N$ and $I_1$.

For $h\in N$, set
\[
\widetilde x(h):=hU,\quad\widetilde y(h):=\alpha(h)U,\quad\widetilde z(h):=h\alpha(h)U.
\]
By Lemma~\ref{lem:rank-one-normalizer}, $|N|=2|T^+|=2(q-1)/|Z|$. Since $|U|=q$, we have $\gcd(|N|,|U|)=1$, and hence $N\cap U=1$. So $h\mapsto hU$ is a bijection from $N$ onto the $U$-cosets arising from $N$. Since $\alpha$ and $h\mapsto h\alpha(h)$ are permutations of $N$, the same holds for $\widetilde y$ and $\widetilde z$.

We now define the bijections on $I_1$. For $\bar a\in\Lambda$, let $t_{\bar a}$ be chosen as in Lemma~\ref{lem:rank-one-choice}, and for $x\in\F_q^\times$, set
\begin{equation*}\label{eq: lambda}
d_{\bar a}:=\frac{a^2t_{\bar a}}{1+a^2t_{\bar a}},\quad
\lambda(x,\bar a):=\frac{1}{a^2(1-d_{\bar a})x}.
\end{equation*}
Since $t_{\bar a}\neq0$ and $1+a^2t_{\bar a}\neq0$, we have $d_{\bar a}\neq0,1$, so both expressions are well defined. For $(x,\bar a)\in I_1=\F_q^\times\times\Lambda$, define
\begin{align*}
\widetilde x(x,\bar a):=B_{x,\bar a},\quad\widetilde y(x,\bar a):=B_{\lambda(x,\bar a),\,\overline{a^2d_{\bar a}(1-d_{\bar a})x^2}},\quad\widetilde z(x,\bar a):=B_{d_{\bar a}x,\,\overline{ad_{\bar a}x}}.
\end{align*}

By Lemma~\ref{lem:rank-one-cosets}, the $U$-cosets not arising from $N$ are precisely $\{B_{x,\bar a}:x\in\F_q^\times,\ \bar a\in\Lambda\}$, with unique such representations. Hence $\widetilde x$ is a bijection from $I_1$ onto this set.

To prove that $\widetilde y$ is injective, suppose that $\widetilde y(x,\bar a)=B_{\lambda(x,\bar a),\bar b}$, where $b=a^2d_{\bar a}(1-d_{\bar a})x^2$. By the uniqueness of the representation $B_{\lambda(x,\bar a),\bar b}$, $\lambda(x,\bar a)$ and $\bar b$ are uniquely determined. By the definitions of $d_{\bar a}$ and $\lambda(x,\bar a)$,
\[
b\lambda(x,\bar a)^2=\frac{d_{\bar a}}{a^2(1-d_{\bar a})}=t_{\bar a}.
\]
Hence $\overline{b\lambda(x,\bar a)^2}=\overline{t_{\bar a}}=\rho(\bar a)$, where $\rho$ is the permutation defined in Lemma \ref{lem:rank-one-choice}. Thus $\rho(\bar a)$ is determined by $B_{\lambda(x,\bar a),\bar b}$. Since $\rho$ is a permutation of $\Lambda$, $\bar a$ is uniquely determined. Consequently, $a^2$ and $d_{\bar a}$ are determined, and by the definition of $\lambda(x,\bar a)$, $x=\frac{1}{a^2(1-d_{\bar a})\lambda(x,\bar a)}$ is uniquely determined as well. Therefore $\widetilde y$ is injective, and hence bijective.

Similarly, by the uniqueness of the representation $\widetilde z(x,\bar a)=B_{d_{\bar a}x,\,\overline{ad_{\bar a}x}}$, its value determines $d_{\bar a}x$ and $\overline{ad_{\bar a}x}$. Hence $\bar a=\overline{ad_{\bar a}x}\,\overline{d_{\bar a}x}^{\, -1}$ is uniquely determined. Thus $d_{\bar a}$ is determined, and so is $x=\frac{d_{\bar a}x}{d_{\bar a}}$. Therefore $\widetilde z$ is injective, and hence bijective.

It remains to verify $\widetilde z(i)\subseteq\widetilde x(i)\widetilde y(i)$, $i\in I$. For $(x,\bar a)\in I_1$, set $\lambda:=\lambda(x,\bar a)$, $b:=a^2d_{\bar a}(1-d_{\bar a})x^2$. By the definition of $\lambda(x,\bar a)$, we have $x-\frac{1}{a^2\lambda}=d_{\bar a}x$, $ab\lambda=ad_{\bar a}x$. Hence Lemma~\ref{lem:rank-one-cosets} gives
\[
\widetilde z(x,\bar a)=B_{d_{\bar a}x,\,\overline{ad_{\bar a}x}}
\subseteq B_{x,\bar a}B_{\lambda,\,\overline{b}}=\widetilde x(x,\bar a)\widetilde y(x,\bar a).
\]
Next, let $h\in N$. Since $1\in U$, $\widetilde z(h)=h\alpha(h)U\subseteq hU\,\alpha(h)U=\widetilde x(h)\widetilde y(h)$. Thus the required inclusion holds for every $i\in I$.

On $N$, each of $\widetilde x,\widetilde y,\widetilde z$ is a bijection onto the $U$-cosets arising from $N$, while on $I_1$, each is a bijection onto the remaining $U$-cosets. Since $I=N\mathbin{\dot\cup}I_1$, $\widetilde x,\widetilde y,\widetilde z:I\to G/U$ are all bijections.
\end{proof}

It remains to construct the simultaneous left and right $U$-transversal required by Lemma~\ref{lem:local-wilcox}.

\begin{lem}\label{lem:rank-one-transversal}
Set $\mathcal T:=\{h(a):\bar a\in\Lambda\}\mathbin{\dot\cup}\{u(x)wh(a)u(x):x\in\F_q,\bar a\in\Lambda\}$. Then $\mathcal T$ is simultaneously a transversal for the left and right $U$-cosets of $G$. Moreover, $N\subseteq\mathcal T$.
\end{lem}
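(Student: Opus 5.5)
We follow the pattern of Lemma~\ref{lem:equal-block-transversal}: first show that $\mathcal T$ is a left $U$-transversal of $G$, then show that $\mathcal T^{-1}=\mathcal T$, and finally use inversion to turn left cosets into right cosets.

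\emph{Left transversal.} By Lemma~\ref{lem:rank-one-cosets}, the left $U$-cosets of $G$ are exactly the $A_{\bar a}$ and the $B_{x,\bar a}$, and each has a unique label. The element $h(a)$ lies in $A_{\bar a}$. Since $u(x)\in U$, we have $u(x)wh(a)u(x)U=u(x)wh(a)U=B_{x,\bar a}$. Also $h(a)$ depends only on $\bar a$, while $u(x)wh(a)u(x)$ depends only on $(x,\bar a)$. Hence each left coset contains exactly one element of $\mathcal T$. The two families in $\mathcal T$ are disjoint because they lie in different Bruhat cells (Lemma~\ref{lem:SL2-Bruhat}).

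\emph{Closure under inversion.} For the first family, $h(a)^{-1}=h(a^{-1})\in\mathcal T$. For the second family,
\[
\bigl(u(x)wh(a)u(x)\bigr)^{-1}=u(-x)h(a)^{-1}w^{-1}u(-x).
\]
Compute in $\SL_2(q)$: $\widehat w^{-1}=-\widehat w$, and $\widehat w\,\diag(a,a^{-1})\,\widehat w^{-1}=\diag(a^{-1},a)$. It follows that $h(a)^{-1}w^{-1}=w\,h(a)\,\pi(-I_2)$. Since $-I_2=\diag(-1,(-1)^{-1})$, we have $\pi(-I_2)=h(-1)$. Therefore
\[
h(a)^{-1}w^{-1}=w\,h(-a),
\]
and the inverse equals $u(-x)wh(-a)u(-x)\in\mathcal T$. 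The only point needing care is this sign bookkeeping, and it is absorbed by letting $a\mapsto -a$ in $\Lambda$. So $\mathcal T^{-1}=\mathcal T$.

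\emph{Right transversal.} Inversion maps the left $U$-cosets $gU$ bijectively onto the right $U$-cosets $Ug^{-1}$. Since $\mathcal T$ is a left transversal, $\mathcal T^{-1}=\mathcal T$ is a right transversal.

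\emph{The inclusion $N\subseteq\mathcal T$.} By Lemma~\ref{lem:rank-one-normalizer}, $N=T^+\mathbin{\dot\cup}wT^+$. Every element of $T^+$ is some $h(a)$, which lies in the first family. Every element of $wT^+$ is $wh(a)=u(0)wh(a)u(0)$, which lies in the second family with $x=0$.
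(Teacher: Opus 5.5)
Your proposal is correct and follows essentially the same route as the paper: the left-transversal property from the unique coset labels in Lemma~\ref{lem:rank-one-cosets}, then $\mathcal T^{-1}=\mathcal T$ via $h(a)^{-1}=h(a^{-1})$ and $\bigl(u(x)wh(a)u(x)\bigr)^{-1}=u(-x)wh(-a)u(-x)$, then inversion to get the right-transversal property, and finally $N=T^{+}\mathbin{\dot\cup}wT^{+}\subseteq\mathcal T$. The only difference is that you verify the sign bookkeeping $h(a)^{-1}w^{-1}=wh(-a)$, using $\widehat w^{-1}=-\widehat w$ and $\pi(-I_2)=h(-1)$, which the paper leaves implicit.
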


\begin{proof}
By Lemma~\ref{lem:rank-one-cosets}, $h(a)U=A_{\bar a}$, $u(x)wh(a)u(x)U=B_{x,\bar a}$. Hence $\mathcal T$ is a transversal for the left $U$-cosets of $G$.

Moreover, $h(a)^{-1}=h(a^{-1})$ and $\bigl(u(x)wh(a)u(x)\bigr)^{-1}=u(-x)wh(-a)u(-x)$. Thus $\mathcal T^{-1}=\mathcal T$. Since inversion interchanges left and right $U$-cosets, $\mathcal T$ is also a transversal for the right $U$-cosets.

Finally, $T^{+}\subseteq\mathcal T$, $wh(a)=u(0)wh(a)u(0)\in\mathcal T$. By Lemma~\ref{lem:rank-one-normalizer}, $N=T^+\mathbin{\dot\cup}wT^+$. Hence $N\subseteq\mathcal T$.
\end{proof}

\begin{pro}\label{pro:SL2-one-mod-four}
Assume that $q\equiv1\pmod4$. Then $\mathsf{CM}(2)$ holds.
\end{pro}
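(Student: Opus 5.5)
By Lemma~\ref{lem:rank-one-normalizer}, $N$ contains some $P\in\mathrm{Syl}_2(G)$ and has a complete mapping $\alpha$ with $\alpha(P)=P$. Since $[G:N]$ is odd, it is enough to extend $\alpha$ to a complete mapping $\theta$ of $G$ with $\theta|_N=\alpha$; then $\theta(P)=\alpha(P)=P$. We obtain this extension from Lemma~\ref{lem:local-wilcox}, taking $H=U$ and $S=G$.

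\textbf{Hypotheses of Lemma~\ref{lem:local-wilcox}.} Since $|U|=q$ is odd, the identity map is a complete mapping of $U$. We use its triple $a(j)=j$, $b(j)=j$, $c(j)=j^2$ with $J=U$, and take $j_0=1$, so that $a(j_0)=b(j_0)=c(j_0)=1$. Lemma~\ref{lem:rank-one-coset-maps} supplies bijections $\widetilde x,\widetilde y,\widetilde z:I\to G/U$ with $N\subseteq I$, satisfying $\widetilde z(i)\subseteq\widetilde x(i)\widetilde y(i)$, and taking the prescribed values $hU,\ \alpha(h)U,\ h\alpha(h)U$ for $h\in N$. Lemma~\ref{lem:rank-one-transversal} supplies $\mathcal T$, a simultaneous left and right $U$-transversal of $G$ with $N\subseteq\mathcal T$.

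\textbf{Choice of representatives.} For each $i\in I$, let $y_i$ be the unique element of $\mathcal T\cap\widetilde y(i)$. If $h\in N$, then $\alpha(h)\in N\subseteq\mathcal T$ and $\alpha(h)\in\widetilde y(h)$, so $y_h=\alpha(h)$. For $h\in N$ we also prescribe $x_h=h\in\widetilde x(h)$ and $z_h=h\alpha(h)\in\widetilde z(h)$. These satisfy $z_h=x_hy_h$.

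\textbf{Conclusion.} With these choices, the additional assertion of Lemma~\ref{lem:local-wilcox} gives a complete mapping triple $(A,B,C)$ of $G$ with $A(h,1)=h$ and $B(h,1)=\alpha(h)$ for every $h\in N$. Let $\theta=B\circ A^{-1}$. Then $\theta$ is a complete mapping of $G$ and $\theta|_N=\alpha$. Hence $\theta(P)=P$, and $\mathsf{CM}(2)$ holds.

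Every step is a direct invocation of an earlier result. The only point needing care is that $y_h$ really equals $\alpha(h)$. This is exactly why $\mathcal T$ must contain all of $N$, which Lemma~\ref{lem:rank-one-transversal} guarantees.
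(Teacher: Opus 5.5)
Your proposal is correct and follows the paper's proof essentially step for step. You apply Lemma~\ref{lem:local-wilcox} with $H=U$, $S=G$, the identity triple on $U$ with $j_0=1$, the coset bijections of Lemma~\ref{lem:rank-one-coset-maps}, and the transversal $\mathcal T\supseteq N$ of Lemma~\ref{lem:rank-one-transversal}, which forces $y_h=\alpha(h)$ and hence gives $\theta|_N=\alpha$.
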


\begin{proof}
By Lemma~\ref{lem:rank-one-normalizer}, $N$ contains some $P\in\mathrm{Syl}_2(G)$ and admits a complete mapping $\alpha$ satisfying $\alpha(P)=P$. Let $\widetilde x,\widetilde y,\widetilde z:I\longrightarrow G/U$ be the bijections given by Lemma~\ref{lem:rank-one-coset-maps}, and let $\mathcal T$ be the simultaneous left and right $U$-transversal given by Lemma~\ref{lem:rank-one-transversal}.

For each $i\in I$, let $y_i$ be the unique element of $\mathcal T\cap\widetilde y(i)$. For $h\in N$, we have $\widetilde y(h)=\alpha(h)U$. Since $\alpha(h)\in N\subseteq\mathcal T$, it follows that $y_h=\alpha(h)$.

To apply Lemma~\ref{lem:local-wilcox} with $H=U$ and $S=G$, we need a complete mapping triple of $U$. Since $q$ is odd, the identity map $\mathrm{id}_U$ is a complete mapping, because $u(x)\mathrm{id}_U(u(x))=u(x)^2=u(2x)$, and $x\mapsto2x$ is a permutation of $\F_q$. Thus, taking $J=U$, we obtain the complete mapping triple $(a,b,c)$ given by $a(u)=u$, $b(u)=u$, $c(u)=u^2$. Set $j_0=1_U$. Then $a(j_0)=b(j_0)=c(j_0)=1$.

For every $h\in N$, prescribe $x_h:=h$, $z_h:=h\alpha(h)$. Then $x_h\in\widetilde x(h)$, $y_h=\alpha(h)\in\widetilde y(h)$, $z_h\in\widetilde z(h)$, and $z_h=x_hy_h$. Hence Lemma~\ref{lem:local-wilcox} yields a complete mapping triple $(A,B,C)$ on $G$, with common index set $I\times J$, such that $A(h,j_0)=h$, $B(h,j_0)=\alpha(h)$, $h\in N$.

Define $\theta:=B\circ A^{-1}$. Then $\theta$ is a complete mapping of $G$, and for every $h\in N$, $\theta(h)=B(A^{-1}(h))=B(h,j_0)=\alpha(h)$. Thus $\theta|_N=\alpha$, and so $\theta(P)=\alpha(P)=P$. Hence $\mathsf{CM}(2)$ holds.
\end{proof}

\subsection{The case $n=2^a$ with $a\geq2$}

Throughout this subsection, we retain from Subsection~\ref{subsec:equal-block-lifting} the notation $E_1,E_2,F,X,\widehat L_0,L_0,\widehat w,w,K$, and assume that $s=\dim F=0$.

We prove $\mathsf{CM}(2^a)$ for all $a\geq1$ by induction on $a$. The case $a=1$, namely $\mathsf{CM}(2)$, was proved in the preceding subsection. Assume $a\geq2$ and set $m=2^{a-1}$. Then
\[
V=E_1\oplus E_2,\quad \dim E_1=\dim E_2=m,
\]
and $X=G$. We first construct a complete mapping of $K$ stabilizing a Sylow $2$-subgroup, and then apply Proposition~\ref{pro:relative-equal-block-lifting}.

\begin{lem}\label{lem:power-two-index}
We have $[G:K]=\frac{1}{2}q^{m^2}\binom{2m}{m}_q$, which is odd.
\end{lem}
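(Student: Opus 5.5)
The plan is to compute $|K|$ explicitly by lifting to $\SL(V)$, and then read off the $2$-adic valuation of the index from Lemma~\ref{lem:equal-qbinom-valuation}.

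First, every element of $Z$ is scalar, so it lies in $\widehat L_0\leq\widehat K$. Hence $Z\leq\widehat K$, and therefore $[G:K]=[\SL(V):\widehat K]$. From the decomposition $\widehat K=\widehat L_0\mathbin{\dot\cup}\widehat w\widehat L_0$ we get $|\widehat K|=2|\widehat L_0|$.

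Next we count $\widehat L_0$. It consists of the elements $\diag(A,B)$ with $A,B\in\GL_m(q)$ and $\det A\det B=1$. Since $(A,B)\mapsto\det A\det B$ is surjective onto $\F_q^\times$,
\[
|\widehat L_0|=\frac{|\GL_m(q)|^2}{q-1}.
\]

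Now use the standard orders
\[
|\SL_{2m}(q)|=\frac{|\GL_{2m}(q)|}{q-1},\qquad |\GL_r(q)|=q^{\binom r2}\prod_{i=1}^r(q^i-1).
\]
Dividing, we obtain
\[
[\SL(V):\widehat K]=\frac{|\GL_{2m}(q)|}{2|\GL_m(q)|^2}=\frac12\,q^{\binom{2m}{2}-2\binom m2}\,\frac{\prod_{i=1}^{2m}(q^i-1)}{\left(\prod_{i=1}^m(q^i-1)\right)^2}.
\]
The exponent is $\binom{2m}{2}-2\binom m2=m(2m-1)-m(m-1)=m^2$. The quotient of products equals $\binom{2m}{m}_q$ by the product formula in Lemma~\ref{lem:qbinom-parity}. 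This gives $[G:K]=\frac12 q^{m^2}\binom{2m}{m}_q$.

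For parity, $q^{m^2}$ is odd because $q$ is odd. Since $a\geq2$, the integer $m=2^{a-1}\geq2$ is a power of $2$, so Lemma~\ref{lem:equal-qbinom-valuation}(2) gives $\nu_2\bigl(\binom{2m}{m}_q\bigr)=1$. Hence $\frac12\binom{2m}{m}_q$ is odd, and so $[G:K]$ is odd.

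The argument is routine. The main points to check are that $Z\leq\widehat K$, so that passing to the quotient does not change the index, and the arithmetic of the $q$-exponent.
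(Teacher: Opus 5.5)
Your proof is correct, but it takes a different route from the paper. The paper argues geometrically. It shows that $G$ is transitive on unordered decompositions $V=E_1'\oplus E_2'$ with $\dim E_1'=\dim E_2'=m$, and identifies $K$ with the setwise stabilizer $G_{\{E_1,E_2\}}$. It then counts such decompositions by orbit--stabilizer, citing Godsil for the counts: $\binom{2m}{m}_q$ choices of $E_1'$, each with $q^{m^2}$ complements, halved because the pair is unordered.

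You instead compute $|K|$ directly. Since $Z\leq\widehat L_0\leq\widehat K$, the index equals $[\SL(V):\widehat K]$. Then $|\widehat K|=2|\GL_m(q)|^2/(q-1)$ reduces everything to the standard order formula for $\GL_r(q)$ and a check of the $q$-exponent. Your version is more self-contained: it needs neither the transitivity argument, nor the identification $K=G_{\{E_1,E_2\}}$, nor the external count of complements. It uses only the decomposition $\widehat K=\widehat L_0\mathbin{\dot\cup}\widehat w\widehat L_0$ already recorded in the paper.

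The paper's version explains the index as a count of geometric objects, and that is the viewpoint it reuses for $[X:K]$ when $s>0$ in Proposition~\ref{prop:general-r-2}. Your order computation adapts there just as easily, since the $\GL_s(q)$ factors cancel. The parity step via Lemma~\ref{lem:equal-qbinom-valuation}(2) is identical in both.
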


\begin{proof}
The group $G$ acts transitively on the unordered decompositions
\[
V=E_1'\oplus E_2',\quad\dim E_1'=\dim E_2'=m.
\]
Indeed, given two decompositions $V=E_1'\oplus E_2'$ and $V=E_1''\oplus E_2''$, there exists $g_0\in\GL(V)$ such that $g_0(E_i')=E_i''$ for $i=1,2$. Choose $t\in\GL(V)$ preserving both $E_1''$ and $E_2''$ and satisfying $\det(t)=\det(g_0)^{-1}$. Then $tg_0\in\SL(V)$ and $tg_0(E_i')=E_i''$, $i=1,2$. Hence $G$ acts transitively on such decompositions, and therefore also on the corresponding unordered decompositions.

By the definition of $K$ in Subsection \ref{subsec:equal-block-lifting}, $K=L_0\mathbin{\dot\cup}wL_0$, where $L_0$ fixes $E_1$ and $E_2$ individually, while $w$ interchanges them. Therefore $K=G_{\{E_1,E_2\}}$. Moreover, by \cite[Theorem~1.1.1 and Lemma~1.1.2]{GodsilFiniteGeometry}, there are $\binom{2m}{m}_q$ choices for $E_1'$, and each $E_1'$ has $q^{m^2}$ complements. Since every unordered decomposition corresponds to two ordered ones,
\[
[G:K]=\frac{1}{2}q^{m^2}\binom{2m}{m}_q.
\]
Since $q$ is odd and $m\geq2$ is a power of $2$, Lemma~\ref{lem:equal-qbinom-valuation} gives $\nu_2\left(\binom{2m}{m}_q\right)=1$. So $[G:K]$ is odd.
\end{proof}

To construct the required complete mapping of $K$, we first introduce a subgroup $M\leq L_0$ and establish the following auxiliary result.

\begin{lem}\label{lem:power-two-M}
Set $\widehat M:=\{\diag(A,B):A,B\in\SL_m(q)\}$, $M:=\pi(\widehat M)$. If $\mathsf{CM}(m)$ holds, then $M$ admits a complete mapping stabilizing any prescribed Sylow $2$-subgroup.
\end{lem}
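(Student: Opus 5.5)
The plan is to write $M$ as an extension of one copy of $\SL_m(q)$ by a central quotient of another, and then apply Lemma~\ref{lem:normal-extension}, using $\mathsf{CM}(m)$ for both the normal subgroup and the quotient. A conjugation argument then handles the word ``any''.

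\emph{Conjugation.} The first step is to reduce ``any prescribed Sylow $2$-subgroup'' to ``some Sylow $2$-subgroup''. If $\theta$ is a complete mapping of a group $Y$ with $\theta(P)=P$ and $g\in Y$, then $\theta^g(x):=g\theta(g^{-1}xg)g^{-1}$ is again a complete mapping of $Y$, because $x\theta^g(x)=g\bigl((g^{-1}xg)\theta(g^{-1}xg)\bigr)g^{-1}$. Moreover $\theta^g$ stabilizes $gPg^{-1}$. Since Sylow $2$-subgroups are conjugate, it suffices throughout to produce a complete mapping stabilizing some Sylow $2$-subgroup, and we may freely use whichever Sylow subgroup a lemma requires.

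\emph{Structure of $M$.} Next I determine $M\cap$ scalars. Every element of $Z$ has the form $cI_{2m}$ with $c^{2m}=1$, and $cI_{2m}\in\widehat M$ exactly when $c^m=1$. Hence $\widehat M\cap Z=\{cI_{2m}\in Z:c^m=1\}$ and $M\cong\widehat M/(\widehat M\cap Z)$. Put
\[
\widehat N:=\{\diag(A,I_m):A\in\SL_m(q)\},\qquad N:=\pi(\widehat N).
\]
Then $\widehat N\trianglelefteq\widehat M$. Also $\widehat N\cap Z=1$, since a scalar $cI_{2m}$ with $I_m$ in the second block forces $c=1$. Therefore $N\cong\SL_m(q)$ and $N\trianglelefteq M$.

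\emph{The quotient.} The map $\diag(A,B)\mapsto B$ induces a surjection $M\to\SL_m(q)/Z_1$, where $Z_1:=\{cI_m: cI_{2m}\in\widehat M\cap Z\}\leq Z(\SL_m(q))$. Its kernel is the image of $\{\diag(A,B): B\in Z_1\}$. Such an element equals $\diag(Ac^{-1},I_m)$ modulo $Z$, so the kernel is exactly $N$. Hence $M/N\cong\SL_m(q)/Z_1$.

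\emph{Applying the lemma.} By $\mathsf{CM}(m)$, applied once with trivial central subgroup and once with $Z_1$, both $N$ and $M/N$ admit complete mappings stabilizing Sylow $2$-subgroups. Fix $P\in\mathrm{Syl}_2(M)$. Since $N$ is normal, $P\cap N\in\mathrm{Syl}_2(N)$ and $PN/N\in\mathrm{Syl}_2(M/N)$. By the conjugation remark, we may choose the complete mappings to stabilize exactly these subgroups. Lemma~\ref{lem:normal-extension} then yields a complete mapping of $M$ stabilizing $P$, and the conjugation remark extends this to every Sylow $2$-subgroup.

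The main obstacle is only bookkeeping: correctly identifying $\widehat M\cap Z$ and the induced central subgroup $Z_1$ of the quotient. Once that is done, the argument is a formal application of Lemma~\ref{lem:normal-extension}.
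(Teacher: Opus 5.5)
Your proposal is correct and follows the paper's proof. Both take the normal subgroup $\pi(\{\diag(A,I_m)\})\cong\SL_m(q)$ with quotient $\SL_m(q)/Z_1$, where your $Z_1$ coincides with the paper's $\{\lambda I_m:\lambda I_{2m}\in Z,\ \lambda^m=1\}$, and then apply Lemma~\ref{lem:normal-extension}. Your explicit conjugation argument just spells out what the paper cites as ``the conjugacy of Sylow $2$-subgroups.''
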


\begin{proof}
Set
\[
\widehat M_1:=\{\diag(A,I_m):A\in\SL_m(q)\},\quad M_1:=\pi(\widehat M_1).
\]
Since $\widehat M_1\trianglelefteq\widehat M$, we have $M_1\trianglelefteq M$. Moreover, $\widehat M_1\cap Z=1$, so $M_1\cong\SL_m(q)$.

Set
\[
Z_1:=\{\lambda I_m:\lambda I_{2m}\in Z,\ \lambda^m=1\}\leq Z(\SL_m(q)).
\]
Then $\varphi:M\longrightarrow\SL_m(q)/Z_1$, $\varphi\bigl(\pi(\diag(A,B))\bigr):=BZ_1$ is a well-defined surjective homomorphism. If $\varphi(\pi(\diag(A,B)))=Z_1$, then $B=\lambda I_m$ for some $\lambda I_{2m}\in Z\cap\widehat M$, and $\pi(\diag(A,B))=\pi(\diag(\lambda^{-1}A,I_m))\in M_1$. Thus $\ker\varphi=M_1$, and hence $M/M_1\cong\SL_m(q)/Z_1$.

Let $P_M\in\mathrm{Syl}_2(M)$. Since $M_1\trianglelefteq M$,
$$
P_M\cap M_1\in\mathrm{Syl}_2(M_1),\quad P_MM_1/M_1\in\mathrm{Syl}_2(M/M_1).
$$
Since $M_1\cong\SL_m(q)$ and $M/M_1\cong\SL_m(q)/Z_1$ with $Z_1\leq Z(\SL_m(q))$, $\mathsf{CM}(m)$, together with the conjugacy of Sylow $2$-subgroups, gives complete mappings of $M_1$ and $M/M_1$ stabilizing $P_M\cap M_1$ and $P_MM_1/M_1$, respectively. Thus Lemma~\ref{lem:normal-extension} yields a complete mapping of $M$ stabilizing $P_M$.
\end{proof}

We now use $M$ to construct the required complete mapping of $K$ and to verify that $L_0$ admits a complete mapping.

\begin{lem}\label{lem:power-two-K}
Assume that $\mathsf{CM}(m)$ holds. Then $L_0$ admits a complete mapping, and $K$ admits a complete mapping stabilizing a Sylow $2$-subgroup.
\end{lem}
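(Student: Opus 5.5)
The plan is to place $M$ as a normal subgroup of both $L_0$ and $K$ with quotients that are easy to handle, and then apply Lemma~\ref{lem:normal-extension} twice, using Lemma~\ref{lem:power-two-M} for the normal part.

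\emph{Structure of $L_0$ and $K$.} First, $\widehat M\trianglelefteq\widehat L_0$ and $\widehat M\trianglelefteq \widehat K$, since $\widehat w$ swaps the two diagonal blocks. Hence $M\trianglelefteq L_0$ and $M\trianglelefteq K$. Next, the map $\diag(A,B)\mapsto\det A$ sends $\widehat L_0$ onto $\F_q^\times$ with kernel $\widehat M$, because $\det B=(\det A)^{-1}$. Therefore $L_0/M$ is a quotient of the cyclic group $\F_q^\times\cong C_{q-1}$, so $L_0/M$ is cyclic. Finally, $[K:L_0]=2$, so $|K/M|=2|L_0/M|$.

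\emph{$L_0$ admits a complete mapping.} If $|L_0/M|$ is odd, the identity map is a complete mapping of $L_0/M$. Lemma~\ref{lem:power-two-M} gives a complete mapping of $M$. Lemma~\ref{lem:normal-extension} then gives a complete mapping of $L_0$; its Sylow-stabilization clause is not needed here, only the construction. If $|L_0/M|$ is even, the quotient is nontrivial cyclic of even order and has no complete mapping, so this route fails. In that case I would instead argue with Theorem~\ref{thm:HP}: a Sylow $2$-subgroup of $L_0$ contains $P_M\in\mathrm{Syl}_2(M)$. Since $m\ge2$, $P_M$ contains $\pi$ of a Sylow $2$-subgroup of $\SL_m(q)\times 1$, which is noncyclic (quaternion or larger) when $m\geq2$; its image under $\pi$ stays noncyclic because that factor meets $Z$ trivially. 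A $2$-group containing a noncyclic subgroup is noncyclic, so Theorem~\ref{thm:HP} gives a complete mapping of $L_0$.

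\emph{$K$ admits a complete mapping stabilizing a Sylow $2$-subgroup.} The approach is to apply Lemma~\ref{lem:normal-extension} to $M\trianglelefteq K$. It then suffices to show that $K/M$ has a complete mapping stabilizing a Sylow $2$-subgroup. The quotient $K/M$ is generated by the cyclic image $\bar L_0$ of $L_0$ together with $\bar w$. Conjugation by $\widehat w$ sends $\diag(A,B)$ to $\diag(B,A)$, so $\bar w$ acts on $\bar L_0$ by $\det A\mapsto\det B=(\det A)^{-1}$, i.e.\ by inversion. Hence $K/M$ is a quotient of a dihedral-type extension of a cyclic group, with $\bar w^2=\pi(-I)$ coming from $L_0$.

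\emph{Main obstacle.} Such groups can have cyclic Sylow $2$-subgroups, for instance $C_2$ when $|L_0/M|$ is odd. In that case $K/M$ has no complete mapping at all, and this route fails. If it does, I would fall back on a subgroup $M'\geq M$: either $M'=L_0$, or $M'$ of index $2$ in $K$ containing $M$ with $K/M'\cong C_2$. Since $C_2$ has no complete mapping, neither choice works directly. The more robust approach is therefore an exact factorization $K=RP$ with $R$ of odd order, as in Lemma~\ref{lem:rank-one-normalizer}, or $K=M\cdot C$ with $C$ a complement. Concretely, I would take $C=\langle \pi(\diag(\lambda,1,\dots,1)),\ \text{suitable }w'\rangle$, adjusted so that $C\cap M=1$. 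I would then use Lemma~\ref{lem:exact-factorization} to extend the complete mapping of $M$ stabilizing $P_M$, combined with a complete mapping of $C$, via Lemma~\ref{lem:local-wilcox}-style arguments. Achieving $C\cap M=1$ with $C$ admitting a complete mapping is the crux I expect to be hardest. I would first check whether $|L_0/M|$ and $|Z|$ allow a complement of odd order, and treat the $2$-part by enlarging $P_M$ to a Sylow $2$-subgroup $P_K$ of $K$ and working with $P_K$ directly; $P_K$ is noncyclic, so Theorem~\ref{thm:HP} applies to it.
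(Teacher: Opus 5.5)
There is a genuine gap in the part about $K$. Your treatment of $L_0$ is fine: $\widehat M_1=\{\diag(A,I_m)\}$ meets $Z$ trivially, and $\SL_m(q)$ with $m\geq2$ has noncyclic Sylow $2$-subgroups, so Theorem~\ref{thm:HP} applies to $L_0$. The paper reaches the same conclusion by noting that $M$ has even order and admits a complete mapping.

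For $K$, you leave open the possibility that $|L_0/M|$ is odd and then fall back on alternatives that are not carried out. As described, they would not work. An exact factorization $K=MC$ combined with Lemma~\ref{lem:exact-factorization} only guarantees that $P_M$ is stabilized. But $P_M$ is not a Sylow $2$-subgroup of $K$, since $[K:M]=2|L_0/M|$ is even. ``Working with $P_K$ directly'' is not an argument.

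The missing step is a short computation showing that your obstacle never occurs.
\begin{enumerate}
\item Since $\diag(A,B)\mapsto\det A$ maps $\widehat L_0$ onto $\F_q^\times$ with kernel $\widehat M$, you get $L_0/M\cong\widehat L_0/\widehat MZ\cong\F_q^\times/E$, where $E=\{\lambda^m:\lambda I_{2m}\in Z\}$.
\item From $\lambda^{2m}=1$ you get $E\leq\{\pm1\}$. If $E=1$, then $d:=|L_0/M|=q-1$ is even.
\item If $-1\in E$, some $\lambda$ with $\lambda^m=-1$ has order exactly $2m$, because $m$ is a power of $2$. Hence $4\mid 2m\mid q-1$, and $d=(q-1)/2$ is still even.
\item Also $\widehat w^{\,2}=\diag(-I_m,-I_m)\in\widehat M$ because $m$ is even. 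So $wM$ has order $2$, and together with your inversion observation this gives $K/M\cong D_{2d}$ with $d$ even.
\item Its Sylow $2$-subgroup $\overline P$ is dihedral of order at least $4$, hence noncyclic. Let $R$ be the odd-order subgroup of the cyclic group $L_0/M$; then $K/M=R\overline P$ is an exact factorization.
\end{enumerate}
Lemma~\ref{lem:exact-factorization} now gives a complete mapping of $K/M$ stabilizing $\overline P$. Lemma~\ref{lem:normal-extension}, together with Lemma~\ref{lem:power-two-M} for $M$, then finishes the proof along exactly the route you first proposed.
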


\begin{proof}
We first show that $M\trianglelefteq L_0$ and determine $L_0/M$. Consider the surjective homomorphism
\[
\delta:\widehat L_0\longrightarrow\F_q^\times,\quad\diag(A,B)\mapsto\det A.
\]
Clearly, $\ker\delta=\widehat M$. Set $E:=\delta(Z)=\{\lambda^m:\lambda I_{2m}\in Z\}$. Then $\delta$ induces a homomorphism
\[
\overline\delta:L_0\longrightarrow\F_q^\times/E,\quad \pi(x)\mapsto\delta(x)E,\quad x\in\widehat L_0.
\]
This map is well defined. Indeed, if $\pi(x)=\pi(y)$, then $y=xz$ for some $z\in Z$, and hence $\delta(y)E=\delta(x)E$. We claim that $\ker\overline\delta=M$. Indeed, if $\pi(x)\in\ker\overline\delta$, then $\delta(x)\in E=\delta(Z)$, so $\delta(x)=\delta(z)$ for some $z\in Z$. Hence $xz^{-1}\in\ker\delta=\widehat M$, and therefore $\pi(x)=\pi(xz^{-1})\in M$. The reverse inclusion is immediate. Thus $\ker\overline\delta=M$, so $M\trianglelefteq L_0$, and
\[
L_0/M\cong\F_q^\times/E.
\]
Let $d:=|\F_q^\times/E|$. We claim that $d$ is even. If $\lambda I_{2m}\in Z$, then $1=\det(\lambda I_{2m})=\lambda^{2m}$, so $(\lambda^m)^2=1$. Since $q$ is odd, $\lambda^m\in\{\pm1\}$, and hence $E\leq\{\pm1\}$. If $E=1$, then $d=q-1$ is even. If $E=\{\pm1\}$, there exists $\lambda I_{2m}\in Z$ such that $\lambda^m=-1$. Since $m$ is a power of $2$, $\lambda$ has order $2m$, so $4\mid 2m\mid q-1$. Thus $d=(q-1)/2$ is again even. Hence $L_0/M$ is cyclic of even order $d$.

We next show that $M\trianglelefteq K$ and determine $K/M$. Since $\widehat w$ interchanges $E_1$ and $E_2$, for $\diag(A,B)\in\widehat M$ we have
\[
\widehat w\diag(A,B)\widehat w^{-1}=\diag(B,A)\in\widehat M.
\]
Thus $w$ normalizes $M$. Since $M\trianglelefteq L_0$ and $K=L_0\mathbin{\dot\cup}wL_0$, it follows that $M\trianglelefteq K$. Moreover,
\[
K/M=(L_0/M)\mathbin{\dot\cup}(wM)(L_0/M)=\langle L_0/M,wM\rangle.
\]
For $g=\pi(\diag(A,B))\in L_0$, we have $wgw^{-1}=\pi(\diag(B,A))$. Since $\det A\det B=1$,
\[
\overline\delta(wgw^{-1})=\overline\delta(\pi(\diag(B,A)))=\det(B)E=(\det A)^{-1}E=((\det A)E)^{-1}=\overline\delta(g)^{-1}.
\]
It follows that $(wM)(gM)(wM)^{-1}=g^{-1}M$. Thus conjugation by $wM$ acts as inversion on $L_0/M$. Also, $w^2\in M$ and $w\notin L_0$, so $wM$ has order $2$. Recall that $L_0/M$ is cyclic of order $d$. Hence
\[
K/M\cong C_d\rtimes C_2\cong D_{2d}.
\]

We now construct a complete mapping of $K$ stabilizing a Sylow $2$-subgroup. Fix $P\in\mathrm{Syl}_2(K)$ and set $\overline P:=PM/M\in\mathrm{Syl}_2(K/M)$. Since $K/M\cong D_{2d}$, $\overline P$ is noncyclic.

Write $d=2^b d'$, where $b\geq1$ and $d'$ is odd, and let $R\leq L_0/M$ have order $d'$. Since $|K/M|=2d=2^{b+1}d'$, we have $|\overline P|=2^{b+1}$. As $|R|=d'$ is odd, $R\cap\overline P=1$, and hence $|R\overline P|=|R||\overline P|=|K/M|$. Therefore
\[
K/M=R\overline P,\quad R\cap\overline P=1.
\]
The group $R$ has odd order, while $\overline P$ is a noncyclic $2$-group. Hence both admit complete mappings, and Lemma \ref{lem:exact-factorization} gives a complete mapping of $K/M$ stabilizing $\overline P$. Since $M\trianglelefteq K$ and $P\in\mathrm{Syl}_2(K)$, $P\cap M\in\mathrm{Syl}_2(M)$. By Lemma~\ref{lem:power-two-M}, $M$ admits a complete mapping stabilizing $P\cap M$. Lemma~\ref{lem:normal-extension} therefore yields a complete mapping of $K$ stabilizing $P$.

It remains to show that $L_0$ admits a complete mapping. By Lemma~\ref{lem:power-two-M}, $M$ admits a complete mapping. Moreover, $\pi(\diag(-I_m,I_m))$ is a nontrivial involution of $M$, so $|M|$ is even. Hence, by Theorem \ref{thm:HP}, any $Q\in\mathrm{Syl}_2(M)$ is noncyclic. Choose $P_0\in\mathrm{Syl}_2(L_0)$ with $Q\leq P_0$. Then $P_0$ is noncyclic, and hence Theorem \ref{thm:HP} implies that $L_0$ admits a complete mapping.
\end{proof}

\begin{pro}\label{pro:power-two}
For every $a\geq1$, $\mathsf{CM}(2^a)$ holds.
\end{pro}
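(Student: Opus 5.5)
We argue by induction on $a$. The base case $a=1$, i.e.\ $\mathsf{CM}(2)$, is given by Proposition~\ref{prop:n2-q3mod4} when $q\equiv3\pmod4$ and by Proposition~\ref{pro:SL2-one-mod-four} when $q\equiv1\pmod4$. So let $a\geq2$, put $m=2^{a-1}$, and assume $\mathsf{CM}(m)$. Fix any $Z\leq Z(\SL_{2m}(q))$ and $G=\SL_{2m}(q)/Z$. Use the setup of Subsection~\ref{subsec:equal-block-lifting} with $V=E_1\oplus E_2$, $\dim E_1=\dim E_2=m$ and $s=0$. Then $X=G$.

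By Lemma~\ref{lem:power-two-K}, which applies because $\mathsf{CM}(m)$ holds, $L_0$ admits a complete mapping, and $K$ admits a complete mapping $\alpha$ with $\alpha(P)=P$ for some $P\in\mathrm{Syl}_2(K)$. These are exactly the hypotheses of Proposition~\ref{pro:relative-equal-block-lifting}. That proposition gives a complete mapping $\theta$ of $X=G$ with $\theta|_K=\alpha$, and hence $\theta(P)=\alpha(P)=P$.

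It remains to see that $P$ is a Sylow $2$-subgroup of $G$. Lemma~\ref{lem:power-two-index} shows that $[G:K]=\tfrac12 q^{m^2}\binom{2m}{m}_q$ is odd. Hence $|P|$ equals the $2$-part of $|G|$, so $P\in\mathrm{Syl}_2(G)$, and $\theta$ is a complete mapping of $G$ stabilizing $P$. Since $Z$ was arbitrary, $\mathsf{CM}(2^a)$ follows and the induction is complete.

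No step needs new work, because all the substantive content sits in the earlier lemmas. The one point to watch is that the induction hypothesis is used for every central subgroup of $\SL_m(q)$, not only for $\SL_m(q)$ itself. Lemma~\ref{lem:power-two-M} applies $\mathsf{CM}(m)$ both to $M_1\cong\SL_m(q)$ and to the quotient $M/M_1\cong\SL_m(q)/Z_1$. This is why $\mathsf{CM}(n)$ is formulated to range over all $Z\leq Z(\SL_n(q))$.
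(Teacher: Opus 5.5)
Your proposal is correct and matches the paper's proof step for step. It uses induction on $a$ with the base case from Propositions~\ref{prop:n2-q3mod4} and~\ref{pro:SL2-one-mod-four}, then Lemma~\ref{lem:power-two-K}, then the extension from $K$ to $G$ via Proposition~\ref{pro:relative-equal-block-lifting}, and finally the odd index from Lemma~\ref{lem:power-two-index}; your remark that the induction hypothesis must cover every central subgroup (as required in Lemma~\ref{lem:power-two-M}) is also accurate.
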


\begin{proof}
We argue by induction on $a$. The case $a=1$, namely $\mathsf{CM}(2)$, follows from Proposition \ref{prop:n2-q3mod4} and Proposition~\ref{pro:SL2-one-mod-four}. For $a\geq2$, set $m=2^{a-1}$ and assume that $\mathsf{CM}(m)$ holds.

By Lemma~\ref{lem:power-two-K}, $K$ has a complete mapping $\alpha$ stabilizing some $P\in\mathrm{Syl}_2(K)$, while $L_0$ admits a complete mapping. Hence Proposition~\ref{pro:relative-equal-block-lifting} yields a complete mapping $\theta$ of $X=G$ such that $\theta|_K=\alpha$. By Lemma~\ref{lem:power-two-index}, $[G:K]$ is odd, so $P\in\mathrm{Syl}_2(G)$. Therefore $\theta(P)=\alpha(P)=P$. Thus $\mathsf{CM}(2m)$ holds, completing the induction.
\end{proof}

\subsection{The case $n=3$ and $q\equiv3\pmod4$}

We now turn to $\mathsf{CM}(3)$. We treat here the case $n=3$ and $q\equiv3\pmod4$. The case $n=3$ with $q\equiv1\pmod4$ will be covered in Subsection \ref{subsec:general-dimensions} as part of the general argument. Thus $V=\F_q^3$ and $G=\SL_3(q)/Z$, where $Z\leq Z(\SL_3(q))$. Fix a decomposition
\[
V=E\oplus W,\quad \dim E=1,\quad \dim W=2.
\]
Then we have two flags
\[
\mathcal F:0<V, \quad \mathcal F':0<E<V.
\]
Indeed, $\mathcal F'$ is obtained from $\mathcal F$ by inserting the one-dimensional subspace $E$. Clearly, $G_{\mathcal F}=G$ and $G_{\mathcal F'}=G_E$.

Denote $Q:=G_{\mathcal F'}=G_E$. We will show that $[G:Q]$ is odd and construct a complete mapping of $Q$ stabilizing a Sylow $2$-subgroup. Once this is done, Proposition \ref{prop:one-step-flag-lifting} applies to the pair of flags $\mathcal F$ and $\mathcal F'$, and shows that any complete mapping of $Q$ stabilizing a Sylow $2$-subgroup extends to a complete mapping of $G$ stabilizing the same Sylow $2$-subgroup.

Since $Q=G_{\mathcal F'}$, its preimage $\widehat Q:=\pi^{-1}(Q)$ under the natural quotient map $\pi:\SL(V)\to G$ is precisely the stabilizer of $\mathcal F'$ in $\SL(V)$. By Lemma~\ref{lem:flag-Levi}, $\widehat Q$ admits a decomposition
\begin{equation}\label{eq:widehat Q}
\widehat Q=\widehat U_Q\rtimes\widehat L_Q,
\end{equation}
where
\begin{equation}\label{eq: widehat U_Q widehat L_Q}
\widehat U_Q=
\left\{
\begin{pmatrix}
1&u\\0&I_2
\end{pmatrix}:u\in\mathrm{Hom}(W,E)
\right\},\quad
\widehat L_Q=
\left\{
\begin{pmatrix}
(\det A)^{-1}&0\\0&A
\end{pmatrix}:A\in\GL(W)
\right\}.
\end{equation}
Set $U_Q:=\pi(\widehat U_Q)$, $L_Q:=\pi(\widehat L_Q)$. Since $\widehat U_Q\trianglelefteq \widehat Q$, $U_Q\trianglelefteq Q$. Moreover, the decomposition \eqref{eq:widehat Q} gives
\[
Q=U_Q\rtimes L_Q,\quad Q/U_Q\cong L_Q .
\]
The following lemma describes $U_Q$ and $L_Q$ explicitly in the present case.

\begin{lem}\label{lem:n3-parabolic}
With the notation above, $[G:Q]=q^2+q+1$, which is odd. Moreover, $|U_Q|=q^2$ and $L_Q\cong\GL_2(q)/C$, where $C:=\{\lambda I_2:\lambda I_3\in Z\}$ is a central subgroup of $\GL_2(q)$ of odd order.
\end{lem}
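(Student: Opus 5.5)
The plan has three parts, each a short computation: the index $[G:Q]$, the order of $U_Q$, and the identification of $L_Q$.

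\emph{Index.} Every element of $Z$ is scalar, so $\widehat Q=\pi^{-1}(Q)$ contains $Z$. Hence $[G:Q]=[\SL(V):\widehat Q]$. By the argument of Lemma~\ref{lem:adjacent-quotient-transitivity} (with the trivial flag and $a=1$, $b=2$), $\SL(V)$ acts transitively on the one-dimensional subspaces of $V$, and $\widehat Q$ is the stabilizer of $E$. So $[G:Q]=\binom{3}{1}_q=q^2+q+1$. This number is odd, since $q^2+q=q(q+1)$ is even.

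\emph{Order of $U_Q$.} The group $\widehat U_Q$ is parametrized by $\mathrm{Hom}(W,E)\cong\F_q^2$, so $|\widehat U_Q|=q^2$. Its elements are unipotent, and a scalar matrix $\lambda I_3$ is unipotent only when $\lambda=1$, so $\widehat U_Q\cap Z=1$. Therefore $\pi$ is injective on $\widehat U_Q$ and $|U_Q|=q^2$.

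\emph{Identification of $L_Q$.} By \eqref{eq: widehat U_Q widehat L_Q}, the map $A\mapsto\diag((\det A)^{-1},A)$ is an isomorphism $\GL_2(q)\to\widehat L_Q$. I claim $Z\leq\widehat L_Q$. For $\lambda I_3\in Z$, take $A=\lambda I_2$; then $(\det A)^{-1}=\lambda^{-2}=\lambda$, since $\lambda^3=1$. So $\lambda I_3$ corresponds to $\lambda I_2$, and $Z$ corresponds to $C=\{\lambda I_2:\lambda I_3\in Z\}$. This gives
\[
L_Q=\pi(\widehat L_Q)\cong\widehat L_Q/(\widehat L_Q\cap Z)=\widehat L_Q/Z\cong\GL_2(q)/C.
\]
Finally, $|C|=|Z|$ divides $\gcd(3,q-1)$, so $|C|$ is $1$ or $3$ and in particular odd. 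The subgroup $C$ is central in $\GL_2(q)$ because it consists of scalar matrices.

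There is no real obstacle. The only step needing care is checking that $Z\leq\widehat L_Q$, and this comes down to the identity $\lambda^{-2}=\lambda$ for cube roots of unity.
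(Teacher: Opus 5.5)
Your proposal is correct and follows essentially the same route as the paper: orbit--stabilizer on one-dimensional subspaces for the index, $\widehat U_Q\cap Z=1$ for $|U_Q|=q^2$, and the identity $\lambda^{-2}=\lambda$ for $\lambda^3=1$ to identify the kernel of $A\mapsto\pi(\diag((\det A)^{-1},A))$ with $C$. The only cosmetic differences are that you compute the index in $\SL(V)$ rather than in $G$, and you phrase the kernel computation as $Z\le\widehat L_Q$ corresponding to $C$ under the isomorphism $\GL_2(q)\cong\widehat L_Q$.
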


\begin{proof}
Clearly, $G$ acts transitively on the $1$-dimensional subspaces of $V$, with stabilizer $Q=G_E$. Hence, by the orbit-stabilizer theorem, $[G:Q]=\binom{3}{1}_q=q^2+q+1$, which is odd.

By \eqref{eq: widehat U_Q widehat L_Q}, we have $|\widehat U_Q|=|\operatorname{Hom}(W,E)|=q^2$. Moreover, every element of $Z$ is scalar, whereas the only scalar matrix in $\widehat U_Q$ is the identity. Hence $\widehat U_Q\cap Z=1$, and therefore $|U_Q|=q^2$.

We next determine $L_Q$. Define
\begin{equation}\label{eq: psi}
\psi:\GL(W)\longrightarrow L_Q,\quad \psi(A) :=\pi\left(\begin{pmatrix}
(\det A)^{-1}&0\\0&A
\end{pmatrix}\right).
\end{equation}
By the definition of $\widehat L_Q$, the map $\psi$ is surjective. We claim that $\ker\psi=C$. Indeed, if $A\in\ker\psi$, then $\operatorname{diag}((\det A)^{-1},A)\in Z$. Thus, for some $\lambda\in\F_q^\times$ with $\lambda I_3\in Z$, $A=\lambda I_2$. Hence $A\in C$. Conversely, if $A=\lambda I_2\in C$, then $\lambda I_3\in Z$, and hence $\lambda^3=1$. Therefore $(\det A)^{-1}=\lambda^{-2}=\lambda$, so $\operatorname{diag}((\det A)^{-1},A)=\lambda I_3\in Z$. Thus $A\in\ker\psi$, proving $\ker\psi=C$. Consequently,
\[
L_Q\cong \GL(W)/C\cong \GL_2(q)/C.
\]

Finally, $C\cong\{\lambda\in\F_q^\times:\lambda I_3\in Z\}\leq \{\lambda\in\F_q^\times:\lambda^3=1\}$. Hence $|C|\mid 3$, so $C$ has odd order.
\end{proof}

We next construct a complete mapping of the factor $L_Q$ stabilizing a Sylow $2$-subgroup.

\begin{lem}\label{lem:LQ-complete-mapping}
The group $L_Q$ admits a complete mapping stabilizing a Sylow $2$-subgroup.
\end{lem}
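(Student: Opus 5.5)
We need a complete mapping of $L_Q\cong\GL_2(q)/C$, with $|C|\in\{1,3\}$ and $q\equiv3\pmod4$, that stabilizes a Sylow $2$-subgroup. The plan is to use a normal series and Lemma~\ref{lem:normal-extension}. Inside $\GL_2(q)$, the normal subgroup $\SL_2(q)$ meets $C$ only in $\lambda I_2$ with $\lambda^2=1$ and $\lambda^3=1$, so $\SL_2(q)\cap C=1$. Hence $\SL_2(q)$ embeds in $L_Q$ as a normal subgroup $N_1:=\psi(\SL(W))$, with $N_1\cong\SL_2(q)$. The quotient is
\[
L_Q/N_1\cong\GL_2(q)/(\SL_2(q)C)\cong\F_q^\times/\det(C).
\]
This is cyclic of order $(q-1)/|\det C|$. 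Since $q\equiv3\pmod4$, its Sylow $2$-subgroup has order $2$ and is cyclic, so it has no complete mapping. Thus Lemma~\ref{lem:normal-extension} cannot be applied directly to $N_1\trianglelefteq L_Q$, and the argument must be changed.

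Instead I would work with a subgroup of odd index in $L_Q$ and an exact factorization, in the spirit of Lemma~\ref{lem:rank-one-normalizer} and Proposition~\ref{prop:n2-q3mod4}.

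\textbf{Step 1: a subgroup of odd index.} Take the nonsplit torus $\widetilde T^{-}\cong C_{q^2-1}$ of $\GL_2(q)$ and its normalizer $\widetilde N$, which satisfies $\widetilde N=\widetilde T^{-}\rtimes\langle\sigma\rangle$ with $\sigma$ the Frobenius involution. Then $|\widetilde N|=2(q^2-1)$ and
\[
[\GL_2(q):\widetilde N]=\frac{q(q-1)}{2},
\]
which is odd because $q\equiv3\pmod4$. Since $C\leq\widetilde T^{-}$ (scalars lie in the torus), the image $N_L:=\psi(\widetilde N)$ has odd index in $L_Q$. Therefore a Sylow $2$-subgroup $P$ of $N_L$ is a Sylow $2$-subgroup of $L_Q$. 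This $P$ is noncyclic: it is semidihedral of order $2^{\nu_2(q^2-1)+1}$, modulo the odd-order group $C$, which does not change the $2$-part.

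\textbf{Step 2: a complete mapping of $N_L$ stabilizing $P$.} The cyclic group $\psi(\widetilde T^{-})$ has a Hall $2'$-subgroup $R$, which is odd, normal in $N_L$, and satisfies $N_L=RP$ with $R\cap P=1$. Theorem~\ref{thm:HP} gives a complete mapping of $P$, and $R$ has the identity as a complete mapping. Lemma~\ref{lem:exact-factorization} then produces a complete mapping of $N_L$ stabilizing $P$.

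\textbf{Step 3: extension to $L_Q$.} I would look for a complementary subgroup $H$ of order $[L_Q:N_L]=q(q-1)/2$ with $H\cap N_L=1$. The natural candidate is the image of $\widehat U\rtimes T_0$, where $\widehat U$ is the unipotent upper triangular subgroup (order $q$) and $T_0$ is a subgroup of order $(q-1)/2$ of the diagonal torus chosen so that its image in $L_Q$ still has that order. With such an $H$, Lemma~\ref{lem:exact-factorization} extends the mapping of Step~2 from $N_L$ to $L_Q$. Because $|H|$ is odd, $H$ has a complete mapping.

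The main obstacle is Step~3: getting $|H\cap N_L|=1$ and the exact order count, especially when $|C|=3$, where $C$ lies in the diagonal torus. I would take $T_0$ to be the odd-order part of $\{\diag(a,1)\}$, of order $(q-1)/2$. This group meets $C$ trivially, since nonidentity elements of $C$ are non-identity scalars and $\diag(a,1)$ is scalar only when $a=1$. It meets $\widetilde N$ trivially as well. To see this, suppose an element $\diag(a,1)$ with $a\neq1$ lay in $\widetilde N$. It has two distinct rational eigenvalues $a$ and $1$. Every element of $\widetilde T^{-}$ either is scalar or has eigenvalues conjugate in $\F_{q^2}\setminus\F_q$, so $\diag(a,1)\notin\widetilde T^{-}$. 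Every element of $\widetilde N\setminus\widetilde T^{-}$ squares into $\F_q^\times$-scalars and has eigenvalues $\pm\mu$, so it has trace $0$. The trace of $\diag(a,1)$ is $a+1$, and $a+1=0$ would force $a=-1$, which has even order and so is excluded from $T_0$. Finally, $\widehat U$ has order $q$ coprime to $|N_L|$, and a coprimality argument finishes the verification that the intersection is trivial.
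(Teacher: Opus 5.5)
Your proposal is correct and is essentially the paper's proof. The paper uses the same $\widetilde N=\widetilde T^{-}\rtimes\langle\widetilde s\rangle$ (its $\widetilde s=\diag(1,-1)$ is your Frobenius involution) and the same exact factorization $N=RP$. Your complement is exactly the paper's, since $\widehat U T_0=\widetilde H=\bigl\{\begin{pmatrix} d&x\\0&1\end{pmatrix}: d\in(\F_q^\times)^2,\ x\in\F_q\bigr\}$. The paper checks $\widetilde H\cap\widetilde N=1$ by comparing matrix entries rather than eigenvalues, and then uses $C\leq\widetilde N$ to get $H\cap N=1$ in $L_Q$.

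Two small tightenings are needed.

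\textbf{The intersection.} Knowing that $\widehat U$ and $T_0$ each meet $\widetilde N$ trivially does not by itself give $\widehat U T_0\cap\widetilde N=1$. However, your eigenvalue/trace test applies verbatim to every element $\begin{pmatrix} d&x\\0&1\end{pmatrix}$ with $d\neq1$: its eigenvalues $d,1$ are distinct and rational, and its trace $d+1$ is nonzero because $-1$ is a nonsquare. The elements with $d=1$ have $p$-power order, which is coprime to $|\widetilde N|$. Together these close the gap.

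\textbf{Passing to $L_Q$.} You should state explicitly how $\widetilde H\cap\widetilde N=1$ gives $H\cap N_L=1$. If $\psi(h)=\psi(n)$ with $h\in\widetilde H$ and $n\in\widetilde N$, then $hn^{-1}\in C\leq\widetilde N$. Hence $h\in\widetilde H\cap\widetilde N=1$.
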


\begin{proof}
Let $\psi:\GL_2(q)\to L_Q$ be the surjective homomorphism defined in \eqref{eq: psi}. By Lemma \ref{lem:n3-parabolic}, its kernel is the odd-order subgroup $C$.

We first construct a subgroup $N\leq L_Q$ together with $P\in\mathrm{Syl}_2(N)$ such that $N$ admits a complete mapping stabilizing $P$. We will later show that $[L_Q:N]$ is odd, and hence $P\in\mathrm{Syl}_2(L_Q)$. We then find an odd-order subgroup $H\leq L_Q$ such that $L_Q=HN$ is an exact factorization. Lemma~\ref{lem:exact-factorization} then allows us to extend the complete mapping from $N$ to $L_Q$ while preserving its restriction to $N$.

Let $\widetilde T^{-}$ be the nonsplit maximal torus of $\GL_2(q)$ defined in Definition \ref{def:nonsplit-torus}. Then $\widetilde T^{-}\cong\F_{q^2}^{\times}\cong C_{q^2-1}$. Set
\[
\widetilde s=
\begin{pmatrix}
1&0\\0&-1
\end{pmatrix}.
\]
For
\[
t=
\begin{pmatrix}
a&b\\ \varepsilon b&a
\end{pmatrix}\in\widetilde T^{-},
\]
we have
\[
\widetilde s t\widetilde s^{-1}=
\begin{pmatrix}
a&-b\\-\varepsilon b&a
\end{pmatrix}
\in\widetilde T^{-}.
\]
Thus $\widetilde s$ normalizes $\widetilde T^{-}$. Since $\widetilde s\notin\widetilde T^{-}$ and $\widetilde s^2=1$, we may set
\[
\widetilde N:=\widetilde T^{-}\rtimes\langle\widetilde s\rangle=\widetilde T^{-}\mathbin{\dot\cup}\widetilde s\widetilde T^{-},
\]
and hence $|\widetilde N|=2(q^2-1)$. Let $\widetilde T^{-}_2$ be the Sylow $2$-subgroup of $\widetilde T^{-}$, and let $\widetilde R$ be the unique largest odd-order subgroup of $\widetilde T^{-}$. Since $\widetilde T^{-}$ is cyclic, $\widetilde T^{-}=\widetilde R\times\widetilde T^{-}_2$. Both $\widetilde R$ and $\widetilde T^{-}_2$ are characteristic in $\widetilde T^{-}$ and hence are normalized by $\widetilde s$. Therefore $\widetilde P:=\widetilde T^{-}_2\rtimes\langle\widetilde s\rangle\in\mathrm{Syl}_2(\widetilde N)$, and
\[
\widetilde N=\widetilde R\widetilde P,\quad\widetilde R\cap\widetilde P=1.
\]
Moreover, $\widetilde P$ is noncyclic, since $\widetilde s$ is an involution not contained in the cyclic group $\widetilde T^{-}_2$.

Since every element of $C$ is scalar, $C\leq\widetilde T^{-}$. Since $|C|$ is odd and $\widetilde R$ is the unique largest odd-order subgroup of $\widetilde T^{-}$, it follows that $C\leq\widetilde R$. Set
\[
N:=\psi(\widetilde N),\quad R:=\psi(\widetilde R),\quad P:=\psi(\widetilde P).
\]
Since $C$ has odd order whereas $\widetilde P$ is a $2$-group, $C\cap\widetilde P=1$. Hence $P=\psi(\widetilde P)\cong\widetilde P/(\widetilde P\cap C)=\widetilde P$ is a noncyclic $2$-group. Moreover, $|R|$ is odd, and so $R\cap P=1$. Finally, since $\widetilde N=\widetilde R\widetilde P$ and $\psi$ is a homomorphism,
\[
N=\psi(\widetilde N)=\psi(\widetilde R)\psi(\widetilde P)=RP.
\]
Thus $N=RP$ is an exact factorization. Since $R$ has odd order and $P$ is a noncyclic $2$-group, we have $P\in\mathrm{Syl}_2(N)$ and both $R$ and $P$ admit complete mappings. Therefore, by Lemma~\ref{lem:exact-factorization}, any prescribed complete mapping of $P$ extends to a complete mapping $\alpha:N\longrightarrow N$ satisfying $\alpha(P)=P$.

It remains to extend $\alpha$ from $N$ to $L_Q$. For this purpose, let $D:=(\F_q^\times)^2$ be the subgroup of squares in $\F_q^\times$, and define
\[
\widetilde H:=
\left\{
\begin{pmatrix}
d&x\\0&1
\end{pmatrix}
:d\in D,\ x\in\F_q\right\}\leq \GL_2(q).
\]
Since $q\equiv3\pmod4$, $|\widetilde H|=\frac{q(q-1)}{2}$ is odd.

We claim that $\widetilde H\cap\widetilde N=1$. Since $\widetilde N=\widetilde T^{-}\mathbin{\dot\cup}\widetilde s\widetilde T^{-}$, it suffices to consider the two parts separately. If
\[
\begin{pmatrix}
d&x\\0&1
\end{pmatrix}=
\begin{pmatrix}
a&b\\\varepsilon b&a
\end{pmatrix},
\]
then $b=x=0$, $a=d=1$. Hence $\widetilde H\cap\widetilde T^{-}=1$. On the other hand, every element of $\widetilde s\widetilde T^{-}$ has the form
\[
\begin{pmatrix}
a&b\\
-\varepsilon b&-a
\end{pmatrix}.
\]
If such a matrix belonged to $\widetilde H$, then $b=0$ and $a=-1$. But $-1$ is not a square in $\F_q^\times$ because $q\equiv3\pmod4$. Thus $\widetilde H\cap\widetilde s\widetilde T^{-}=\varnothing$. Therefore $\widetilde H\cap\widetilde N=\widetilde H\cap\widetilde T^{-}=1$.

Finally, $|\widetilde H|\,|\widetilde N|=\frac{q(q-1)}2\cdot2(q^2-1)=|\GL_2(q)|$. Hence $\GL_2(q)=\widetilde H\widetilde N$ is an exact factorization. Set $H:=\psi(\widetilde H)$. Since $\psi$ is surjective,
\[
L_Q=\psi(\GL_2(q))=\psi(\widetilde H\widetilde N)=\psi(\widetilde H)\psi(\widetilde N)=HN.
\]
Moreover, if $x\in H\cap N$, then $x=\psi(h)=\psi(n)$ for some $h\in\widetilde H$ and $n\in\widetilde N$. Hence $hn^{-1}\in\ker\psi=C\leq\widetilde N$, so $h\in\widetilde N$. Since $h\in\widetilde H$ and $\widetilde H\cap\widetilde N=1$, $h=1$, and hence $x=1$. Therefore $H\cap N=1$, and $L_Q=HN$ is an exact factorization.

Since $\widetilde H$ has odd order, so does its homomorphic image $H$. Hence $H$ admits a complete mapping. By Lemma \ref{lem:exact-factorization}, the complete mapping $\alpha$ of $N$ extends to a complete mapping $\theta$ of $L_Q$ with $\theta|_N=\alpha$. Moreover, since $L_Q=HN$ is an exact factorization, $[L_Q:N]=|H|$, which is odd. Since $P\in\mathrm{Syl}_2(N)$, it follows that $P\in\mathrm{Syl}_2(L_Q)$. Finally, $\theta(P)=\alpha(P)=P$. Thus $L_Q$ admits a complete mapping stabilizing a Sylow $2$-subgroup.
\end{proof}

\begin{pro}\label{prop:CM3-q3}
Assume that $q\equiv3\pmod4$. Then $\mathsf{CM}(3)$ holds.
\end{pro}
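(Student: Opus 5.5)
The plan is to first build a complete mapping of $Q=G_E$ that stabilizes a Sylow $2$-subgroup, and then lift it to $G$ along the one-step flag refinement $\mathcal F:0<V$ to $\mathcal F':0<E<V$.

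\textbf{Step 1: the normal extension $Q=U_Q\rtimes L_Q$.} Fix $P\in\mathrm{Syl}_2(Q)$. Since $|U_Q|=q^2$ is odd by Lemma~\ref{lem:n3-parabolic}, we have $P\cap U_Q=1$, and the identity map is a complete mapping of $U_Q$ that trivially stabilizes $P\cap U_Q$. The image $PU_Q/U_Q$ is a Sylow $2$-subgroup of $Q/U_Q\cong L_Q$. Lemma~\ref{lem:LQ-complete-mapping} gives a complete mapping of $L_Q$ stabilizing some Sylow $2$-subgroup. By conjugacy of Sylow subgroups (conjugating the map), we may assume it stabilizes $PU_Q/U_Q$. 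Lemma~\ref{lem:normal-extension}, applied with $N=U_Q\trianglelefteq Q$, then yields a complete mapping $\alpha$ of $Q$ with $\alpha(P)=P$.

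An alternative is to avoid the normal-extension lemma: $Q=U_Q\rtimes L_Q$ is an exact factorization with $U_Q$ of odd order, so Lemma~\ref{lem:exact-factorization} extends a complete mapping of $L_Q$ to one of $Q$, preserving a Sylow $2$-subgroup $P\leq L_Q$. This works because $[Q:L_Q]=q^2$ is odd, so $P\in\mathrm{Syl}_2(Q)$. This route is even more direct, and I would likely use it.

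\textbf{Step 2: lifting to $G$.} Apply Proposition~\ref{prop:one-step-flag-lifting} with $U=0$, $W=V$ and inserted subspace $Y=E$. This gives $X=G_{\mathcal F}=G$, $H=G_{\mathcal F'}=Q$, $a=1$ and $b=2$. The index $\binom{3}{1}_q=q^2+q+1$ is odd, as recorded in Lemma~\ref{lem:n3-parabolic}. Hence $P\in\mathrm{Syl}_2(G)$, and the extension $\theta$ of $\alpha$ satisfies $\theta(P)=P$.

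Since this holds for every $Z\leq Z(\SL_3(q))$, $\mathsf{CM}(3)$ follows.

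\textbf{Main obstacle.} There is essentially no obstacle left, since the real work was done in Lemma~\ref{lem:LQ-complete-mapping}. The only point needing care is the Sylow bookkeeping: checking that the Sylow subgroup stabilized in $L_Q$ remains Sylow in $Q$ and then in $G$. This follows from the odd indices $q^2$ and $q^2+q+1$.
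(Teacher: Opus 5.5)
Your proposal is correct and follows the paper's proof: the paper also uses Lemma~\ref{lem:exact-factorization} on the exact factorization $Q=U_QL_Q$ (with $|U_Q|=q^2$ odd, so $P$ stays Sylow in $Q$) to extend the complete mapping of $L_Q$ from Lemma~\ref{lem:LQ-complete-mapping}, and then lifts to $G$ by Proposition~\ref{prop:one-step-flag-lifting} for the refinement $0<E<V$ of $0<V$, whose index $q^2+q+1$ is odd. Your first variant of Step~1, which uses Lemma~\ref{lem:normal-extension} with the identity on $U_Q$ and a conjugated complete mapping of $Q/U_Q\cong L_Q$, is also valid, just slightly less direct.
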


\begin{proof}
By Lemma~\ref{lem:LQ-complete-mapping}, there exist $P\in\mathrm{Syl}_2(L_Q)$ and a complete mapping $\alpha$ of $L_Q$ stabilizing $P$. Since $Q=U_Q\rtimes L_Q$, the product $Q=U_QL_Q$ is an exact factorization. Moreover, by Lemma~\ref{lem:n3-parabolic}, $|U_Q|=q^2$, which is odd. Hence $U_Q$ admits a complete mapping. Therefore Lemma~\ref{lem:exact-factorization} extends $\alpha$ to a complete mapping $\theta_Q$ of $Q$. Since $\theta_Q|_{L_Q}=\alpha$, we have $\theta_Q(P)=P$. Also, $[Q:L_Q]=|U_Q|$ is odd, so $P\in\mathrm{Syl}_2(Q)$.

By Lemma~\ref{lem:n3-parabolic}, $[G:Q]=q^2+q+1$ is odd. Therefore, applying Proposition~\ref{prop:one-step-flag-lifting} to the refinement $\mathcal F':0<E<V$ of $\mathcal F:0<V$, the complete mapping $\theta_Q$ extends to a complete mapping $\theta$ of $G$ such that $\theta|_Q=\theta_Q$. Moreover, $P\in\mathrm{Syl}_2(G)$ and $\theta(P)=\theta_Q(P)=P$. Thus $\mathsf{CM}(3)$ holds.
\end{proof}

\subsection{The general case}\label{subsec:general-dimensions}

We now prove $\mathsf{CM}(n)$ for the remaining values of $n$. Write the binary expansion of $n$ as
\[
n=d_1+\cdots+d_r,\quad d_1>\cdots>d_r,
\]
where each $d_i$ is a power of $2$. The case $r=1$ was proved in Proposition \ref{pro:power-two}. We therefore assume throughout this subsection that $r\geq2$.

We first establish two auxiliary results for block-diagonal subgroups associated with decompositions of $V$ into subspaces of $2$-power dimensions.

\begin{lem}\label{lem:block-SL}
Let $V=E_1\oplus\cdots\oplus E_t$, where each $e_i:=\dim E_i$ is a power of $2$. Set
\[
\widehat M:=\left\{\mathrm{diag}(A_1,\ldots,A_t):A_i\in\SL(E_i)\right\},\quad M:=\pi(\widehat M).
\]
Then $M$ admits a complete mapping stabilizing any prescribed Sylow $2$-subgroup.
\end{lem}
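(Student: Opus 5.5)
We prove this by induction on $t$, generalizing the argument of Lemma~\ref{lem:power-two-M}. This lemma is presumably used inside the proof of $\mathsf{CM}(n)$ for general $n$, so we may assume that $\mathsf{CM}(e)$ holds for every power of $2$ (Proposition~\ref{pro:power-two}); note that $e_i\geq 2$ is not guaranteed, so the case $e_i=1$ must be allowed.

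For $t=1$ we have $\widehat M=\SL(V)$ and $M=\SL(V)/Z$. If $e_1\geq 2$, then $\mathsf{CM}(e_1)$ gives a complete mapping stabilizing some Sylow $2$-subgroup. Since the Sylow $2$-subgroups are conjugate, conjugating this complete mapping gives one stabilizing any prescribed Sylow $2$-subgroup: indeed, if $\theta$ is a complete mapping, then so is $x\mapsto g\theta(g^{-1}xg)g^{-1}$. If $e_1=1$, then $M$ is trivial and there is nothing to prove.

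For $t\geq2$, let
\[
\widehat M_1:=\{\diag(A_1,I,\ldots,I):A_1\in\SL(E_1)\}, \qquad M_1:=\pi(\widehat M_1).
\]
As in Lemma~\ref{lem:power-two-M}, $\widehat M_1\trianglelefteq\widehat M$ and $\widehat M_1\cap Z=1$, so $M_1\cong\SL(E_1)$ is normal in $M$. For the quotient, consider $V'=E_2\oplus\cdots\oplus E_t$ and the map
\[
\varphi:M\to \widehat M'/Z', \qquad \pi(\diag(A_1,\ldots,A_t))\mapsto \diag(A_2,\ldots,A_t)Z',
\]
where $\widehat M'$ is the analogous block group for $V'$, and $Z'=\{\lambda I_{V'}:\lambda I_V\in Z,\ \lambda^{e_i}=1\text{ for all } i\}$. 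I need to check that $\varphi$ is well defined and surjective. The kernel consists of the images of matrices whose tail is a scalar $\lambda I_{V'}$ with $\lambda I_V\in Z$ and $\lambda^{e_i}=1$; multiplying by $\lambda^{-1}I_V$ puts such an element into $M_1$, so $\ker\varphi=M_1$. The group $Z'$ is central and consists of scalars lying in $\widehat M'\cap Z(\SL(V'))$.

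Since $\widehat M'/Z'$ is $\pi'(\widehat M')$ for the quotient map $\pi'$ of $\SL(V')$ by $Z'\leq Z(\SL(V'))$, the inductive hypothesis applied to $V'$ with $t-1$ summands gives it a complete mapping stabilizing any prescribed Sylow $2$-subgroup. The main point needing care is that the induction must be stated for arbitrary central $Z\leq Z(\SL(V))$, so that the quotient is again of the same form. I expect this to be the only real obstacle, and it is resolved by noting that $Z'\leq Z(\SL(V'))$, since $\lambda^{\dim V'}=\prod_{i\geq2}\lambda^{e_i}=1$.

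Finally, given a prescribed $P\in\mathrm{Syl}_2(M)$, we have $P\cap M_1\in\mathrm{Syl}_2(M_1)$ and $PM_1/M_1\in\mathrm{Syl}_2(M/M_1)$. The base case gives a complete mapping of $M_1$ stabilizing $P\cap M_1$, and the inductive hypothesis, transported through $\varphi$, gives one of $M/M_1$ stabilizing $PM_1/M_1$. Lemma~\ref{lem:normal-extension} then yields a complete mapping of $M$ stabilizing $P$. When some $e_i=1$, the corresponding factor is trivial and the step is vacuous.
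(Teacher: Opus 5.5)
Your argument is correct and is essentially the paper's. The paper uses the normal series $1=M_0\trianglelefteq M_1\trianglelefteq\cdots\trianglelefteq M_t=M$ of images of the first $j$ factors, observes that each $M_j/M_{j-1}$ is a central quotient of $\SL(E_j)$, and applies Lemma~\ref{lem:normal-extension} successively; your induction on $t$, which splits off $M_1$ and identifies $M/M_1$ with the block group of $V'$ modulo $Z'\leq Z(\SL(V'))$, unwinds to the same series, at the cost of carrying the statement for arbitrary central $Z$. The one step you flagged but did not write out, well-definedness of $\varphi$, is immediate: two lifts of the same element differ by some $\lambda I_V\in Z$, and comparing block determinants forces $\lambda^{e_i}=1$ for every $i$, so $\lambda I_{V'}\in Z'$.
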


\begin{proof}
Set $\widehat M_0:=1$, $M_0:=1$, and, for $1\leq j\leq t$, let
\[
\widehat M_j:=\left\{\mathrm{diag}(A_1,\ldots,A_j,I_{e_{j+1}},\ldots,I_{e_t}): A_i\in\SL(E_i)\right\},\quad M_j:=\pi(\widehat M_j).
\]
For each $1\leq j\leq t$, the homomorphism $\widehat M_j\longrightarrow\SL(E_j)$, $\mathrm{diag}(A_1,\ldots,A_j,I,\ldots,I)\mapsto A_j$ has kernel $\widehat M_{j-1}$. Hence $\widehat M_{j-1}\trianglelefteq\widehat M_j$, and therefore $M_{j-1}=\pi(\widehat M_{j-1})\trianglelefteq\pi(\widehat M_j)=M_j$. Thus
\[
1=M_0\trianglelefteq M_1\trianglelefteq\cdots\trianglelefteq M_t=M.
\]
Moreover, under the identification $\widehat M\cong \SL(E_1)\times\cdots\times\SL(E_t)$, the subgroup $\widehat M_j$ is the product of the first $j$ direct factors of $\widehat M$, so $\widehat M_j\trianglelefteq\widehat M$. Hence $M_j\trianglelefteq M$, $0\leq j\leq t$.

For each $1\leq j\leq t$, define
\[
\varphi_j:\SL(E_j)\longrightarrow M_j/M_{j-1},\quad \varphi_j(A)=\pi\!\left(\mathrm{diag}(I_{e_1},\ldots,I_{e_{j-1}},A,I_{e_{j+1}},\ldots,I_{e_t})\right)M_{j-1}.
\]
Then $\varphi_j$ is surjective. Moreover, if $A\in\ker\varphi_j$, then $A=\lambda I_{e_j}$ for some $\lambda\in\mathbb F_q^\times$. Hence $\ker\varphi_j\leq Z(\SL(E_j))$. Therefore $M_j/M_{j-1}\cong\SL(E_j)/\ker\varphi_j$, so $M_j/M_{j-1}$ is a central quotient of $\SL(E_j)$. For each $j$, if $e_j=1$, then $\SL(E_j)=1$, so the required assertion is immediate. If $e_j\geq2$, then $e_j=2^{a_j}$ for some $a_j\geq1$, and Proposition~\ref{pro:power-two} applies.

Now fix $P\in\mathrm{Syl}_2(M)$ and let $P_j:=P\cap M_j$, $0\leq j\leq t$. Since $M_j\trianglelefteq M$, we have $P_j\in\mathrm{Syl}_2(M_j)$. Consequently, $P_jM_{j-1}/M_{j-1}\in\mathrm{Syl}_2(M_j/M_{j-1})$. By the conjugacy of Sylow $2$-subgroups, we may choose a complete mapping of $M_j/M_{j-1}$ stabilizing $P_jM_{j-1}/M_{j-1}$.

Starting with $M_0=1$, apply Lemma~\ref{lem:normal-extension} successively to $M_{j-1}\trianglelefteq M_j$, $1\leq j\leq t$. Since $P_j\cap M_{j-1}=P_{j-1}$, at the $j$-th step we obtain a complete mapping of $M_j$ stabilizing $P_j$. In particular, $M=M_t$ admits a complete mapping stabilizing $P_t=P$.
\end{proof}

\begin{lem}\label{lem:block-Levi}
Let $V=E_1\oplus\cdots\oplus E_t$, $t\geq3$, where each $e_i:=\dim E_i$ is a power of $2$. Assume that $2^{\nu_2(n)}\mid e_i$, $1\leq i\leq t$, where $n=\dim V$. Set
\[
\widehat L:=\left\{\mathrm{diag}(A_1,\ldots,A_t):A_i\in\GL(E_i),\ \prod_{i=1}^t\det A_i=1\right\},\quad L:=\pi(\widehat L).
\]
Then $L$ admits a complete mapping stabilizing any prescribed Sylow $2$-subgroup.
\end{lem}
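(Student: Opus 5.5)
The plan is to mirror the proof of Lemma~\ref{lem:power-two-K}: use the normal subgroup $M\trianglelefteq L$ from Lemma~\ref{lem:block-SL}, identify $L/M$, and combine the two via Lemma~\ref{lem:normal-extension}. Here $\widehat M$ is the subgroup of $\widehat L$ consisting of the $\mathrm{diag}(A_1,\ldots,A_t)$ with every $A_i\in\SL(E_i)$.

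\textbf{Step 1: the quotient $L/M$.} Consider the homomorphism
\[
\delta:\widehat L\to(\F_q^\times)^{t},\qquad \mathrm{diag}(A_1,\ldots,A_t)\mapsto(\det A_1,\ldots,\det A_t).
\]
Its kernel is $\widehat M$, and its image is the subgroup $\Delta=\{(c_1,\ldots,c_t):\prod c_i=1\}\cong C_{q-1}^{t-1}$. Exactly as in Lemma~\ref{lem:power-two-K}, $\delta$ induces a homomorphism $\overline\delta:L\to\Delta/\delta(Z)$ with kernel $M$. Hence $M\trianglelefteq L$ and $L/M\cong\Delta/E$, where
\[
E=\delta(Z)=\{(\lambda^{e_1},\ldots,\lambda^{e_t}):\lambda I_n\in Z\}.
\]
So $L/M$ is abelian.

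\textbf{Step 2: the Sylow $2$-subgroup of $L/M$ is noncyclic.} The group $\Delta$ has $2$-rank $t-1\geq2$. Quotienting by the cyclic group $E$ lowers the $2$-rank by at most one, so this alone is not enough when $t=3$. Here the hypothesis $2^{\nu_2(n)}\mid e_i$ is used. If $\lambda I_n\in Z$ then $\lambda^n=1$. Every $e_i$ is a multiple of $2^{\nu_2(n)}$, so $\lambda^{e_i}$ has order dividing $\gcd(n,\cdot)$, and the image of $E$ in the $2$-part of $\Delta$ lies in the "diagonal-type" subgroup $\{(\mu^{e_1/2^{\nu}},\ldots)\}$.

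To make this concrete, I would exhibit two independent involutions in $\Delta/E$: the images of $(-1,-1,1,\ldots,1)$ and $(1,-1,-1,1,\ldots,1)$, both of which lie in $\Delta$ since the product of entries is $1$. I would check that no nontrivial product of them lies in $E$ modulo squares, i.e. that $\langle x,y\rangle E/E\cong C_2^2$. Take an element of $E$ of the form $(\lambda^{e_1},\ldots,\lambda^{e_t})$ that equals $x$, $y$ or $xy$. Then at least one coordinate equals $1$ and another equals $-1$. The divisibility assumptions should then force a contradiction: e.g.\ $\lambda^{2^{\nu}}$ determines all coordinates, with $\lambda^{e_i}=(\lambda^{2^\nu})^{e_i/2^\nu}$, and $\lambda^{n}=1$.

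If an $E$-coincidence cannot be excluded directly, I would instead pass to the abelian $2$-rank argument: $\Delta/E$ has $2$-rank at least $t-2\geq1$. In the rank-one case I would show the Sylow subgroup is still noncyclic using the order of $E$'s $2$-part. This is the step I expect to be the main obstacle, and it is where $t\geq3$ and the $2^{\nu_2(n)}$ condition enter.

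\textbf{Step 3: assembling the complete mapping.} Once $L/M$ is abelian with noncyclic Sylow $2$-subgroup $\overline P$, write $L/M=R\times\overline P$ with $|R|$ odd. Lemma~\ref{lem:exact-factorization} (odd group times a noncyclic $2$-group, using Theorem~\ref{thm:HP}) then gives a complete mapping of $L/M$ stabilizing $\overline P$; indeed every Sylow $2$-subgroup equals $\overline P$ here. Now fix a prescribed $P\in\mathrm{Syl}_2(L)$. Lemma~\ref{lem:block-SL} gives a complete mapping of $M$ stabilizing $P\cap M\in\mathrm{Syl}_2(M)$. Lemma~\ref{lem:normal-extension} then yields a complete mapping of $L$ stabilizing $P$, which proves the lemma.
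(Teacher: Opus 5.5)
\textbf{Verdict: genuine gap.} Your architecture matches the paper's proof: $M\trianglelefteq L$ with $L/M\cong\Delta/E$, then an exact factorization of the abelian quotient into its odd part and a noncyclic Sylow $2$-subgroup, then Lemma~\ref{lem:normal-extension} combined with Lemma~\ref{lem:block-SL}. The problem is Step~2, the only step that uses $t\geq3$ and $2^{\nu_2(n)}\mid e_i$. You do not prove it: you say the divisibility ``should force a contradiction'' and then give a fallback.

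The missing observation is that $E=\delta(Z)$ has odd order. Put $\nu=\nu_2(n)$, and for $\lambda I_n\in Z$ set $\mu=\lambda^{2^\nu}$. Then $\mu^{n/2^\nu}=\lambda^n=1$ with $n/2^\nu$ odd, so $\mu$ has odd order. Hence each coordinate $\lambda^{e_i}=\mu^{e_i/2^\nu}$ has odd order, and every element of $E$ has odd order. The paper phrases this as: $|\lambda|$ divides $n$, so the $2$-part of $|\lambda|$ divides $2^{\nu}$ and hence every $e_i$.

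Consequently $E$ meets the Sylow $2$-subgroup of $\Delta$ trivially. The Sylow $2$-subgroup of $L/M\cong\Delta/E$ is then isomorphic to $C_{2^{c_0}}^{\,t-1}$, where $c_0=\nu_2(q-1)\geq1$ because $q$ is odd in this section. This group is noncyclic because $t\geq3$.

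This also completes your own test at once. The elements $x$, $y$, $xy$ are involutions, so none of them lies in the odd-order group $E$, and therefore $\langle x,y\rangle E/E\cong C_2^2$. The qualifier ``modulo squares'' is not needed; what you need is exactly $x,y,xy\notin E$.

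Discard the fallback. A finite abelian $2$-group of $2$-rank one is cyclic. So if the Sylow $2$-subgroup of $\Delta/E$ had $2$-rank one, no argument using ``the order of $E$'s $2$-part'' could show it is noncyclic. The only way through is to show the $2$-rank is at least $2$, and the odd order of $E$ gives exactly that. With Step~2 replaced by the argument above, your Steps~1 and~3 go through as written.
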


\begin{proof}
Let $\widehat M$ and $M$ be as in Lemma~\ref{lem:block-SL}. Since $\widehat M\trianglelefteq\widehat L$, $M=\pi(\widehat M)\trianglelefteq\pi(\widehat L)=L$. Set $D:=\left\{(a_1,\ldots,a_t)\in(\F_q^\times)^t:a_1\cdots a_t=1\right\}$. Clearly, $D$ is a group with identity $(1,\ldots,1)$. Define
\[
\delta:\widehat L\longrightarrow D,\quad\operatorname{diag}(A_1,\ldots,A_t)\mapsto(\det A_1,\ldots,\det A_t).
\]
Then $\delta$ is surjective and $\ker\delta=\widehat M$.
Since $Z\leq\widehat L$, it follows that
\[
L/M\cong\widehat L/\widehat M Z\cong D/\delta(Z).
\]

We claim that $|\delta(Z)|$ is odd. Let $\lambda I_n\in Z$, and write the order of $\lambda$ as $2^c u$, where $u$ is odd. Since $\lambda^n=1$, we have $|\lambda|\mid n$, and hence $2^c\mid 2^{\nu_2(n)}$. By hypothesis, $2^{\nu_2(n)}\mid e_i$ for every $i$, so $2^c\mid e_i$. Therefore $|\lambda^{e_i}|=\frac{|\lambda|}{\gcd(|\lambda|,e_i)}$ is odd for every $i$. Hence
\[
\delta(\lambda I_n)=\delta(\diag(\lambda I_{e_1},\cdots,\lambda I_{e_t}))=(\lambda^{e_1},\ldots,\lambda^{e_t})
\]
has odd order. Thus every element of $\delta(Z)$ has odd order, and consequently $|\delta(Z)|$ is odd.

Now $D\cong(\F_q^\times)^{t-1}$. Write $c_0:=\nu_2(q-1)$. Since $|\delta(Z)|$ is odd, every Sylow $2$-subgroup of $L/M\cong D/\delta(Z)$ is isomorphic to $C_{2^{c_0}}^{\,t-1}$. As $t\geq3$, such a Sylow $2$-subgroup is noncyclic. Now fix $P\in\mathrm{Syl}_2(L)$ and let $\overline P:=PM/M$. Then
\[
P\cap M\in\mathrm{Syl}_2(M),\quad \overline P\in\mathrm{Syl}_2(L/M).
\]
Let $\overline R$ be the largest odd-order subgroup of the abelian group $L/M$. Then $L/M=\overline R\times\overline P$. The group $\overline R$ has odd order, while $\overline P$ is a noncyclic $2$-group. Hence both admit complete mappings. By Lemma~\ref{lem:exact-factorization}, $L/M$ admits a complete mapping stabilizing $\overline P$.

By Lemma~\ref{lem:block-SL}, $M$ admits a complete mapping stabilizing $P\cap M$. Therefore Lemma~\ref{lem:normal-extension} yields a complete mapping of $L$ stabilizing $P$.
\end{proof}

We now consider the case where the binary expansion of $n$ has at least three terms.

\begin{pro}\label{prop:general-r-ge-3}
If $r\geq3$, then $\mathsf{CM}(n)$ holds.
\end{pro}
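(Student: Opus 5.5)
Choose a decomposition $V=E_1\oplus\cdots\oplus E_r$ with $\dim E_i=d_i$, where $n=d_1+\cdots+d_r$ is the binary expansion. Put $V_i:=E_1\oplus\cdots\oplus E_i$ and consider the flag
\[
\mathcal F':0=V_0<V_1<\cdots<V_r=V,
\]
which refines the trivial flag $\mathcal F:0<V$, whose stabilizer is $G$. Let $P_{\mathcal F'}:=G_{\mathcal F'}$ and let $\widehat L$ be the Levi subgroup of Lemma~\ref{lem:flag-Levi} for this decomposition, with $L:=\pi(\widehat L)$. Since $r\geq3$ and the smallest summand $d_r=2^{\nu_2(n)}$ divides every $d_i$, Lemma~\ref{lem:block-Levi} applies with $t=r$. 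It gives a complete mapping $\alpha$ of $L$ stabilizing some $P\in\mathrm{Syl}_2(L)$.

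Next, extend $\alpha$ from $L$ to $G_{\mathcal F'}$. Write $U_{\mathcal F'}:=\pi(\widehat U_{\mathcal F'})$. Lemma~\ref{lem:flag-Levi} gives $\widehat P=\widehat U_{\mathcal F'}\rtimes\widehat L$. The group $\widehat U_{\mathcal F'}$ is unipotent, so it meets $Z$ trivially, and hence $G_{\mathcal F'}=U_{\mathcal F'}L$ is an exact factorization. Because $q$ is odd, $|U_{\mathcal F'}|$ is a power of $q$ and therefore odd. So $U_{\mathcal F'}$ has a complete mapping, and Lemma~\ref{lem:exact-factorization} extends $\alpha$ to a complete mapping of $G_{\mathcal F'}$ that agrees with $\alpha$ on $L$ and so stabilizes $P$. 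Since $[G_{\mathcal F'}:L]=|U_{\mathcal F'}|$ is odd, $P\in\mathrm{Syl}_2(G_{\mathcal F'})$.

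Finally, lift from $G_{\mathcal F'}$ to $G$ with Corollary~\ref{cor:flag-refinement-lifting}, which requires $[G:G_{\mathcal F'}]$ to be odd. This index is the number of flags of type $(d_1,\ldots,d_r)$, namely the $q$-multinomial coefficient
\[
\prod_{i=1}^{r-1}\binom{d_i+\cdots+d_r}{d_i}_q .
\]
For each factor, $d_i$ and $d_{i+1}+\cdots+d_r$ use disjoint binary digits, because the $d_j$ are distinct powers of $2$. So the binary addition has no carries, and Lemma~\ref{lem:qbinom-parity}(2) shows that each factor is odd. To match the single-step form of Proposition~\ref{prop:one-step-flag-lifting}, insert the subspaces in the order $V_{r-1},V_{r-2},\ldots,V_1$, so that each step splits the space $V_{i+1}$ into pieces of dimensions $d_1+\cdots+d_i$ and $d_{i+1}$. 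These again have disjoint binary digits, so every step index is odd; alternatively, Corollary~\ref{cor:flag-refinement-lifting} only needs the total index to be odd. The result is a complete mapping of $G$ stabilizing $P\in\mathrm{Syl}_2(G)$, which proves $\mathsf{CM}(n)$.

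The main point to check is the hypothesis of Lemma~\ref{lem:block-Levi}: that $2^{\nu_2(n)}$ divides every $d_i$. This holds because $\nu_2(n)$ is the exponent of the smallest binary digit $d_r$. The other point is the oddness of the index, which is pure binary digit bookkeeping. All remaining steps are direct applications of results already stated.
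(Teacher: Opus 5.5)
Your proposal is correct and matches the paper's proof essentially step for step. You use the same flag built from the binary expansion, apply Lemma~\ref{lem:block-Levi} to the Levi factor, extend across the odd-order unipotent radical via Lemma~\ref{lem:exact-factorization}, and show the $q$-multinomial index is odd by Lemma~\ref{lem:qbinom-parity}(2) before applying Corollary~\ref{cor:flag-refinement-lifting}. Your explicit insertion order with odd step indices is a harmless extra; as you note, the corollary only needs the total index to be odd.
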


\begin{proof}
Choose a decomposition $V=E_1\oplus\cdots\oplus E_r$, $\dim E_i=d_i$, and set $V_j:=E_1\oplus\cdots\oplus E_j$, $1\leq j\leq r-1$. Consider the flag
\[
\mathcal F: 0<V_1<\cdots<V_{r-1}<V,
\]
and let $Q:=G_{\mathcal F}$. By Lemma~\ref{lem:flag-Levi}, we may write $Q=U\rtimes L$, where $|U|=q^{\sum_{1\leq i<j\leq r}d_id_j}$, since the $(i,j)$-block has $q^{d_id_j}$ choices for each $i<j$. Thus $U$ has odd order. Moreover,
\[
L=\pi\left(\left\{\mathrm{diag}(A_1,\ldots,A_r):A_i\in\GL(E_i),\prod_{i=1}^r\det A_i=1\right\}\right).
\]

Since $d_r$ is the smallest term in the binary expansion of $n$, $d_r=2^{\nu_2(n)}$, and $d_r\mid d_i$ for every $i$. Hence the hypotheses of Lemma~\ref{lem:block-Levi} are satisfied. Therefore $L$ admits a complete mapping stabilizing any prescribed Sylow $2$-subgroup. Fix $P\in\mathrm{Syl}_2(L)$, and let $\alpha$ be such a complete mapping of $L$ stabilizing $P$. Since $Q=U\rtimes L$ and $|U|$ is odd, $Q=UL$ is an exact factorization and $U$ admits a complete mapping. Hence Lemma \ref{lem:exact-factorization} extends $\alpha$ to a complete mapping $\theta_Q$ of $Q$. Moreover, $[Q:L]=|U|$ is odd, so $P\in\mathrm{Syl}_2(Q)$, and $\theta_Q(P)=\alpha(P)=P$.

It remains to lift this complete mapping from $Q$ to $G$. Let $\mathcal F_0:0<V$ be the trivial flag. The flag $\mathcal F$ is obtained from $\mathcal F_0$ by inserting subspaces $V_1,\ldots,V_{r-1}$. Since $\SL(V)$ acts transitively on the flags having the same dimensions as $\mathcal F$, and $Z$ fixes every subspace of $V$, the induced action of $G$ on such flags is transitive. Since $Q=G_{\mathcal F}$, the orbit--stabilizer theorem, together with successive choices of the subspaces in such a flag, gives
\[
[G:Q]=\prod_{k=1}^{r-1}\binom{d_k+\cdots+d_r}{d_k}_q.
\]
For each $1\leq k\leq r-1$, the integers $d_k,d_{k+1},\ldots,d_r$ are distinct powers of $2$. Hence the binary addition of $d_k$ and $d_{k+1}+\cdots+d_r$ involves no carries. Therefore Lemma \ref{lem:qbinom-parity}-(2) shows that $\binom{d_k+\cdots+d_r}{d_k}_q$ is odd for every $k$. Thus $[G:Q]$ is odd.

Applying Corollary~\ref{cor:flag-refinement-lifting} to the refinement $\mathcal F$ of $\mathcal F_0$, the complete mapping $\theta_Q$ extends to a complete mapping $\theta$ of $G$ stabilizing $P\in\mathrm{Syl}_2(G)$. Hence $\mathsf{CM}(n)$ holds.
\end{proof}

It remains to consider the case $r=2$. We first establish the complete mappings needed to apply Proposition \ref{pro:relative-equal-block-lifting}.

\begin{lem}\label{lem:three-block-K}
Let
\[
V=E_1\oplus E_2\oplus F,\quad\dim E_1=\dim E_2=m,\quad\dim F=s,
\]
and let $\widehat L_0$, $L_0$, $K$, and $w$ be as defined in Subsection \ref{subsec:equal-block-lifting}. Suppose that $m$ and $s$ are powers of $2$ and that $s\mid m$. Then $L_0$ admits a complete mapping, and $K$ admits a complete mapping stabilizing any prescribed Sylow $2$-subgroup.
\end{lem}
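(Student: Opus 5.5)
The plan mirrors Lemma~\ref{lem:power-two-K}, with Lemma~\ref{lem:block-SL} and Lemma~\ref{lem:block-Levi} supplying the block pieces. Let $\widehat M:=\{\diag(A,B,C):A,B\in\SL_m(q),\ C\in\SL_s(q)\}$ and $M:=\pi(\widehat M)$. By Lemma~\ref{lem:block-SL}, applied to $V=E_1\oplus E_2\oplus F$ (all dimensions powers of $2$; if $s=1$ the $F$-block is trivial), $M$ admits a complete mapping stabilizing any prescribed Sylow $2$-subgroup.

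For $L_0$: this is exactly the group $L$ of Lemma~\ref{lem:block-Levi} with $t=3$ and blocks $E_1,E_2,F$. Its hypothesis needs $2^{\nu_2(n)}\mid e_i$ with $n=2m+s$. Since $s\mid m$, we have $\nu_2(n)=\nu_2(s)$, so $2^{\nu_2(n)}=s$ divides $m$ and $s$. Hence $L_0$ admits a complete mapping, even one stabilizing a Sylow $2$-subgroup. The proof of that lemma also gives $L_0/M\cong D/\delta(Z)$, where $D=\{(a_1,a_2,a_3):a_1a_2a_3=1\}\cong(\F_q^\times)^2$ and $|\delta(Z)|$ is odd. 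I will reuse this description.

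For $K$: $\widehat w$ normalizes $\widehat M$, swapping the $A$ and $B$ blocks. So $M\trianglelefteq K$ and $K/M$ is an extension of $L_0/M$ by $C_2$, generated by $wM$ with $w^2\in M$ (since $\widehat w^2=\diag(-I_m,-I_m,I_s)\in\widehat M$). Conjugation by $w$ acts on $D$ by $(a_1,a_2,a_3)\mapsto(a_2,a_1,a_3)$, so $K/M\cong(D/\delta(Z))\rtimes C_2$. Fix $P\in\mathrm{Syl}_2(K)$ and set $\overline P=PM/M$. Since $\nu_2(q-1)\geq1$ and $\delta(Z)$ has odd order, $\overline P\cap(L_0/M)\cong C_{2^{c_0}}^{\,2}$, which is noncyclic, so $\overline P$ is noncyclic. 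Let $\overline R$ be the odd part of the abelian group $L_0/M$. It is characteristic in $L_0/M$, hence normal in $K/M$. It has odd order and meets $\overline P$ trivially, and $|\overline R||\overline P|=|K/M|$. Thus $K/M=\overline R\,\overline P$ is an exact factorization. Lemma~\ref{lem:exact-factorization}, using Theorem~\ref{thm:HP} for $\overline P$ and the identity map on $\overline R$, gives a complete mapping of $K/M$ stabilizing $\overline P$. Since $P\cap M\in\mathrm{Syl}_2(M)$, Lemma~\ref{lem:block-SL} gives a complete mapping of $M$ stabilizing $P\cap M$. Lemma~\ref{lem:normal-extension} then yields a complete mapping of $K$ stabilizing $P$.

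The main thing to check is the bookkeeping for $L_0/M$ and $K/M$. I must verify that $\ker(\widehat L_0\to D/\delta(Z))$ projects exactly onto $M$, as in the proof of Lemma~\ref{lem:block-Levi}. I must also verify that the action of $w$ is well defined on the quotient. Both are routine: in the first, the kernel of $\delta$ is $\widehat M$ and $Z\leq\widehat L_0$; in the second, $\delta(Z)$ consists of triples $(\lambda^m,\lambda^m,\lambda^s)$, which the swap preserves. Noncyclicity of $\overline P$ needs no parity argument about $d$ as in the power-of-$2$ case: the rank-$2$ torus quotient already contributes $C_{2^{c_0}}^2$.
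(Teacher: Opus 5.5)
Your proposal is correct and follows essentially the same route as the paper: Lemma~\ref{lem:block-Levi} for $L_0$ via $2^{\nu_2(n)}=s$, Lemma~\ref{lem:block-SL} for $M\trianglelefteq K$, noncyclicity of $\overline P$ from $\overline P\cap(L_0/M)\cong C_{2^{c_0}}^2$, the exact factorization $K/M=\overline R\,\overline P$ with $\overline R$ the odd part of $L_0/M$, and finally Lemma~\ref{lem:normal-extension}. One harmless slip: when $m=1$ (so $s=1$, $n=3$) you have $\det(-I_1)=-1$, hence $\widehat w^{\,2}=\diag(-1,-1,1)\notin\widehat M$ and $wM$ has order $4$, so your side claim that $K/M\cong(D/\delta(Z))\rtimes C_2$ via $\langle wM\rangle$ is unjustified in that case; however, the rest of your argument only uses $[K/M:L_0/M]=2$ and the normality of $L_0/M$, exactly as the paper does.
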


\begin{proof}
Since $n=2m+s=s\left(\frac{2m}{s}+1\right)$ and $\frac m s$ is a power of $2$, $\frac{2m}{s}+1$ is odd. Hence $s=2^{\nu_2(n)}$. Thus the dimensions $m,m,s$ satisfy the hypotheses of Lemma \ref{lem:block-Levi}. Therefore $L_0$ admits a complete mapping.

Let
\[
\widehat M:=\left\{\mathrm{diag}(A_1,A_2,B):A_1\in\SL(E_1),\ A_2\in\SL(E_2),\ B\in\SL(F)\right\},\quad M:=\pi(\widehat M).
\]
Since $L_0$ normalizes each of the three factors of $M$, while conjugation by $w$ interchanges the first two factors and preserves the third, we have $M\trianglelefteq K$. By Lemma~\ref{lem:block-SL}, $M$ admits a complete mapping stabilizing any prescribed Sylow $2$-subgroup.

Now fix $P\in\mathrm{Syl}_2(K)$ and let $\overline P:=PM/M$. Then $\overline P\in\mathrm{Syl}_2(K/M)$. Moreover, $L_0\trianglelefteq K$ and $[K:L_0]=2$, so
\[
L_0/M\trianglelefteq K/M,\quad[K/M:L_0/M]=2.
\]
As in the proof of Lemma~\ref{lem:block-Levi}, the determinant homomorphism
\[
\widehat L_0\longrightarrow \left\{(a_1,a_2,a_3)\in(\F_q^\times)^3:a_1a_2a_3=1\right\},\quad \mathrm{diag}(A_1,A_2,B)\mapsto (\det A_1,\det A_2,\det B)
\]
shows that $L_0/M$ is a quotient of the abelian group $\left\{(a_1,a_2,a_3)\in(\F_q^\times)^3:a_1a_2a_3=1\right\}\cong(\F_q^\times)^2$. Hence $L_0/M$ is abelian. Moreover, as in the proof of Lemma~\ref{lem:block-Levi}, the subgroup arising from $Z$ under the determinant homomorphism has odd order. Therefore, writing $c_0:=\nu_2(q-1)$, every Sylow $2$-subgroup of $L_0/M$ is isomorphic to $C_{2^{c_0}}^2$. Since $L_0/M\trianglelefteq K/M$, $\overline P\cap(L_0/M)\in\mathrm{Syl}_2(L_0/M)$. In particular, $\overline P$ is noncyclic.

Let $\overline R$ be the largest odd-order subgroup of the abelian group $L_0/M$. Then $\overline R$ is characteristic in $L_0/M$. Since $L_0/M\trianglelefteq K/M$, it follows that $\overline R\trianglelefteq K/M$. Furthermore, $(L_0/M)/\overline R$ is a $2$-group. Since $[K/M:L_0/M]=2$, it follows that $(K/M)/\overline R$ is also a $2$-group. As $\overline P\in\mathrm{Syl}_2(K/M)$ and $\overline R$ has odd order, we obtain
\[
K/M=\overline R\,\overline P,\quad\overline R\cap\overline P=1.
\]
Thus $K/M=\overline R\,\overline P$ is an exact factorization. The group $\overline R$ has odd order, while $\overline P$ is a noncyclic $2$-group, so both admit complete mappings. Hence Lemma~\ref{lem:exact-factorization} yields a complete mapping of $K/M$ stabilizing $\overline P$.

Finally, since $M\trianglelefteq K$, $P\cap M\in\mathrm{Syl}_2(M)$. By Lemma~\ref{lem:block-SL}, $M$ admits a complete mapping stabilizing $P\cap M$. Together with the complete mapping of $K/M$ stabilizing $PM/M=\overline P$, Lemma~\ref{lem:normal-extension} yields a complete mapping of $K$ stabilizing $P$.
\end{proof}

We can now settle the case $r=2$.

\begin{pro}\label{prop:general-r-2}
If $r=2$, then $\mathsf{CM}(n)$ holds.
\end{pro}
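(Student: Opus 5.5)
With $r=2$ we have $n=d_1+d_2$, where $d_1>d_2$ are powers of $2$. Write $d_1=2m$ with $m=d_1/2$ and $s=d_2$. Then $m\geq s$, both are powers of $2$, and $s\mid m$. The single exception is $d_1=2$, $d_2=1$, that is $n=3$; there $m=s=1$ and we still have $s\mid m$.

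The case $n=3$ with $q\equiv 3\pmod 4$ is already Proposition~\ref{prop:CM3-q3}. The reason it is separate is that $[G:X]$ below involves $\binom{2}{1}_q=q+1$, and for $n=3$ the remaining steps need a finer check. So I treat general $r=2$, keeping track of where the congruence of $q$ modulo $4$ enters.

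Fix $V=E_1\oplus E_2\oplus F$ with $\dim E_i=m$ and $\dim F=s$, and keep the notation $X,L_0,K,w$ of Subsection~\ref{subsec:equal-block-lifting}. The plan has four steps.

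\textbf{Step 1.} By Lemma~\ref{lem:three-block-K}, $L_0$ admits a complete mapping, and $K$ admits a complete mapping $\alpha$ stabilizing a chosen $P\in\mathrm{Syl}_2(K)$.

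\textbf{Step 2.} Proposition~\ref{pro:relative-equal-block-lifting} extends $\alpha$ to a complete mapping $\theta_X$ of $X$ with $\theta_X|_K=\alpha$.

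\textbf{Step 3.} Pass from $X$ to the parabolic subgroup $Q:=G_{\mathcal F}$ of the flag $\mathcal F:0<E_1\oplus E_2<V$. By Lemma~\ref{lem:flag-Levi}, $Q=U_Q\rtimes X$ with $|U_Q|=q^{2ms}$ odd. Lemma~\ref{lem:exact-factorization} then extends $\theta_X$ to $\theta_Q$ on $Q$, and $[Q:X]$ is odd.

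\textbf{Step 4.} Corollary~\ref{cor:flag-refinement-lifting}, applied to the refinement $\mathcal F$ of $0<V$, extends $\theta_Q$ to $G$. This requires $[G:Q]=\binom{n}{2m}_q$ to be odd, which holds by Lemma~\ref{lem:qbinom-parity}(2) because $2m=d_1$ and $s=d_2$ are distinct powers of $2$.

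Parity is what guarantees that $P$ remains Sylow all the way up. We need
\[
[X:K]=\tfrac12\, q^{m^2}\binom{2m}{m}_q .
\]
This formula holds because $X$ induces the full action of $\SL(E_1\oplus E_2)$ (up to determinant adjustment on $F$) on unordered decompositions of $E_1\oplus E_2$, exactly as in Lemma~\ref{lem:power-two-index}. When $m\geq2$, Lemma~\ref{lem:equal-qbinom-valuation}(2) gives $\nu_2\bigl(\binom{2m}{m}_q\bigr)=1$, so $[X:K]$ is odd. Multiplying the odd indices $[G:Q][Q:X][X:K]$ shows $P\in\mathrm{Syl}_2(G)$, and $\theta(P)=\alpha(P)=P$.

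The main obstacle is $m=1$, that is $n=3$. There $[X:K]=q(q+1)/2$, which is odd exactly when $q\equiv1\pmod4$, by Lemma~\ref{lem:equal-qbinom-valuation}(1). That is precisely the remaining case, since $q\equiv3\pmod 4$ was settled earlier. So I will split on whether $m\geq2$ or ($m=1$ and $q\equiv1\pmod4$), and in each branch verify that every index in the chain $K\leq X\leq Q\leq G$ is odd.

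I also need to check that the hypotheses of Lemma~\ref{lem:three-block-K} hold when $m=s=1$. They reduce to Lemma~\ref{lem:block-Levi} with $t=3$ blocks of dimension $1$ and $2^{\nu_2(3)}=1$, so they are satisfied.
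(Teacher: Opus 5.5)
Your proposal is correct and follows the paper's proof essentially step for step. It uses Lemma~\ref{lem:three-block-K} for $K$ and $L_0$, Proposition~\ref{pro:relative-equal-block-lifting} to reach $X$, the exact factorization $Q=U\rtimes X$ with $|U|=q^{2ms}$, and a single flag refinement with $[G:Q]=\binom{d_1+d_2}{d_1}_q$ odd, and it splits on $m\geq2$ versus $m=1$ with $q\equiv1\pmod4$ exactly as the paper does. One small slip: in your second paragraph, the index involving $q+1$ is $[X:K]$, not $[G:X]=q^{2ms}\binom{n}{2m}_q$ (which is odd); you state this correctly later.
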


\begin{proof}
Write $n=d_1+d_2$, $d_1>d_2$, and put $d_1=2m$, $s:=d_2$. Since $d_1$ and $d_2$ are powers of $2$, both $m$ and $s$ are powers of $2$, and $s\mid m$.

If $m=1$, then $d_1=2$, $s=1$, and hence $n=3$. The case $q\equiv3\pmod4$ has already been settled in Proposition~\ref{prop:CM3-q3}. Thus, when $m=1$, we may assume that $q\equiv1\pmod4$.

Choose a decomposition
\[
V=E_1\oplus E_2\oplus F,\quad\dim E_1=\dim E_2=m,\quad\dim F=s,
\]
and retain the notation $X,L_0,K$ of Subsection~\ref{subsec:equal-block-lifting}. By Lemma~\ref{lem:three-block-K}, $L_0$ admits a complete mapping, and $K$ admits a complete mapping stabilizing any prescribed Sylow $2$-subgroup. Fix $P\in\mathrm{Syl}_2(K)$, and choose a complete mapping $\alpha$ of $K$ stabilizing $P$.

By the same counting argument as in the proof of Lemma \ref{lem:power-two-index}, applied to $E_1\oplus E_2$, we have
\[
[X:K]=\frac{1}{2}q^{m^2}\binom{2m}{m}_q.
\]
If $m\geq2$, Lemma~\ref{lem:equal-qbinom-valuation}-(2) gives $\nu_2\left(\binom{2m}{m}_q\right)=1$, and hence $[X:K]$ is odd. If $m=1$, then $[X:K]=\frac{q(q+1)}{2}$, which is odd since $q\equiv1\pmod4$. Thus $[X:K]$ is odd in all cases under consideration, and therefore $P\in\mathrm{Syl}_2(X)$.

By Proposition~\ref{pro:relative-equal-block-lifting}, the complete mapping $\alpha$ of $K$ extends to a complete mapping $\theta_X$ of $X$. Since $\theta_X|_K=\alpha$, we have $\theta_X(P)=P$. Set $W:=E_1\oplus E_2$ and let $Q:=G_W$.

With respect to the decomposition $V=W\oplus F$, Lemma \ref{lem:flag-Levi} gives $Q=U\rtimes X$, where $X$ is precisely the subgroup defined in Subsection~\ref{subsec:equal-block-lifting}, and $|U|=q^{2ms}$. Hence $U$ has odd order and therefore admits a complete mapping. Since $Q=U\rtimes X$, the product $Q=UX$ is an exact factorization. Thus Lemma~\ref{lem:exact-factorization} extends the complete mapping $\theta_X$ of $X$ to a complete mapping $\theta_Q$ of $Q$. Moreover, $[Q:X]=|U|$ is odd. Since $P\in\mathrm{Syl}_2(X)$, it follows that $P\in\mathrm{Syl}_2(Q)$. Also, since $\theta_Q|_X=\theta_X$, $\theta_Q(P)=\theta_X(P)=P$.

Finally, since $G$ acts transitively on the $d_1$-dimensional subspaces of $V$ and $Q=G_W$, the orbit--stabilizer theorem gives $[G:Q]=\binom{d_1+d_2}{d_1}_q$. Since $d_1$ and $d_2$ are distinct powers of $2$, Lemma \ref{lem:qbinom-parity}-(2) shows that $[G:Q]$ is odd. Applying Proposition~\ref{prop:one-step-flag-lifting} to the refinement $0<V$ to $0<W<V$, the complete mapping $\theta_Q$ extends to a complete mapping $\theta$ of $G$. Since $P\in\mathrm{Syl}_2(Q)$ and $[G:Q]$ is odd, we have $P\in\mathrm{Syl}_2(G)$. Moreover, $\theta(P)=\theta_Q(P)=P$. Hence $\mathsf{CM}(n)$ holds.
\end{proof}

Combining Proposition~\ref{pro:power-two}, Proposition \ref{prop:general-r-ge-3}, and Proposition \ref{prop:general-r-2}, we conclude that $\mathsf{CM}(n)$ holds for every $n\geq2$.

\section{Over finite fields of even characteristic}\label{sec:even}

In this section, assume that $n\geq2$ and that $q$ is even. We retain the notation $G=\SL_n(q)/Z$, $Z\leq Z(\SL_n(q))$, and let $\pi:\SL_n(q)\longrightarrow G$ be the natural quotient map.

\begin{pro}\label{prop:even-characteristic}
Assume that $n\geq2$, $q$ is even, and $(n,q)\neq(2,2)$. Then $G$ admits a complete mapping stabilizing a Sylow $2$-subgroup.
\end{pro}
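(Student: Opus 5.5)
The plan is to work inside the image $B:=\pi(\widehat B)$ of the upper triangular Borel subgroup, which is the stabilizer in $G$ of a complete flag. Since $q$ is even, $|Z|$ divides $\gcd(n,q-1)$, which is odd. Also, $\widehat B=\widehat U\rtimes\widehat T^{+}$, where $\widehat U$ is a Sylow $2$-subgroup of $\SL_n(q)$ and $|\widehat T^{+}|=(q-1)^{n-1}$ is odd.

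First, $Z\leq\widehat T^{+}$ and $\widehat U\cap Z=1$. Hence $B=U\rtimes T$ with $U:=\pi(\widehat U)\cong\widehat U$ and $T:=\pi(\widehat T^{+})$ of odd order. Moreover, $U$ is a Sylow $2$-subgroup of $G$, because $[G:B]=[\SL_n(q):\widehat B]$ is the number of complete flags, a product of Gaussian binomial coefficients. By Lemma~\ref{lem:qbinom-parity}(1) this number is odd. Alternatively, one can apply Corollary~\ref{cor:flag-refinement-lifting} to the refinement of $0<V$ to a complete flag, which requires only that the total index be odd.

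Second, I will produce a complete mapping of $B$ stabilizing $U$. The factorization $B=TU$ is exact and $T$ has odd order, so $T$ admits the identity as a complete mapping. By Lemma~\ref{lem:exact-factorization}, any complete mapping $\alpha$ of $U$ then extends to a complete mapping of $B$ that agrees with $\alpha$ on $U$, and so stabilizes $U$. It remains to know that $U$ admits a complete mapping. By Theorem~\ref{thm:HP}, this holds exactly when $U$ is noncyclic. Since $(n,q)\neq(2,2)$, I argue in two cases. If $n=2$, then $U\cong(\F_q,+)$ with $q\geq4$, which is elementary abelian of rank at least $2$. If $n\geq3$, then $U$ contains the root subgroups for the positions $(1,2)$ and $(1,3)$. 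These commute and generate a subgroup isomorphic to $(\F_q,+)^2$, so $U$ is noncyclic. (Alternatively, a cyclic Sylow $2$-subgroup of order greater than $2$ is impossible here because every nontrivial element of $U$ that lies in a root subgroup is an involution, while $|U|>2$.)

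Third, I lift from $B$ to $G$. The complete flag stabilized by $B$ is obtained from the trivial flag $0<V$ by inserting finitely many subspaces. So Corollary~\ref{cor:flag-refinement-lifting} extends the complete mapping of $B$ to one of $G=G_{0<V}$. Since $[G:B]$ is odd, the extension still stabilizes $U\in\mathrm{Syl}_2(G)$.

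I expect the main obstacle to be already handled by the earlier machinery. The only points needing care are the checks that $Z\cap\widehat U=1$ and $|T|$ odd, so that $U$ maps isomorphically and the factorization is exact, together with the noncyclicity of $U$ in the edge cases $n=2$, $q\geq4$ and $n\geq3$, $q=2$. In the case $n\geq3$, $q=2$, the two commuting root elements give $C_2^2$, which settles it.
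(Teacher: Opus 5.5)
Your proposal is correct and follows the paper's proof essentially step for step. You use the Borel image $B=U\rtimes T$ with $T$ of odd order, show $U$ is noncyclic via root subgroups, and apply Lemma~\ref{lem:exact-factorization} to get a complete mapping of $B$ stabilizing $U$; you then get $[G:B]$ odd from Lemma~\ref{lem:qbinom-parity}(1) and lift with Corollary~\ref{cor:flag-refinement-lifting}. Your split on $n=2$ versus $n\geq3$, instead of the paper's $q>2$ versus $q=2$, is immaterial. Only your parenthetical alternative is loosely phrased: a cyclic $U$ is ruled out by exhibiting two distinct involutions, since a cyclic $2$-group has only one, not merely by $|U|>2$ — though your root subgroups do supply two.
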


\begin{proof}
Let $\mathcal F:0<V_1<\cdots<V_{n-1}<V$ be the complete flag whose stabilizer in $\SL(V)$ is $\widehat B=\widehat U\rtimes\widehat T^{+}$, where $\widehat B$, $\widehat U$, and $\widehat T^{+}$ are as in Definition~\ref{def:Borel}. Let $B:=G_{\mathcal F}$. Then $B=\pi(\widehat B)$. Set $U:=\pi(\widehat U)$ and $T:=\pi(\widehat T^{+})$. Since $Z\leq\widehat T^{+}$, the decomposition $\widehat B=\widehat U\rtimes\widehat T^{+}$ induces $B=U\rtimes T$. Moreover, $|U|=q^{\frac{n(n-1)}{2}}$, so $U$ is a $2$-group since $q$ is a power of $2$. On the other hand, $|\widehat T^{+}|=(q-1)^{n-1}$ and $q-1$ is odd. Hence $T=\pi(\widehat T^{+})$ has odd order. Therefore $U\in\mathrm{Syl}_2(B)$.

We claim that $U$ is noncyclic. If $q>2$, then $\{\pi(I_n+aE_{12}):a\in\F_q\}\cong(\F_q,+)$ is readily seen to be a subgroup of $U$. Since $q>2$ is a power of $2$, this is a noncyclic elementary abelian $2$-group. Hence $U$ is noncyclic. If $q=2$, then the assumption $(n,q)\neq(2,2)$ implies that $n\geq3$, and $\langle\pi(I_n+E_{12}),\pi(I_n+E_{13})\rangle\cong C_2^2$ is readily seen to be a subgroup of $U$. Hence $U$ is again noncyclic. Thus $U$ is noncyclic in all cases under consideration. By Theorem \ref{thm:HP}, $U$ admits a complete mapping. Since $T$ has odd order, $T$ admits a complete mapping.

Since $B=TU$ and $T\cap U=1$, the product $B=TU$ is an exact factorization. Hence Lemma~\ref{lem:exact-factorization} extends a complete mapping of $U$ to a complete mapping $\theta_B$ of $B$ such that $\theta_B(U)=U$.

It remains to lift $\theta_B$ from $B$ to $G$. Since $\SL(V)$ acts transitively on the complete flags of $V$ and $Z$ fixes every subspace of $V$, the induced action of $G$ on the complete flags is transitive. Since $B=G_{\mathcal F}$, the orbit--stabilizer theorem, together with successive choices of the subspaces in a complete flag, gives
\[
[G:B]=\prod_{k=1}^{n-1}\binom{n-k+1}{1}_q.
\]
Since $q$ is even, Lemma \ref{lem:qbinom-parity}-(1) shows that each factor in this product is odd. Therefore $[G:B]$ is odd.

The complete flag $\mathcal F$ is obtained from the trivial flag $0<V$ by inserting finitely many subspaces. Therefore Corollary~\ref{cor:flag-refinement-lifting} extends $\theta_B$ to a complete mapping $\theta$ of $G$. Since $U\in\mathrm{Syl}_2(B)$, $[G:B]$ is odd, and $\theta_B(U)=U$, the same corollary gives $U\in\mathrm{Syl}_2(G)$ and $\theta(U)=U$.
\end{proof}

The remaining case is exceptional.

\begin{rem}\label{rem:SL2-2}
If $(n,q)=(2,2)$, then $Z=1$ and $G=\SL_2(2)\cong S_3$. Its Sylow $2$-subgroups are cyclic of order $2$, so by Theorem \ref{thm:HP}, $G$ admits no complete mapping.
\end{rem}

\section{Summary and concluding remarks}\label{sec:8}

We are now ready to prove Theorem \ref{thm:main}.

\begin{proof}
The odd-characteristic case follows from Propositions~\ref{pro:power-two}, \ref{prop:general-r-ge-3}, and \ref{prop:general-r-2}. The even-characteristic case with $(n,q)\neq(2,2)$ follows from Proposition~\ref{prop:even-characteristic}, while the exceptional case $(n,q)=(2,2)$ is excluded by Remark~\ref{rem:SL2-2}.
\end{proof}

We conclude by recalling the connection between complete mappings and difference matrices. A $(G,k,1)$-\emph{difference matrix} (DM) is a $k\times |G|$ matrix $D=(d_{ij})$ with entries in a group $G$ such that for any two distinct row indices $i_1$ and $i_2$, the multiset of differences $\{d_{i_1j} d_{i_2j}^{-1}\mid 1\leq j\leq |G|\}$ contains each element of $G$ exactly once. In particular, a $(G,3,1)$-difference matrix exists if and only if $G$ admits a complete mapping. Recursive constructions for difference matrices were developed systematically by Buratti \cite{Buratti98}. In particular, \cite[Corollary~2.6]{Buratti98} shows that if $N\unlhd G$ and both $N$ and $G/N$ admit difference matrices with $k$ rows, then so does $G$. Thus, Lemma~\ref{lem:normal-extension} can be viewed as
a refinement of the $k=3$ case of Buratti's construction, with
the additional requirement that the resulting complete mapping
stabilize a Sylow $2$-subgroup.

A natural direction for further research is to extend our results to the remaining finite simple groups of Lie type and to the sporadic simple groups appearing in Theorem~\ref{cor:main}.
For the remaining classical groups, the flag-stabilizer and decomposition methods developed here suggest analogous constructions involving totally isotropic or totally singular flags and decompositions compatible with the underlying forms. The main challenge is to identify suitable subgroups of odd
index while retaining control of a Sylow $2$-subgroup.

\section*{Conflict of interest}

The authors declare no competing interests.

\section*{Data availability}

No datasets were generated or analysed during the current study.

\end{document}